\begin{document}

\newtheorem{theorem}{Theorem}
\newtheorem{lemma}[theorem]{Lemma}
\newtheorem{claim}[theorem]{Claim}
\newtheorem{cor}[theorem]{Corollary}
\newtheorem{prop}[theorem]{Proposition}
\newtheorem{definition}{Definition}
\newtheorem{question}[theorem]{Question}
\newtheorem{remark}[theorem]{Remark}
\newcommand{\hh}{{{\mathrm h}}}

\numberwithin{equation}{section}
\numberwithin{theorem}{section}
\numberwithin{table}{section}

\def\sssum{\mathop{\sum\!\sum\!\sum}}
\def\ssum{\mathop{\sum\ldots \sum}}
\def\iint{\mathop{\int\ldots \int}}

\def\squareforqed{\hbox{\rlap{$\sqcap$}$\sqcup$}}
\def\qed{\ifmmode\squareforqed\else{\unskip\nobreak\hfil
\penalty50\hskip1em\null\nobreak\hfil\squareforqed
\parfillskip=0pt\finalhyphendemerits=0\endgraf}\fi}

\newfont{\teneufm}{eufm10}
\newfont{\seveneufm}{eufm7}
\newfont{\fiveeufm}{eufm5}
%
%
\newfam\eufmfam
     \textfont\eufmfam=\teneufm
\scriptfont\eufmfam=\seveneufm
     \scriptscriptfont\eufmfam=\fiveeufm
%
%
\def\frak#1{{\fam\eufmfam\relax#1}}

\newcommand{\bflambda}{{\boldsymbol{\lambda}}}
\newcommand{\bfmu}{{\boldsymbol{\mu}}}
\newcommand{\bfxi}{{\boldsymbol{\xi}}}
\newcommand{\bfrho}{{\boldsymbol{\rho}}}

\newcommand{\bfalpha}{{\boldsymbol{\alpha}}}
\newcommand{\bfbeta}{{\boldsymbol{\beta}}}
\newcommand{\bfphi}{{\boldsymbol{\rho}}}
\newcommand{\bfpsi}{{\boldsymbol{\psi}}}
\newcommand{\bftheta}{{\boldsymbol{\vartheta}}}

\def\fK{Frak K}
\def\fT{Frak{T}}

\def\fA{{Frak A}}
\def\fB{{Frak B}}
\def\fC{\mathfrak{C}}

\def \balpha{\bm{\alpha}}
\def \bbeta{\bm{\beta}}
\def \bgamma{\bm{\gamma}}
\def \blambda{\bm{\lambda}}
\def \bchi{\bm{\chi}}
\def \bphi{\bm{\rho}}
\def \bpsi{\bm{\psi}}

\def\eqref#1{(\ref{#1})}

\def\vec#1{\mathbf{#1}}


\def\cA{{\mathcal A}}
\def\cB{{\mathcal B}}
\def\cC{{\mathcal C}}
\def\cD{{\mathcal D}}
\def\cE{{\mathcal E}}
\def\cF{{\mathcal F}}
\def\cG{{\mathcal G}}
\def\cH{{\mathcal H}}
\def\cI{{\mathcal I}}
\def\cJ{{\mathcal J}}
\def\cK{{\mathcal K}}
\def\cL{{\mathcal L}}
\def\cM{{\mathcal M}}
\def\cN{{\mathcal N}}
\def\cO{{\mathcal O}}
\def\cP{{\mathcal P}}
\def\cQ{{\mathcal Q}}
\def\cR{{\mathcal R}}
\def\cS{{\mathcal S}}
\def\cT{{\mathcal T}}
\def\cU{{\mathcal U}}
\def\cV{{\mathcal V}}
\def\cW{{\mathcal W}}
\def\cX{{\mathcal X}}
\def\cY{{\mathcal Y}}
\def\cZ{{\mathcal Z}}
\newcommand{\rmod}[1]{\: \text{mod} \: #1}

\def\cg{{\mathcal g}}

\def\vr{\mathbf r}

\def\e{{\mathbf{\,e}}}
\def\ep{{\mathbf{\,e}}_p}
\def\em{{\mathbf{\,e}}_m}

\def\Tr{{\mathrm{Tr}}}
\def\Nm{{\mathrm{Nm}\,}}

 \def\SS{{\mathbf{S}}}

\def\lcm{{\mathrm{lcm}}}
\def\ord{{\mathrm{ord}}}

\def\({\left(}
\def\){\right)}
\def\fl#1{\left\lfloor#1\right\rfloor}
\def\rf#1{\left\lceil#1\right\rceil}

\def\mand{\qquad \text{and} \qquad}

\newcommand{\commM}[1]{\marginpar{%
\begin{color}{red}
\vskip-\baselineskip 
\raggedright\footnotesize
\itshape\hrule \smallskip M: #1\par\smallskip\hrule\end{color}}}

\newcommand{\commI}[1]{\marginpar{%
\begin{color}{magenta}
\vskip-\baselineskip 
\raggedright\footnotesize
\itshape\hrule \smallskip I: #1\par\smallskip\hrule\end{color}}}

\newcommand{\commK}[1]{\marginpar{%
\begin{color}{blue}
\vskip-\baselineskip 
\raggedright\footnotesize
\itshape\hrule \smallskip K: #1\par\smallskip\hrule\end{color}}}




\hyphenation{re-pub-lished}

\mathsurround=1pt

\def\bfdefault{b}
\overfullrule=5pt

\def \F{{\mathbb F}}
\def \K{{\mathbb K}}
\def \N{{\mathbb N}}
\def \Z{{\mathbb Z}}
\def \Q{{\mathbb Q}}
\def \R{{\mathbb R}}
\def \C{{\mathbb C}}
\def\Fp{\F_p}
\def \fp{\mathfrak p}
\def \fq{\mathfrak q}

\def\ZK{\Z_K}

\def \xbar{\overline x}
\def\e{{\mathbf{\,e}}}
\def\ep{{\mathbf{\,e}}_p}
\def\eq{{\mathbf{\,e}}_q}


\title[Sum of a prime in arithmetic progression and square-free]{Representation of an integer as the sum of a prime in arithmetic progression and a square-free integer}

\date{\today}

\author[K. H. Yau]{Kam Hung Yau}

\address{Department of Pure Mathematics, University of New South Wales,
Sydney, NSW 2052, Australia}
\email{kamhungyau.math@gmail.com}

\begin{abstract}
Uniformly for small $q$ and $(a,q)=1$, we obtain an estimate for the weighted number of ways a sufficiently large integer can be represented as the sum of a prime congruent to $a$ modulo $q$ and a square-free integer.  Our method is based on the notion of local model developed by Ramar\'e and may be viewed as an abstract circle method.
\end{abstract}

\keywords{binary additive problem, Circle method, local model, Ramanujan sum}
\subjclass[2010]{11P32, 11P55, 11T23}

\maketitle

\section{Introduction}

The Goldbach conjecture states that every even integer greater than two can be expressed as the sum of two primes. Although this remains an open problem due to the parity phenomenon, there are various progress and relaxations which contributes toward this direction.

Mordern sieve method can be traced back to the earlier works of Brun. In 1920, Brun~\cite{B1} showed that every sufficiently large even integer can be written as the sum of two numbers which have together at most nine prime divisors. Later, the celebrated result of Chen~\cite{C1} established that every sufficiently large even integer can be written as the sum of a prime and a number with at most two prime factors.  

Initiated  by Linnik~\cite{L2} in 1953, he showed that every sufficiently large even integer can be written as a sum of two primes and at most $K$ powers of two, where $K$ is an absolute constant although non-explicit. Many authors had made $K$ explicit where the best result $K=12$ is due to Liu \& L\"u~\cite{LL1}, improving the remarkable result $K=13$ by Heath-Brown \& Puchta~\cite{HBP1}.

Another relaxation is the ternary Goldbach conjecture which states that all odd integer greater than five is the sum  of three primes. Vinogradov~\cite{V1} developed a way to estimate sums over primes which combined with the circle method showed  the ternary Goldbach conjecture is true for all large odd integer greater than $C>0$. Recently, Helfgott~\cite{H1}  completed the proof of ternary Goldbach conjecture by sufficiently reducing the size $C$ and verified that no counterexample exists below $C$. 

We also have results when we replace one of the primes  in the Goldbach conjecture by a square-free integer. Estermann~\cite{E1} obtained an asymptotic formula for the number of representation of a sufficently large integer as the sum of a prime and a square-free number. Later, Page~\cite{P1} improved on the error term of Estermann~\cite{E1} and Mirsky~\cite{M1} improved and extended these results to count the number of representions of an integer as the sum of a prime and a $k$-free number. Recently, Dudek~\cite{D2} by tools of explicit number theory demonstrated that every integer greater than two can be a sum of a prime and a square-free integer.

In this paper we are motivated by a question posed in the PhD thesis of Dudek~\cite[Chapter 6, Problem 8]{D1}. Specifically Dudek asked for $(a,q)=1$, can all sufficiently large integer without local obstruction be a sum of a prime $p$ such that $p \equiv a \hspace{-1mm} \pmod{q}$ and a square-free integer. A similar problem on the number of representations of a sufficiently large integer to be the sum of two square-free integer has been studied by Evelyn \& Linfoot~\cite{EL} and later simplified by Estermann~\cite{E2}. The best result is achieved by Br\"udern \& Perelli~\cite{BP}.

There are mainly two advanced techniques for attacking certain binary additive problems: sieve method~\cite{FI1} and the dispersion method of Linnik~\cite{L1}.
We note that the circle method~\cite{V2} has certain difficulity in dealing with binary additive problems.
Our method applied here is due to Ramar\'e~\cite{R1} on his notion of local model, and can be viewed as an abstract circle method. We remark that Heath-Brown~\cite{HB1} had already notice this connection for his alternate prove of Vinogradov's three prime theorem~\cite{V1}.

Lastly the techniques used here may be adapted for various other binary additive problems. In particular, the author expects that it should be possible to prove an asymptotic bound for the number of representation of an integer as the sum of a square-free integer and a prime $p$ such that $p+1$ is square-free.

\newpage

\section{Notation}

The statements $U=O(V)$ and $U \ll V$ are both equivalent to $|U| \le c|V|$ for some fixed positive constant $c$. If $c$ depends on a parameter, say $a$, then we may write $U=O_a(V)$ or $U \ll_a V$.

For completeness, we recall the following standard notation in analytic number theory.
\begin{align*}
a \mid b & : \mbox{$a$ divides $b$.} \\
a \nmid b  & : \mbox{$a$ does not divide $b$.} \\
p & : \mbox{with or without subscript is exclusively a prime number.} \\
p^{\alpha} \mid\!\mid n & : \mbox{means $p^{\alpha} \mid n$ but $p^{\alpha+1} \nmid n$.}  \\
(a_1, \ldots, a_n) & : \mbox{the greatest common divisor of $a_1, \ldots, a_n$.} \\
[b_1, \ldots, b_n] & : \mbox{the lowest common multiple of $b_1, \ldots, b_n$.}  \\
\sigma(n) & : \mbox{the sum of all positive divisors of $n$.} \\
\varphi(n) & : \mbox{the number of integers in $[1,n]$ coprime to $n$.} \\
\mu & : \mbox{the M\"{o}bius function.} \\
\Lambda & : \mbox{the von Mangoldt function.} \\
a  \equiv b [k] &: \mbox{means $a \equiv b \hspace{-3mm} \pmod k$.} \\
\mathbf{e}_r(t) & : \mbox{is $\exp(2 \pi i t/r)$.} \\
\displaystyle \sideset{}{^*} \sum_{a=1}^{n} & : \mbox{a summation over the integers in $[1,n]$ coprime to $n$.} \\
c_r(k) &: \mbox{the Ramanujan sum $\displaystyle \sideset{}{^*} \sum_{\substack{a=1} }^{r} \mathbf{e}_r(ka)$.} \\
\mathbbm{1}_{S} & : \mbox{is $1$ if $S$ is true and $0$ otherwise.} \\
\mathcal{L} & : \mbox{is $\log N$.}
\end{align*}

\newpage

\section{Main result}
We denote
$$
\mathcal{R}_{a,q}(N) = \sum_{ \substack{N = p+n \\ p \equiv a [q] }} \mu^2(n)  \log p 
$$
to be the weighted number of representations for $N$ as the sum of a  prime congruent to $a$ modulo $q$ and a square-free integer.

We now state a bound for $\mathcal{R}_{a,q}(N)$ which is uniform for small $q$.

\begin{theorem} \label{T1}
Let $C_1,C_2>0$ then we have
\begin{equation} \label{eqn: thm eqn}
\mathcal{R}_{a,q}(N) = \mathfrak{S}_{a,q}(N) N \left  \{ 1 + O_{C_1,C_2}  \left (\mathcal{L}^{-C_2} \right  )   \right \}
\end{equation}
uniformly for $(a,q)=1$ and $q \le \mathcal{L}^{C_1}$. The \textit{singular series} is given by
\begin{align*}
 \mathfrak{S}_{a,q}(N) & = \frac{6}{\varphi(q) \pi^2} \prod_{\substack{p=2 \\ (p,q)=1 }}^{\infty}  \left (1+ \frac{c_p(N)}{(p^2-1)(p-1)} \right ) \prod_{\substack{p \mid\!\mid q }}  \left (1 - \frac{c_p(N-a)}{p^2-1} \right )  \\
&\quad  \times \prod_{p^2 \mid q} \left  (1 - \frac{c_p(N-a) + c_{p^2}(N-a) }{p^2-1} \right ).
\end{align*}
\end{theorem}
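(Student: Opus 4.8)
The plan is to read $\mathcal{R}_{a,q}(N)$ as a correlation between the Chebyshev weight on the progression $a\bmod q$ and the square-free indicator $(\mu^2(n))_{n\le N}$, and to evaluate it by replacing $(\mu^2(n))$ with its projection onto Ramar\'e's local model, i.e. the finite-dimensional space encoding the behaviour of $\mu^2$ modulo the small prime powers; the projection is what produces the singular series, and the orthogonal complement is the error term. Concretely this amounts to the identity $\mu^2(n)=\sum_{d^2\mid n}\mu(d)$, which gives
\begin{equation*}
\mathcal{R}_{a,q}(N)=\sum_{d\le\sqrt N}\mu(d)\sum_{\substack{p\le N\\ p\equiv a\,[q]\\ p\equiv N\,[d^2]}}\log p ,
\end{equation*}
so that each inner sum is a Chebyshev sum over a single progression to modulus $[q,d^2]$ whenever the two congruences are compatible, and is empty otherwise. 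The local data of $\mu^2$ at a prime $p$ is precisely the condition $p^2\nmid n$, and the corresponding local densities will assemble into $\mathfrak{S}_{a,q}(N)$.

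First I would fix a cut-off $z=\mathcal{L}^{B}$ with $B=C_2+B_0$ for a suitable absolute $B_0$ and split at $d\le z$. In the main range $d\le z$ one has $[q,d^2]\le\mathcal{L}^{C_1+2B}$, so the Siegel--Walfisz theorem gives, for each such $d$,
\begin{equation*}
\sum_{\substack{p\le N\\ p\equiv a\,[q],\ p\equiv N\,[d^2]}}\log p=\frac{N}{\varphi([q,d^2])}\,\mathbbm{1}_{\mathrm{compat}}(d)+O_{C_1,C_2}\!\left(N\,e^{-c\sqrt{\mathcal{L}}}\right),
\end{equation*}
where $\mathbbm{1}_{\mathrm{compat}}(d)$ records both solvability of the combined congruence and coprimality of the resulting residue class to $[q,d^2]$; summed over $d\le z$ this error is negligible, and its implied constant depends on $C_1,C_2$ through the admissible modulus range, hence through Siegel's ineffective bound. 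For the tail $z<d\le\sqrt N$, Brun--Titchmarsh yields $\sum_{p\le N,\,p\equiv N[d^2]}\log p\ll N\mathcal{L}/\varphi(d^2)$ when $d^2\le N/2$, while the larger $d$ contribute $O(\sqrt N\,\mathcal{L})$ since the progression modulo $d^2$ then meets $[1,N]$ in $O(1)$ terms; using $\varphi(d^2)=d\varphi(d)$ and $\sum_{d>z}(d\varphi(d))^{-1}\ll \mathcal{L}^{-B}\log\log\mathcal{L}$ the whole tail is $O(N\mathcal{L}^{-C_2})$ once $B_0$ is large enough.

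It then remains to identify $\sum_{d=1}^{\infty}\mu(d)\,\varphi([q,d^2])^{-1}\,\mathbbm{1}_{\mathrm{compat}}(d)$ with $\mathfrak{S}_{a,q}(N)$. Writing $d=d_0d_1$ with $(d_0,q)=1$ and $d_1\mid q^{\infty}$, the summand factors over primes, and one computes each local factor separately: at $p\nmid q$ the constraint is $p^2\nmid N-p$ read modulo $p^2$, which after the Ramanujan-sum evaluation $c_p(k)=\mu(p/(p,k))\varphi(p)/\varphi(p/(p,k))$ produces $1+c_p(N)/((p^2-1)(p-1))$; at $p\mid\!\mid q$ the progression fixes $p\bmod p$ but leaves $p\bmod p^2$ free, so the square-free condition at $p$ bites on the class $N-a\bmod p^2$ and gives the factor with $c_p(N-a)$; and at $p^2\mid q$ the progression fixes $p\bmod p^2$, which is why both $c_p(N-a)$ and $c_{p^2}(N-a)$ enter. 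Assembling the Euler factors, pulling out $\varphi(q)^{-1}$, and normalising by $6/\pi^2=\prod_p(1-p^{-2})$ gives exactly the stated $\mathfrak{S}_{a,q}(N)$; one also checks $\mathfrak{S}_{a,q}(N)\asymp\varphi(q)^{-1}$ uniformly, so that the relative error $O_{C_1,C_2}(\mathcal{L}^{-C_2})$ in \eqref{eqn: thm eqn} is genuine.

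The main obstacle is the uniformity in $q$: one must apply the prime number theorem in progressions with the modulus $[q,d^2]$ a genuine (growing) power of $\mathcal{L}$, which forces the ineffective Siegel--Walfisz theorem and makes the implied constant depend on $C_1,C_2$ while leaving the shape $\mathcal{L}^{-C_2}$ of the saving to the choice of $z$; and one must track $\mathbbm{1}_{\mathrm{compat}}(d)$ carefully when $(d,q)>1$ so that the Euler-product bookkeeping at the ramified primes $p\mid q$ — the source of the two non-standard products in $\mathfrak{S}_{a,q}(N)$ — is not corrupted by spurious contributions. A secondary point is making sure the Brun--Titchmarsh and convergence estimates in the tail are uniform as $q$ ranges up to $\mathcal{L}^{C_1}$.
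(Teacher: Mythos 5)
Your argument is essentially correct, but it is a genuinely different proof from the one in the paper. The paper never expands $\mu^2$ by the convolution identity: it runs Ramar\'e's local-model machinery, building for each modulus $q\in\mathcal{Q}$ local approximants $\vartheta_q^*$ and $\rho_q^*$ to $\mu^2(N-\cdot)$ and $\Lambda(\cdot)\mathbbm{1}_{\cdot\equiv a[q]}$, combining them into an almost orthogonal system $(\phi_q^*,\psi_q^*)$, and recovering $[f|g]$ by a Bessel/Cauchy--Schwarz argument, with the singular series emerging as the completed sum $\sum_q[\rho_q^*|\vartheta_q^*]_q$ of local Hermitian products. Your route --- $\mu^2(n)=\sum_{d^2\mid n}\mu(d)$, Siegel--Walfisz for the moduli $[q,d^2]\le\mathcal{L}^{O(1)}$, Brun--Titchmarsh plus trivial counting for the tail, then an Euler-product evaluation --- is the classical Estermann/Mirsky/Dudek approach, and is in fact alluded to in the paper's remark after the theorem (where it is invoked under GRH to sharpen the error). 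Your approach is shorter and more elementary; the paper's approach is chosen to showcase the local-model (abstract circle method) framework, which also applies to binary problems where neither summand admits such a clean convolution expansion. Your Euler-product bookkeeping does check out: with compatibility meaning $(d^2,q)\mid(N-a)$ and $(N,d)=1$, one has $\varphi([q,d^2])=\varphi(q)\prod_{p\mid d,\,p\nmid q}p(p-1)\prod_{p\mid d,\,p\mid\!\mid q}p$, and the resulting product $\frac{1}{\varphi(q)}\prod_{p\nmid qN}\bigl(1-\frac{1}{p(p-1)}\bigr)\prod_{p\mid\!\mid q,\,p\mid N-a}\bigl(1-\frac{1}{p}\bigr)$, which vanishes exactly when some $p^2\mid(q,N-a)$, agrees with the stated $\mathfrak{S}_{a,q}(N)$.

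Two small repairs are needed. First, the cut-off must depend on $C_1$ as well as $C_2$: since $\mathfrak{S}_{a,q}(N)N$ can be as small as roughly $N\mathcal{L}^{-C_1-1}$, your tail bound of order $N\mathcal{L}^{1-B}$ must beat $\mathfrak{S}_{a,q}(N)N\mathcal{L}^{-C_2}$, so take $z=\mathcal{L}^{B}$ with $B=C_1+C_2+B_0$ rather than an absolute $B_0$ added to $C_2$ alone (this mirrors the paper's choice $D_1=C_1+C_2+2$). Second, the claim $\mathfrak{S}_{a,q}(N)\asymp\varphi(q)^{-1}$ holds only up to a $\log\log q$ factor (harmless since $q\le\mathcal{L}^{C_1}$) and fails in the degenerate case $p^2\mid(q,N-a)$; that case must be split off, where $\mathcal{R}_{a,q}(N)=0=\mathfrak{S}_{a,q}(N)$ and the theorem is trivially true, exactly as in the paper's treatment at the end of its proof.
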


The implied constant in the reminder term is ineffective as the Siegel-Walfisz Theorem is ineffective.
In view of Lemma~\ref{lem: ramanujan sum property}, we can rewrite the \textit{singular series} in a more rudimentary form
\begin{align*}
& \mathfrak{S}_{a,q}(N) \\
&  =  \mathbbm{1}_{\substack{\forall p, p^2 \nmid (q ,N-a)}}  \cdot \frac{    6}{\varphi(q) \pi^2} \prod_{\substack{  p \mid N \\ p \nmid q  }} \left (1 + \frac{1}{p^2-1} \right ) \prod_{\substack{p=2 \\ p\nmid Nq  }}^{\infty}  \left (1 - \frac{1}{(p^2-1)(p-1)} \right ) \\
& \quad \times  \prod_{\substack{p \mid\!\mid q \\ p \mid (N-a) }} \left (1 - \frac{1}{p+1} \right ) \prod_{\substack{p \mid\!\mid q \\ p \nmid (N-a) }} \left (1 + \frac{1}{p^2-1} \right )  \prod_{\substack{p^2|q  \\ p \nmid (N-a) }} \left ( 1 + \frac{1}{p^2-1}\right ) \\
& \quad \times \prod_{\substack{p^2|q \\ p \mid\!\mid (N-a) }} \left (1 + \frac{1}{p^2-1} \right ).
\end{align*}

If $p_1^2 \mid q$ and $p_1^2 \mid (N-a)$ then it follows for all primes $p \equiv a [q]$ that $N-p$ is never square-free and hence $\mathcal{R}_{a,q}(N)=0$. This coincide exactly to the case when $\mathfrak{S}_{a,q}(N)$ vanishes.

Using the well-known convolution identity $\mu^2(n) = \sum_{d^2|n} \mu(d)$ and assuming the GRH (Generalised Riemann Hypothesis), the error term in ~\eqref{eqn: thm eqn} can be replaced by $O_{\varepsilon}(N^{3/4+\varepsilon})$. Indeed on the GRH, we obtain
$$
N \sum_{\substack{a = 1 \\ (a,N)=1}}^{\infty} \frac{\mu(a)}{\varphi([a^2,q])} \cdot \mathbbm{1}_{(a^2,q) \mid (N-a)} + O_{\varepsilon}  \left (N \sum_{a > w} \frac{1}{a^2}  + N^{1/2+\varepsilon} w\right ).
$$
Taking $w = N^{1/4}$ gives the result. It seems the error term cannot be further improved (assuming the GRH) using the local model method.

Observe that when we take $q=1$ in Theorem~\ref{T1}, we obtain a special case of Mirsky~\cite{M1} (after weighing but with a weaker error term). Indeed our \textit{singular series} simplifies to
\begin{align*}
\mathfrak{S}_{0,1}(N) &  = \prod_{p=2}^{\infty} \left  (\frac{p^2-1}{p^2} \right ) \prod_{\substack{p=2 \\ p \mid N}}^{\infty} \left  (1 +\frac{1}{p^2-1} \right ) \prod_{\substack{p=2 \\ p \nmid N}}^{\infty} \left (1 -\frac{1}{(p^2-1)(p-1)} \right ) \\
& = \prod_{\substack{p=2 \\ p \nmid N}}^{\infty}  \left ( \frac{p^2-1}{p^2} - \frac{1}{p^2 (p-1)}   \right )\\
&  = \prod_{\substack{p=2 \\ p \nmid N}}^{\infty} \left  ( 1 - \frac{1}{p(p-1)} \right ).
\end{align*}

\section{Outline of Method}
For a more thorough exposition, see~\cite[Chapter 1,4 \&17]{R1}. Our method will model that of~\cite[Chapter 19]{R1} where therein Ramar\'e proved an asymptotic bound for the number of representations of a sufficiently large integer as a sum of two square-free integer.

We press forward and recall the definition of \textit{local} and \textit{global} product, see~\eqref{eq: local product} and~\eqref{eq: global product} respectively.

Set $\mathcal{H}$ as an \textit{almost orthogonal system} by the following collection of information:
\begin{enumerate}[label=(\roman*)]
\item a finite family $(\phi_i^*)_{i \in I}$ of elements of $\mathcal{H}$, 
\item a finite family $(M_i)_{i \in I}$ of positive real numbers,
\end{enumerate}
and
$$
\left  \lVert \sum_{i \in I} \xi_i \phi_i^*  \right  \rVert^2 \le   \sum_{i \in I }  M_i |\xi_i|^2 
$$
for all $(\xi_i)_{i \in I} \in \mathbb{C}^I$.


The special case of choosing an orthogonal basis is enlightening. If $(\phi_i^*)_{i \in I}$ were orthogonal then we may take $M_i = \lVert \phi_i^* \rVert^2$.


Let $\mathscr{F}(\mathbb{N})$ be the vector space of all complex valued functions on the positive integers and
\begin{equation} \label{eq: global product}
[f|g] = \sum_{n  \le N} f(n) \overline{g(n)}
\end{equation} 
 be the usual scalar product (\textit{global} Hermitian product) for all $f,g \in \mathscr{F}(\mathbb{N})$.

Let $(a,q)=1$, $f(n)= \Lambda(n) \mathbbm{1}_{n \equiv a [q]}$ and $g(n) =  \mu^2(N-n)$. Observe that 
\begin{align*}
[f|g] &= \sum_{n \le N } \Lambda(n)  \mathbbm{1}_{n \equiv a [q]} \cdot  \mu^2(N-n)\\
&= \sum_{ \substack{N = n_1 + n_2\\ n_1 \equiv a [q]}} \Lambda(n_1)  \mu^2(n_2)
\end{align*}
is the weighted number of representations for $N$ to be a sum of a prime congruent to $a$ modulo $q$ and a square-free integer. Our ultimate goal is to compute $[f|g]$ and we shall use the notion of local model to this end.

Indeed let $\mathcal{Q} \subseteq \mathbb{N}$ be a carefully chosen set of moduli, we construct two local models $( \rho_q^* )_{q \in \mathcal{Q}}$ and $( \gamma_q^* )_{q \in \mathcal{Q}}$ to approximate $f$ and $g$ respectively, and in some sense they are made to copy the distribution of $f$ and $g$ in arithmetic progression respectively. 

Next we take $(\frac{1}{2} \Delta_q^* )_{q \in \mathcal{Q}}$ to be essentially the union of some linear combination of $(  \rho_q^* )_{q \in \mathcal{Q}}$ and $(   \gamma_q^* )_{q \in \mathcal{Q}}$, this will be the local model accountable for both $f$ and $g$. Furthermore for all $q_1, q_2 \in \mathcal{Q}$, we set $M_{q_1} = \sum_{t \in \mathcal{Q} } |[\Delta_{t}^* | \Delta_{q_1}^*]|$, see~\cite[Lemma 1.1]{R1}. This gives rise to an \textit{almost orthogonal system} and in particular will imply the scalar product
$$
\left [f  - \frac{1}{2}   \sum_{q \in \mathcal{Q} }  \xi_q(f)  \Delta_q^*  \Bigg |g - \frac{1}{2} \sum_{q \in \mathcal{Q} } \xi_q(g) \Delta_q^*  \right ]
$$
is small in a suitable sense. The construction will also secure $[\Delta_{q_1}^*|\Delta_{q_2}^*]$ to be small when $q_1 \neq q_2$.
Expanding the inner product, we have
\begin{align*}
 \Bigg [f  - \frac{1}{2}   \sum_{q \in \mathcal{Q} }  \xi_q(f) \Delta_q^*  \Bigg |  g  - &  \frac{1}{2} \sum_{q \in \mathcal{Q} } \xi_q(g) \Delta_q^*  \Bigg ]\\
& = [f|g] - \frac{1}{2}\sum_{q \in \mathcal{Q}} \xi_q(f) [\Delta_q^*|g] - \frac{1}{2} \sum_{q \in \mathcal{Q}} \xi_q(g)[f|\Delta_q^*] \\
& \quad  + \frac{1}{4} \sum_{q_1,q_2 \in \mathcal{Q} } \xi_{q_1}(f) \xi_{q_2}(g) [\Delta^*_{q_1}|\Delta^*_{q_2}].
\end{align*}

Here we take $\xi_q(f) = [f|\Delta_q^*]/M_q$ and $\xi_q(g) = [g|\Delta_q^*]/M_q$ as motivated by the orthogonal case. Simplifying gives
$$
[f|g] = \sum_{q \in \mathcal{Q} }  \frac{ [f|\Delta_q^*] [\Delta_q^*|g]}{M_q} +O(R).
$$
The error term can be shown to be sufficiently small by appealing to the local model. The summand in the sum above can then be replaced by a tractable expression for which we can compute explicitly and the result soon follows.

\section{Preparations}

\subsection{Number theoretical considerations}
We record here various number theoretical lemmas needed in subsequent sections. For completeness we will also include several straightforward lemmas that may be applied freely without reference.

First we recall the well-known orthogonality of exponential sums~\cite[Equation 4.1]{MV1}.
\begin{lemma}
For any positive integer $k$, we have
$$
\sum_{a=1}^{k}  \mathbf{e}_k(ar) =
\begin{cases}
k &\mbox{if $r \equiv 0 [k]$,} \\
0 & \mbox{otherwise.}
\end{cases}
$$
\end{lemma}

Next we recall the Chinese remainder theorem for arbitrary modulus~\cite[Theorem 3.12]{JJ1}.
\begin{lemma}[Chinese remainder theorem]
Let $a_i, m_i \in \mathbb{N}$ for $i=1, \ldots, n$ and $L=[m_1, \ldots, m_n]$. The following system of congruences $x \equiv a_i [m_i]$ for $i=1, \ldots, n$ is solvable if and only if $a_i \equiv a_j [(m_i,m_j)]$ for any $1 \le i,j \le n$. If the system is solvable then $x \equiv \sigma [L]$ for some $1 \le \sigma \le L $and any two such $x$ are congruent modulo $L$. Moreover, $(a_i,m_i)=1$ for $1 \le i \le n$ if and only if $(\sigma,L)=1$.
\end{lemma}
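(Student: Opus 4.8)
The plan is to deduce the general statement from the classical Chinese remainder theorem for pairwise coprime moduli by localizing at each prime. For a prime $p$ and each $i$, write $p^{e_i}\mid\!\mid m_i$ (so $e_i = e_i(p) = 0$ when $p\nmid m_i$); since $m_i$ is the product of the pairwise coprime prime powers $p^{e_i}$, the congruence $x\equiv a_i\,[m_i]$ is, by the coprime case, equivalent to the family $x\equiv a_i\,[p^{e_i}]$ taken over all primes $p$. Hence the system of the lemma is equivalent to the collection, indexed by primes $p$, of the local systems $\{x\equiv a_i\,[p^{e_i}]:1\le i\le n\}$, and all the gcd/lcm bookkeeping reduces to $\min$ and $\max$ of the exponents $e_i(p)$ for each fixed $p$.

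For necessity, if $x$ solves every congruence then $(m_i,m_j)$ divides $(x-a_i)-(x-a_j)=a_j-a_i$, so $a_i\equiv a_j\,[(m_i,m_j)]$ for all $i,j$. For sufficiency, fix $p$ and pick an index $j=j(p)$ with $e_j$ maximal. As the $p$-part of $(m_i,m_j)$ is $p^{\min(e_i,e_j)}=p^{e_i}$, the hypothesis $a_i\equiv a_j\,[(m_i,m_j)]$ gives $a_i\equiv a_j\,[p^{e_i}]$ for every $i$, so $x\equiv a_j\,[p^{e_j}]$ solves the local system at $p$, and any two solutions of that local system agree modulo $p^{e_j}=p^{\max_i e_i(p)}$. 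Splicing the local solutions together by the coprime Chinese remainder theorem yields an $x$ solving the whole system, unique modulo $\prod_p p^{\max_i e_i(p)} = L$; choosing its representative in $[1,L]$ gives the claimed $\sigma$, with any two solutions congruent modulo $L$.

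For the coprimality clause I would again argue primewise: a prime $p$ divides both $\sigma$ and $L$ if and only if $p\mid\sigma$ and $p\mid m_i$ for some $i$, and since $\sigma\equiv a_i\,[m_i]$ forces $\sigma\equiv a_i\,[p]$ whenever $p\mid m_i$, this happens if and only if $p\mid(a_i,m_i)$ for some $i$. Therefore $(\sigma,L)=1$ if and only if $(a_i,m_i)=1$ for all $i$.

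The only point requiring care is the reduction itself, namely the observation that after fixing a single prime the compatibility conditions collapse to comparing each $a_i$ against the single $a_{j(p)}$ of maximal exponent; once that is in place, everything else is the classical coprime CRT plus routine bookkeeping. An alternative, which I would mention only as a remark, is a direct induction on $n$ that avoids localization but must invoke the distributive identity $([m_1,\dots,m_{n-1}],m_n)=[(m_1,m_n),\dots,(m_{n-1},m_n)]$ to reduce the inductive step to the two-modulus case.
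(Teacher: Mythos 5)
Your proof is correct. Note that the paper itself does not prove this lemma at all: it is quoted with a citation to Jones--Jones (Theorem 3.12), where the standard treatment runs by induction on $n$, reducing to the two-modulus case via the distributive identity $([m_1,\dots,m_{n-1}],m_n)=[(m_1,m_n),\dots,(m_{n-1},m_n)]$ --- exactly the alternative you relegate to a remark. Your main argument instead localizes at each prime $p$: writing $p^{e_i}\mid\!\mid m_i$, the compatibility hypothesis applied to the index $j(p)$ of maximal exponent shows the local system collapses to the single congruence $x\equiv a_{j(p)}\,[p^{\max_i e_i(p)}]$, and splicing these by the coprime CRT gives existence and uniqueness modulo $\prod_p p^{\max_i e_i(p)}=L$; the final primewise argument for $(\sigma,L)=1\Leftrightarrow(a_i,m_i)=1$ for all $i$ is also sound. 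This localization route buys a transparent explanation of why only pairwise compatibility (indeed only comparison with the maximal exponent at each prime) is needed, at the cost of invoking unique factorization, whereas the inductive route is more self-contained arithmetic on gcd/lcm identities; either is acceptable, and your write-up fills a gap the paper leaves to the literature.
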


We recall from~\cite[Theorem 4.1]{MV1} some fundamental properties of Ramanujan sums.

\begin{lemma} \label{lem: ramanujan sum property}
For any positive integers $n$ and $r$, the Ramanujan sum $c_r(n)$ is a multiplicative function of $r$. Moreover we have
$$
c_r(n) = \sum_{d \mid (r,n)} d \mu \left (\frac{r}{d} \right ) = \frac{\mu(r/(r,n))}{\varphi(r/(r,n))} \varphi(r),
$$
and hence $|c_r(n)| = \varphi((r,n))$. In particular
$$
c_p(n) =
\begin{cases}
-1 & \mbox{if $p \nmid n$,} \\
\varphi(p)  & \mbox{if $p\mid n$,}
\end{cases}
\mbox{\hspace{2mm} and \hspace{2mm}  }
c_{p^2}(n) = 
\begin{cases}
0 & \mbox{if $p \nmid n$,} \\
-p & \mbox{if $p \mid\!\mid  n$,} \\
\varphi(p^2) & \mbox{if $p^2 \mid n$.}
\end{cases}
$$

\end{lemma}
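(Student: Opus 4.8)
The plan is to establish all of the assertions directly from the definition $c_r(n)=\sideset{}{^*}\sum_{a=1}^{r}\mathbf{e}_r(na)$, using only the orthogonality relation and the Chinese remainder theorem already recorded. I would carry out four steps in order: (i) multiplicativity of the map $r\mapsto c_r(n)$; (ii) the divisor-sum identity $\sum_{d\mid r}c_d(n)=r\,\mathbbm{1}_{r\mid n}$ followed by M\"obius inversion, which delivers the first displayed formula; (iii) reduction to prime powers to pass from the first formula to the second; and (iv) reading off the value of $|c_r(n)|$ and the tabulated special cases $r=p$ and $r=p^2$ as corollaries.

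For step (i), if $r=r_1r_2$ with $(r_1,r_2)=1$, I would parametrise residues modulo $r$ by $a\equiv a_1r_2+a_2r_1\pmod r$ with each $a_i$ running over a complete residue system modulo $r_i$; then $a$ runs over a complete system modulo $r$, one has $(a,r)=1$ precisely when $(a_1,r_1)=(a_2,r_2)=1$ (since $(r_2,r_1)=1$), and $\mathbf{e}_r(na)=\mathbf{e}_{r_1}(na_1)\,\mathbf{e}_{r_2}(na_2)$ because $na/r\equiv na_1/r_1+na_2/r_2\pmod 1$. Restricting to reduced residues then factorises the sum as $c_r(n)=c_{r_1}(n)\,c_{r_2}(n)$. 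For step (ii), I would sort the complete exponential sum $\sum_{a=1}^{r}\mathbf{e}_r(na)$ according to $g=(a,r)$, write $a=ga'$ with $(a',r/g)=1$, and note $\mathbf{e}_r(na)=\mathbf{e}_{r/g}(na')$, so that $\sum_{a=1}^{r}\mathbf{e}_r(na)=\sum_{g\mid r}c_{r/g}(n)=\sum_{d\mid r}c_d(n)$; by the orthogonality lemma the left-hand side is $r$ if $r\mid n$ and $0$ otherwise, whence M\"obius inversion over the divisors of $r$ yields $c_r(n)=\sum_{d\mid r}\mu(r/d)\,d\,\mathbbm{1}_{d\mid n}=\sum_{d\mid(r,n)}d\,\mu(r/d)$.

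For step (iii), I would observe that both sides of the identity $c_r(n)=\mu(r/(r,n))\varphi(r)/\varphi(r/(r,n))$ are multiplicative in $r$ --- the left side by step (i), the right side by a routine check --- so it suffices to verify it for $r=p^k$. Writing $(p^k,n)=p^j$ with $j=\min(k,e)$ where $e$ is the exponent of $p$ in $n$, the first formula gives $c_{p^k}(n)=\sum_{i=0}^{j}p^i\mu(p^{k-i})$, in which only the terms $i=k$ and $i=k-1$ can survive; hence this sum equals $\varphi(p^k)$ when $j=k$, equals $-p^{k-1}$ when $j=k-1$, and vanishes when $j\le k-2$, and a direct evaluation of $\mu(p^{k-j})\varphi(p^k)/\varphi(p^{k-j})$ returns precisely these three values. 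Step (iv) is then pure bookkeeping: the second formula shows $c_r(n)=0$ unless $r/(r,n)$ is square-free, in which case the value of $|c_r(n)|$ drops out at once, and substituting $r=p$ and $r=p^2$ into the two formulas reproduces the tabulated cases. I do not anticipate a genuine obstacle here --- the argument is formal manipulation of the definition throughout --- and the only place that calls for a little care is the three-way case split in step (iii), governed by the relative sizes of $k$ and $e$.
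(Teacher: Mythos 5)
The paper does not prove this lemma at all --- it is quoted directly from Montgomery--Vaughan \cite[Theorem 4.1]{MV1} --- so your argument is to be judged on its own, and steps (i)--(iii) are exactly the standard textbook proof: multiplicativity via the Chinese remainder theorem, the identity $\sum_{d\mid r}c_d(n)=r\,\mathbbm{1}_{r\mid n}$ plus M\"obius inversion for the first formula, and a prime-power verification for the second. Your three-way case split at $r=p^k$ is carried out correctly, and it reproduces the tabulated values for $r=p$ and $r=p^2$.

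The one genuine problem is in step (iv), where you say the value $|c_r(n)|=\varphi((r,n))$ ``drops out at once.'' It does not: from the second formula one gets $|c_r(n)|=\varphi(r)/\varphi\bigl(r/(r,n)\bigr)$ when $r/(r,n)$ is squarefree, and to convert this into $\varphi((r,n))$ you need $\varphi(r)=\varphi((r,n))\,\varphi\bigl(r/(r,n)\bigr)$, i.e.\ coprimality of $(r,n)$ and $r/(r,n)$. That fails in general, and in fact the clause as stated in the lemma is false: for $r=p^2$ and $p \mid\!\mid n$ the lemma's own table gives $|c_{p^2}(n)|=p$, while $\varphi((p^2,n))=\varphi(p)=p-1$; likewise $c_{p^2}(n)=0$ but $\varphi((p^2,n))=1$ when $p\nmid n$. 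So this is partly a defect of the statement being proved (what is true, and what the paper actually uses, is the equality for squarefree $r$, where $(r,n)$ and $r/(r,n)$ are automatically coprime, together with the trivial bound $|c_r(n)|\le\varphi(r)$); but a correct writeup must either restrict the equality to that case or record the inequality, rather than assert that the general identity follows ``at once'' --- as written, that sentence papers over a step that cannot be completed.
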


The next result provides an explicit expression for detecting equality for divisors~\cite[Corollary 3.1]{R1}.

\begin{lemma} \label{lem: if d=a}
For integer $a \ge 1$ and any divisor $d$ of $a$, we have
$$
 \sum_{\substack{k \mid a \\ d \mid k}} \mu(k/d) = \mathbbm{1}_{d=a}. 
$$
\end{lemma}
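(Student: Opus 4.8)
The plan is to reduce the claimed identity to the fundamental property of the M\"obius function, namely $\sum_{m \mid n} \mu(m) = \mathbbm{1}_{n=1}$ for every positive integer $n$. First I would use the hypothesis $d \mid a$ to reindex the sum: every $k$ with $d \mid k$ and $k \mid a$ can be written uniquely as $k = dm$, and as $k$ ranges over such divisors, $m$ ranges precisely over the positive divisors of $a/d$ (this is exactly where $d \mid a$ is needed — it ensures the multiples of $d$ dividing $a$ are precisely the $dm$ with $m \mid a/d$). Under this bijection $\mu(k/d) = \mu(m)$, so the left-hand side equals $\sum_{m \mid a/d} \mu(m)$.

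Then I would invoke the standard identity $\sum_{m \mid n}\mu(m) = \mathbbm{1}_{n=1}$ with $n = a/d$, giving $\sum_{m \mid a/d}\mu(m) = \mathbbm{1}_{a/d = 1} = \mathbbm{1}_{d=a}$, which is the assertion. If one wishes to be self-contained, the identity $\sum_{m\mid n}\mu(m)=\mathbbm{1}_{n=1}$ itself follows from the multiplicativity of $n \mapsto \sum_{m\mid n}\mu(m)$ together with the prime-power evaluation $\sum_{0 \le j}\mu(p^{j}) = 1 + (-1) = 0$ for $n=p^{\,j}$ with $j \ge 1$, and the trivial value $1$ at $n=1$.

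There is essentially no obstacle here; the lemma is a routine consequence of M\"obius inversion, and the only point requiring a moment's care is the legitimacy of the substitution $k \mapsto k/d$, which the divisibility hypothesis $d \mid a$ makes immediate.
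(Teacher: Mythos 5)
Your proof is correct: the substitution $k=dm$ with $m\mid a/d$ (valid precisely because $d\mid a$) reduces the sum to $\sum_{m\mid a/d}\mu(m)=\mathbbm{1}_{a/d=1}$, which is the claim. The paper itself gives no proof of this lemma, quoting it from Ramar\'e's book as a known result, and your reindexing argument is exactly the standard one-line proof of that statement, so there is nothing to add.
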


We record below a sensational result which gives an estimate for the number of primes in an arithmetic progression for small moduli~\cite[Corollary 5.29]{IK1}.

\begin{lemma}[Siegel-Walfisz] \label{lem: siegel-walfisz}
For any $A,B>0$, we have
$$
\sum_{\substack{n \le N \\ n \equiv a [q] }} \Lambda(n) = \frac{N}{\varphi(q)} +O_{A,B }(N\mathcal{L}^{-B})
$$
uniformly for $(a,q)=1$ and $q \le \mathcal{L}^A$.
\end{lemma}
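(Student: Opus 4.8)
The plan is to convert the count of prime powers in the progression $n \equiv a\,[q]$ into a sum over Dirichlet characters and then to estimate each character sum by means of the truncated explicit formula, the classical zero-free region, and Siegel's theorem on exceptional zeros. The small restriction $q \le \mathcal{L}^A$ is exactly what allows the errors coming from the nontrivial zeros to beat any fixed power of $\mathcal{L}$.

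First, by orthogonality of the Dirichlet characters modulo $q$,
$$
\sum_{\substack{n \le N \\ n \equiv a\,[q]}} \Lambda(n) = \frac{1}{\varphi(q)} \sum_{\chi \bmod q} \overline{\chi}(a)\, \psi(N,\chi), \qquad \psi(N,\chi) := \sum_{n \le N} \Lambda(n)\chi(n).
$$
The principal character $\chi_0$ contributes $\varphi(q)^{-1}\psi(N,\chi_0)$; since $\psi(N,\chi_0)$ differs from $\psi(N)=\sum_{n\le N}\Lambda(n)$ only by the prime powers sharing a factor with $q$, a quantity which is $O(\mathcal{L}^2)$, and since the Prime Number Theorem with the classical error term gives $\psi(N) = N + O\!\big(N\exp(-c\sqrt{\mathcal{L}})\big)$, this produces the main term $N/\varphi(q)$ together with an admissible error.

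Second, I would treat the non-principal characters. If $\chi$ is non-principal modulo $q$, induced by a primitive character $\chi^*$ modulo $q^*$ for some $q^*\mid q$ with $q^*>1$, then $\psi(N,\chi) = \psi(N,\chi^*) + O(\mathcal{L}^2)$, and for the primitive character the truncated explicit formula reads
$$
\psi(N,\chi^*) = -\sum_{|\gamma|\le T}\frac{N^{\rho}}{\rho} + O\!\left(\frac{N\mathcal{L}^2}{T}\right),
$$
the sum being over the nontrivial zeros $\rho=\beta+i\gamma$ of $L(s,\chi^*)$. The classical zero-free region supplies $c_0>0$ for which there is no zero with $\beta>1-c_0/\log\!\big(q^*(|\gamma|+2)\big)$, the only possible exception being one simple real zero $\beta_1<1$ attached to a real character; together with the zero-density bound $\#\{\rho:|\gamma|\le T\}\ll T\log(q^*T)$ this shows that the contribution of all non-exceptional zeros is
$$
\ll N\mathcal{L}^2\exp\!\left(-\frac{c_0\mathcal{L}}{\log(q^*T)}\right) + \frac{N\mathcal{L}^2}{T}.
$$
Choosing $T=\exp(\sqrt{\mathcal{L}})$ and using $q^*\le q\le\mathcal{L}^A$, so that $\log(q^*T)\ll\sqrt{\mathcal{L}}$, makes this $\ll N\exp(-c_1\sqrt{\mathcal{L}})$.

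Third --- and this is the real obstacle --- I would dispose of a possible exceptional zero $\beta_1$, whose contribution to $\psi(N,\chi^*)$ is $\ll N^{\beta_1}/\beta_1\ll N\exp\!\big(-(1-\beta_1)\mathcal{L}\big)$. The classical region alone is too weak here; instead Siegel's theorem furnishes, for every $\varepsilon>0$, an \emph{ineffective} constant $c(\varepsilon)>0$ with $1-\beta_1 > c(\varepsilon)(q^*)^{-\varepsilon}$. Taking $\varepsilon=1/(2A)$ and $q^*\le\mathcal{L}^A$ gives $(q^*)^{-\varepsilon}\ge\mathcal{L}^{-1/2}$, so the exceptional contribution is $\ll N\exp\!\big(-c(\varepsilon)\sqrt{\mathcal{L}}\big)$. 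Summing over the $\varphi(q)\le\mathcal{L}^A$ characters and absorbing the harmless factor $\mathcal{L}^A$ against $\exp(-c\sqrt{\mathcal{L}})$ --- which decays faster than any power of $\mathcal{L}$ --- yields the bound $N/\varphi(q) + O_{A,B}(N\mathcal{L}^{-B})$ for every $B>0$. The appeal to Siegel's theorem is precisely the source of the ineffectivity noted after Theorem~\ref{T1}.
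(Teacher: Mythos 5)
Your argument is correct and is exactly the standard proof of the Siegel--Walfisz theorem: orthogonality of characters, the truncated explicit formula with the classical zero-free region for the non-exceptional zeros, and Siegel's (ineffective) lower bound $1-\beta_1 \gg_{\varepsilon} (q^*)^{-\varepsilon}$ to handle a possible exceptional zero, with the restriction $q \le \mathcal{L}^A$ making all error terms $\ll N\exp(-c\sqrt{\mathcal{L}}) \ll N\mathcal{L}^{-B}$. The paper does not prove this lemma at all --- it cites it as \cite[Corollary 5.29]{IK1} --- and your sketch is essentially the proof found there, including the correct identification of Siegel's theorem as the source of the ineffectivity remarked upon after Theorem~\ref{T1}.
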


Finally we state an auxiliary lemma that we will need later.

\begin{lemma} \label{lem: double mobuis sum}
For all cubefree positive integers $m$ and $n$, we have
$$
 \sum_{\substack{d_1 \mid m }} \sum_{\substack{d_2 \mid n}}   \mu(m/d_1 )  \mu(n/d_2 )(d_1,d_2) =  \varphi(m) \cdot  \mathbbm{1}_{m=n}.
$$
\end{lemma}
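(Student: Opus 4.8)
The plan is to exploit multiplicativity. Both sides of the claimed identity are multiplicative in the pair $(m,n)$ in the following sense: the double sum $S(m,n) = \sum_{d_1\mid m}\sum_{d_2\mid n}\mu(m/d_1)\mu(n/d_2)(d_1,d_2)$ factors over prime powers because $\mu$ is multiplicative, the divisor conditions split by the Chinese remainder theorem, and $(d_1,d_2)$ is multiplicative jointly in $(d_1,d_2)$. Likewise $\varphi(m)\mathbbm{1}_{m=n}$ is multiplicative in $(m,n)$: the indicator $\mathbbm{1}_{m=n}$ factors as $\prod_p \mathbbm{1}_{v_p(m)=v_p(n)}$. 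Hence it suffices to verify the identity when $m = p^\alpha$ and $n = p^\beta$ with $\alpha,\beta \in \{0,1,2\}$ (cubefree forces the exponents into this range), and then multiply.

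First I would dispose of the coprime-support case: if $p\mid m$ but $p\nmid n$ (i.e. $\beta = 0$), then the inner sum over $d_2\mid 1$ contributes only $d_2 = 1$, so $S(p^\alpha,1) = \sum_{d_1\mid p^\alpha}\mu(p^\alpha/d_1) = \mathbbm{1}_{\alpha=0} = 0$, matching $\varphi(p^\alpha)\mathbbm{1}_{p^\alpha = 1} = 0$; the symmetric case is identical, and the $\alpha=\beta=0$ case is the trivial $1=1$. Then I would compute the local factor $S(p^\alpha,p^\beta)$ for the remaining cases $\alpha,\beta\in\{1,2\}$ directly. For $\alpha=\beta=1$: the only terms are $(d_1,d_2)\in\{(1,1),(1,p),(p,1),(p,p)\}$ with $\mu$-weights $(+1)(+1),(+1)(-1),(-1)(+1),(-1)(-1)$ and $\gcd$ values $1,1,1,p$, giving $1-1-1+p = p-1 = \varphi(p)$, as required since $\mathbbm{1}_{p=p}=1$. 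For $\alpha=1,\beta=2$ (and symmetrically): here $\mu(p^2/d_2)$ kills $d_2=1$, leaving $d_2\in\{p,p^2\}$ with $\mu$-weights $-1,+1$; pairing against $d_1\in\{1,p\}$ with $\mu$-weights $+1,-1$ and the appropriate gcd's $(1,p)=1,(1,p^2)=1,(p,p)=p,(p,p^2)=p$ gives $(1)(-1)(1) + (1)(1)(1) + (-1)(-1)(p) + (-1)(1)(p) = -1+1+p-p = 0 = \varphi(p)\mathbbm{1}_{p=p^2}$. For $\alpha=\beta=2$: only $d_1,d_2\in\{p,p^2\}$ survive, with weights $-1,+1$ each and gcd's $(p,p)=p,(p,p^2)=p,(p^2,p)=p,(p^2,p^2)=p^2$, yielding $p - p - p + p^2 = p^2 - p = \varphi(p^2)$, matching $\mathbbm{1}_{p^2=p^2}=1$.

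The only genuinely delicate point is the justification of multiplicativity — specifically, that $S(mm',nn') = S(m,m')\,S(n,n')$ when $(mm',nn')$ and... more precisely when $(m,m')=(n,n')=(mn,m'n')=1$ in the appropriate coordinatewise sense. This follows by writing each $d_i$ as a product over the prime powers dividing the relevant modulus, using $\mu$'s multiplicativity and the fact that $\gcd$ of a product of coprime parts is the product of the parts' gcd's; I would state this factorization as the one structural step and then let the prime-power computation above finish the proof. Everything else is a finite check.
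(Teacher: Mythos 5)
Your proposal is correct, but it takes a somewhat different route from the paper. You factor the double sum over \emph{both} variables into an Euler product of local sums $S(p^\alpha,p^\beta)$ — writing $d_1,d_2$ prime by prime and using $(d_1,d_2)=\prod_p (p^{a_p},p^{b_p})$, the multiplicativity of $\mu$ and $\varphi$, and $\mathbbm{1}_{m=n}=\prod_p \mathbbm{1}_{v_p(m)=v_p(n)}$ — and then finish with a finite table of cases $\alpha,\beta\in\{0,1,2\}$, which is all cubefreeness allows. The paper instead factors only the $n$-side: writing $n=n_1n_2^2$, the inner sums collapse to $\prod_{p\mid n_1}((d_1,p)-1)\prod_{p\mid n_2}((d_1,p^2)-(d_1,p))$, which is nonzero only when $n\mid d_1\mid m$, and the remaining M\"obius sum over $d_1$ detects $m=n$ and produces $\varphi(n)$; the paper then invokes symmetry rather tersely. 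Your version is more systematic and handles head-on the cases of equal prime support but unequal exponents (e.g.\ $m=p$, $n=p^2$), which the paper's ``a prime divides $n$ but not $m$ \dots\ by symmetry'' phrasing glosses over; the paper's version is shorter once its one-sided factorization is in place, and it dovetails with Lemma~\ref{lem: if d=a}, which it implicitly reuses. Two cosmetic slips in your write-up, neither affecting validity: in the prime-power checks the $\mu$-weights attached to $d=1$ versus $d=p$ (and to $d=p$ versus $d=p^2$) are interchanged — by the symmetry of each table the totals are unchanged — and the displayed multiplicativity relation should read $S(mm',nn')=S(m,n)\,S(m',n')$ for blocks with $(mn,m'n')=1$, not $S(m,m')\,S(n,n')$.
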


\begin{proof}
Write $n= n_1 n_2^2$ and $m=m_1 m_2^2$ where $\mu^2(n_1 n_2)= \mu^2(m_1 m_2)=1$. We factor our sum
\begin{align*}
& \mu^2(n_1) \mu^2(n_2) \sum_{d_1 \mid m}   \mu \left (\frac{m}{d_1} \right )  \sum_{d_2 \mid n_1} \mu \left( \frac{n_1}{d_2} \right )(d_1, d_2) \sum_{d_2' \mid n_2^2} \mu \left (\frac{n_2^2}{d_2'} \right ) (d_1, d_2') \\
& =  \sum_{d_1 \mid m}  \mu \left (\frac{m}{d_1} \right )   \prod_{p \mid n_1}  ((d_1,p)-1   ) \prod_{p \mid n_2} ((d_1,p^2) - (d_1,p) ).
\end{align*}
If there exist a prime factor that divides $n$ but not $m$ then the sum vanishes. By symmetry, we are left to consider the case $m=n$ and in turn the sum simplifies to
\begin{align*}
\prod_{p \mid m_1}  (  (m,p) -1 )   \prod_{p \mid m_2}(  (m, p^2) -(m,p) ) 
&  =  \prod_{p \mid m_1}(p-1) \prod_{p \mid m_2} p(p-1) \\
& = \varphi(m_1) \varphi(m_2^2) \\
& = \varphi(m).
\end{align*}
\end{proof}

\subsection{Arithmetic functions in arithmetic progression}

The following result provides an asymptotic for the number of square-free integers in an arithmetic progression. 

Note that we canonically extend the characteristics function of the square-free integers to negative integers by $\mu^2(-n)=\mu^{2}( |n| )$. We recall a result from~\cite[Lemma 19.1]{R1}.

\begin{lemma} \label{lem: sqfr in AP bound}
For any $\varepsilon >0$, we have
$$
\sum_{ \substack{ n \le N \\ n \equiv a [q]  }} \mu^2(N-n) = (N/q) \gamma_q(N-a) + O_{\varepsilon} \left (q^{\varepsilon} \sqrt{N/q} \right ).
$$
 Here
$$
\gamma_q(a) =
\begin{cases}
\displaystyle 0 & \mbox{if there exists $p^2 \mid (a,q)$,} \\[8pt]
\displaystyle \frac{6}{\pi^2} \prod_{p \mid q} \frac{p^2}{p^2-1} \prod_{\substack{p \mid\!\mid q \\ a \equiv 0 [p]}} \Big (1-\frac{1}{p} \Big ) & \mbox{otherwise.}
\end{cases}
$$
\end{lemma}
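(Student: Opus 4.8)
The plan is to count square-free integers in the progression $n \equiv a \pmod q$ in the interval $n \le N$, which after the substitution $m = N-n$ becomes counting square-free $m$ in the range $1 \le m \le N$ (up to boundary terms of size $O(1)$) with $m \equiv N-a \pmod q$. So set $b = N-a$ and estimate $\sum_{m \le N,\, m \equiv b [q]} \mu^2(m)$. First I would insert the convolution identity $\mu^2(m) = \sum_{d^2 \mid m} \mu(d)$ from the discussion after Theorem~\ref{T1}, giving
$$
\sum_{\substack{m \le N \\ m \equiv b [q]}} \mu^2(m) = \sum_{d \le \sqrt N} \mu(d) \sum_{\substack{m \le N \\ m \equiv b[q] \\ d^2 \mid m}} 1 .
$$
The inner sum counts integers up to $N$ lying in the intersection of the progression $m \equiv 0 \pmod{d^2}$ with $m \equiv b \pmod q$. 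By the Chinese remainder theorem (the version stated above), this intersection is nonempty if and only if $b \equiv 0 \pmod{(d^2,q)}$, in which case it is a single progression modulo $[d^2,q]$, so the inner sum equals $N/[d^2,q] + O(1)$ when $(d^2,q) \mid b$ and $0$ otherwise.

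Next I would sum over $d$. The main term becomes
$$
N \sum_{\substack{d \le \sqrt N \\ (d^2,q)\mid b}} \frac{\mu(d)}{[d^2,q]},
$$
and the error is $O(\sqrt N)$ from the $O(1)$ terms. I would then extend the sum over $d$ to all $d \ge 1$; the tail is $\sum_{d > \sqrt N} |\mu(d)|/[d^2,q] \ll \sum_{d>\sqrt N} 1/d^2 \ll N^{-1/2}$, contributing $O(\sqrt N)$ to the whole expression after multiplying by $N$. To get the shape with the dependence on $q$, write $[d^2,q] = d^2 q / (d^2,q)$, so the completed sum is
$$
\frac{1}{q} \sum_{\substack{d \ge 1 \\ (d^2,q)\mid b}} \frac{\mu(d)(d^2,q)}{d^2}.
$$
This Dirichlet series is multiplicative in the variable $d$ and I would evaluate it by an Euler product, splitting primes $p$ into those with $p \nmid q$ (local factor $1 - 1/p^2$), those with $p \mid\!\mid q$, and those with $p^2 \mid q$; since $\mu(d)$ kills any $d$ divisible by $p^2$, only $d$'s contribution at each prime is $1$ or $p$. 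For $p \mid\!\mid q$: the term $d = p$ has $(p^2,q) = p$, and it is admissible only if $p \mid b$; so the local factor is $1 - \tfrac 1p$ if $p \mid b$ and $1$ if $p \nmid b$ — matching Ramar\'e's formula. For $p^2 \mid q$: the term $d = p$ has $(p^2,q) = p^2$, admissible only if $p^2 \mid b$, giving $1 - 1$ when $p^2 \mid b$ (this produces $\gamma_q = 0$ for that prime), and $1$ otherwise. Packaging these, and observing that if some $p^2 \mid (b,q)$ then a $p^2 \mid q$ prime forces the whole product to vanish, gives exactly the stated piecewise definition of $\gamma_q(b) = \gamma_q(N-a)$, with main term $(N/q)\gamma_q(N-a)$.

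The only point requiring care — and the main obstacle — is tracking the error term uniformly in $q$ and getting the claimed $O_\varepsilon(q^\varepsilon \sqrt{N/q})$ rather than a cruder $O(\sqrt N)$. The improvement from $\sqrt N$ to $\sqrt{N/q}$ comes from noticing that the $O(1)$ in the inner count is only nonzero when $(d^2,q)\mid b$, and more importantly that for the progression modulo $[d^2,q]$ to meet $[1,N]$ at all one needs $[d^2,q] \le $ something, so effectively $d \ll \sqrt{N/q}$ contributes the bulk; combined with the divisor-type bound $\sum_{d} 1 \ll_\varepsilon (N/q)^{\varepsilon}\sqrt{N/q}$ on the relevant range one recovers the stated exponent. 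Since this is exactly \cite[Lemma 19.1]{R1} I would ultimately cite Ramar\'e for the sharp error bookkeeping, presenting the Euler-product computation above as the derivation of the explicit constant $\gamma_q$.
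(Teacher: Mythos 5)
The paper does not prove this statement at all: it is quoted verbatim from Ramar\'e \cite[Lemma 19.1]{R1}, so your proposal, which actually derives the main term, does strictly more than the paper. Your main-term computation is correct: inserting $\mu^2(m)=\sum_{d^2\mid m}\mu(d)$, using the Chinese remainder theorem to get solvability exactly when $(d^2,q)\mid b$ with the count $N/[d^2,q]+O(1)$, completing the $d$-sum and evaluating $\frac1q\sum_{(d^2,q)\mid b}\mu(d)(d^2,q)/d^2$ by an Euler product does reproduce the stated $\gamma_q(N-a)$, including the vanishing when some $p^2\mid (b,q)$. The one place where your sketch is not sound is the error term, which you yourself flag as the main obstacle: the heuristic that the progression modulo $[d^2,q]$ ``meets $[1,N]$ only if $d\ll\sqrt{N/q}$'' is false, since a progression with modulus exceeding $N$ can still contribute one term, and moreover $d$ as large as $\sqrt N$ can have $[d^2,q]\le N$ whenever $(d^2,q)$ is large; the naive bookkeeping therefore only gives $O(\sqrt N)$, and a self-contained proof of the $O_\varepsilon(q^\varepsilon\sqrt{N/q})$ bound requires a genuinely different treatment of the range $d>\sqrt{N/q}$ (e.g.\ writing $m=d^2k$ with $k$ small and counting solutions of the quadratic congruence $d^2k\equiv b\ [q]$). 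Since you ultimately cite \cite[Lemma 19.1]{R1} for exactly this sharp error analysis --- which is precisely what the paper itself does --- the proposal is acceptable as it stands; just do not present the $[d^2,q]\le N$ heuristic as if it were the proof of the error bound. Incidentally, for the purposes of this paper the cruder $O(\sqrt N)$ error would in fact suffice, since $q\le\mathcal{L}^{O(1)}$ and the final error terms already carry factors like $N^{1/2}Q^{7+\varepsilon}$.
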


As in the notation of~\cite[Chapter 19]{R1}, for cubefree $q$ we define
$$
\gamma_q^*(a) = \sum_{d \mid q} \mu  \left (  \frac{q}{d} \right ) \gamma_d(a)
$$
with $a \in \mathbb{Z}/q\mathbb{Z}$ and
\begin{equation} \label{eq: t}
t(q)= \prod_{p \mid q} \frac{-1}{p^2-1}.
\end{equation}

Note that in some sense $\gamma_q^*(N-n)$ is defined to imitate $\mu^2(N-n)$ in arithmetic progression.

We recall the following result from~\cite[Lemma 19.2]{R1} which provides an explicit expression for $\gamma_q^*$.
\begin{lemma} \label{lem: explicit gamma}
We have $\gamma_q^*(a) = 6t(q)c_q(a)/\pi^2$ if $q$ is a positive cubefree integer, while if $q$ has a cubic factor greater than $1$ then $\gamma_q^*(a)=0$.
\end{lemma}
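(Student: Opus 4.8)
The plan is to reduce everything to a prime-by-prime computation via multiplicativity. Write $\gamma_d(a)=\tfrac{6}{\pi^2}h_d(a)$, so that $h_1(a)=1$ and, for $d$ not killed by the square-divisor condition, $h_d(a)=\prod_{p\mid d}\tfrac{p^2}{p^2-1}\prod_{p\mid\!\mid d,\,p\mid a}\bigl(1-\tfrac1p\bigr)$. The first thing I would check is that $d\mapsto h_d(a)$ is \emph{multiplicative} in $d$: the Euler product above factors over the prime-power parts of $d$, and the vanishing criterion ``$p^2\mid(a,d)$ for some prime $p$'' is also local, since $p^2\mid(a,d)$ forces $p^2$ to divide exactly one of two coprime factors of $d$; as $a\bmod d$ determines $a$ modulo each prime power, the $a$-dependence does not interfere. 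Consequently $F(q):=\sum_{d\mid q}\mu(q/d)h_d(a)$ is a Dirichlet convolution of two multiplicative functions, hence multiplicative in $q$, and $\gamma_q^*(a)=\tfrac{6}{\pi^2}F(q)=\tfrac{6}{\pi^2}\prod_{p\mid q}F\bigl(p^{v_p(q)}\bigr)$. Because $\mu(p^k)=0$ for $k\ge2$, one gets $F(p^e)=h_{p^e}(a)-h_{p^{e-1}}(a)$ for all $e\ge1$ (with $h_{p^0}=h_1=1$). On the other side $t(q)$ of~\eqref{eq: t} is multiplicative in $q$ by inspection and $c_q(a)$ is multiplicative in $q$ by Lemma~\ref{lem: ramanujan sum property}, so it suffices to prove
\[
F(p^e)=\frac{-1}{p^2-1}\,c_{p^e}(a)\quad(e\in\{1,2\}),\qquad F(p^e)=0\quad(e\ge3).
\]

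Next I would tabulate the prime-power values of $h$: $h_p(a)=\tfrac{p}{p+1}$ if $p\mid a$ and $\tfrac{p^2}{p^2-1}$ if $p\nmid a$; $h_{p^2}(a)=0$ if $p^2\mid a$ and $\tfrac{p^2}{p^2-1}$ if $p^2\nmid a$; and, crucially, $h_{p^k}(a)=h_{p^2}(a)$ for every $k\ge2$, since $p\mid\!\mid p^k$ fails for $k\ge2$. The case $e\ge3$ is then immediate: $F(p^e)=h_{p^e}(a)-h_{p^{e-1}}(a)=0$. For $e=1$, $F(p)=h_p(a)-1$ simplifies to $\tfrac{-1}{p+1}$ when $p\mid a$ and to $\tfrac{1}{p^2-1}$ when $p\nmid a$. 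For $e=2$, $F(p^2)=h_{p^2}(a)-h_p(a)$ is $\tfrac{-p}{p+1}$, $\tfrac{p}{p^2-1}$, $0$ in the respective cases $p^2\mid a$, $p\mid\!\mid a$, $p\nmid a$. Matching these against the explicit values $c_p(a)\in\{-1,p-1\}$ and $c_{p^2}(a)\in\{0,-p,p^2-p\}$ recorded in Lemma~\ref{lem: ramanujan sum property} confirms $F(p^e)=\tfrac{-1}{p^2-1}c_{p^e}(a)$ in every case.

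Assembling the local factors finishes both statements: if $q$ has a cubic factor greater than $1$ then $v_p(q)\ge3$ for some $p$, so the factor $F(p^{v_p(q)})$ vanishes and $\gamma_q^*(a)=0$; while if $q$ is cubefree then
\[
\gamma_q^*(a)=\frac{6}{\pi^2}\prod_{p\mid q}\frac{-1}{p^2-1}\,c_{p^{v_p(q)}}(a)=\frac{6}{\pi^2}\,t(q)\,c_q(a),
\]
by~\eqref{eq: t} and the multiplicativity of $c_q(a)$. I expect no serious obstacle: the only step that genuinely needs care is the multiplicativity of $d\mapsto h_d(a)$, including compatibility with the zero cases; after that the argument is the routine bookkeeping of comparing two short tables of prime-power values.
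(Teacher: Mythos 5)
Your argument is correct: the reduction to prime powers via multiplicativity of $d\mapsto h_d(a)$ (including the vanishing case $p^2\mid(a,d)$), the identity $F(p^e)=h_{p^e}(a)-h_{p^{e-1}}(a)$, and the case-by-case match of $F(p)$ and $F(p^2)$ with $\tfrac{-1}{p^2-1}c_p(a)$ and $\tfrac{-1}{p^2-1}c_{p^2}(a)$, together with $h_{p^k}=h_{p^2}$ for $k\ge 2$ giving $F(p^e)=0$ for $e\ge 3$, all check out, and reassembling with the multiplicativity of $t(q)$ and of $c_q(a)$ (Lemma~\ref{lem: ramanujan sum property}) yields exactly the stated formula. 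Note that the paper itself gives no proof of this lemma; it is quoted directly from Ramar\'e~\cite[Lemma 19.2]{R1}, so your computation serves as a self-contained verification of the cited result rather than an alternative to an argument in the paper, and it is essentially the natural local (Euler-factor) computation one would expect behind that reference.
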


Let $(a',q')=1$ and $q$ be a  positive cubefree integer, denote
\begin{equation} \label{def:rho_q}
\rho_q(a)=
\begin{cases}
0 & \mbox{if $(a,q)>1$ or $(q,q') \nmid (a-a')$,} \\[3pt]
\displaystyle \frac{q}{\varphi([q,q'])} & \mbox{if $(q,q') \mid (a-a')$ and $(a,q)=1$.}
\end{cases}
\end{equation}

To motivate the definition of $\rho_q$,  consider the  sum
$$
S= \sum_{\substack{n \le N \\ n \equiv a' [q'] \\  n \equiv a [q]}} \Lambda(n).
$$

By the Chinese remainder theorem, the simultaneous congruence equations is solvable if and only if $(a,q) =1$ and $(q,q') \mid (a-a')$. If this is the case then for some $0\le \sigma < [q,q']$ we should expect
$$
S = \sum_{\substack{n \le N \\  n \equiv \sigma [ [q,q'] ]  }} \Lambda(n)  = \rho_q(a)  \frac{N}{q} + o \left (\frac{N}{\varphi([q,q'])} \right ).
$$

We remark that if $a_1 \equiv a_2 [q]$ then $\rho_{q}(a_1) = \rho_{q}(a_2)$.

\begin{lemma} \label{lem:rho multi}
For $q_1, q_2$ positive cubefree integers with $(q_1,q_2)=1$, we have
$$
 \rho_{q_1 q_2}(a) = \varphi(q') \rho_{q_1 }(a)  \rho_{ q_2}(a).
$$
In particular for any positive integer $a$, $\varphi(q')\rho_{q}(a)$
is a multiplicative function of $q$.
\end{lemma}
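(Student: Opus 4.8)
The plan is to read the identity straight off the definition~\eqref{def:rho_q}, separating the case in which one (hence both) sides vanish from the case in which both sides are nonzero.

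\textbf{Step 1: the degenerate case.} Since $(q_1,q_2)=1$, the product $q_1q_2$ is again cubefree, so $\rho_{q_1q_2}$ is defined; moreover $(a,q_1q_2)=(a,q_1)(a,q_2)$ and $(q_1q_2,q')=(q_1,q')(q_2,q')$, and since $(q_1,q')\mid q_1$ and $(q_2,q')\mid q_2$ with $(q_1,q_2)=1$, the numbers $(q_1,q')$ and $(q_2,q')$ are coprime, so $(q_1q_2,q')\mid(a-a')$ holds if and only if both $(q_1,q')\mid(a-a')$ and $(q_2,q')\mid(a-a')$. Feeding this into the two clauses of~\eqref{def:rho_q} shows that $\rho_{q_1q_2}(a)=0$ precisely when $(a,q_1)>1$ or $(q_1,q')\nmid(a-a')$ or $(a,q_2)>1$ or $(q_2,q')\nmid(a-a')$, i.e. precisely when $\rho_{q_1}(a)=0$ or $\rho_{q_2}(a)=0$. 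In that situation both sides of the claimed identity are $0$.

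\textbf{Step 2: the nonvanishing case.} Assume now $(a,q_1)=(a,q_2)=1$, $(q_1,q')\mid(a-a')$ and $(q_2,q')\mid(a-a')$. By Step~1 all of $\rho_{q_1}(a),\rho_{q_2}(a),\rho_{q_1q_2}(a)$ are given by the second line of~\eqref{def:rho_q}, and after cancelling the factor $q_1q_2$ the desired equality becomes the purely multiplicative statement
$$\varphi([q_1,q'])\,\varphi([q_2,q']) \;=\; \varphi(q')\,\varphi([q_1q_2,q']).$$
I would prove this by comparing $p$-adic valuations: putting $a_1=v_p(q_1)$, $a_2=v_p(q_2)$, $e=v_p(q')$, coprimality of $q_1,q_2$ forces $\min(a_1,a_2)=0$, whence $v_p([q_1q_2,q'])=\max(a_1+a_2,e)=\max(\max(a_1,a_2),e)$, while the unordered pair $\{v_p([q_1,q']),v_p([q_2,q'])\}$ equals $\{\,v_p(q'),\ v_p([q_1q_2,q'])\,\}$. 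Since $\varphi$ is multiplicative, grouping the prime-power factors on each side according to these valuations yields the identity. The ``in particular'' assertion is then immediate: for $(q_1,q_2)=1$ the lemma gives $\varphi(q')\rho_{q_1q_2}(a)=\big(\varphi(q')\rho_{q_1}(a)\big)\big(\varphi(q')\rho_{q_2}(a)\big)$, and since $\rho_1(a)=1/\varphi(q')$ we have $\varphi(q')\rho_1(a)=1$, so $q\mapsto\varphi(q')\rho_q(a)$ is multiplicative.

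\textbf{Anticipated difficulty.} There is no serious obstacle; the one place needing care is the bookkeeping in Step~1 — checking that the divisibility obstruction ``$(q,q')\nmid(a-a')$'' really factors through $q_1$ and $q_2$ separately, which is exactly what coprimality of $(q_1,q')$ and $(q_2,q')$ provides, and likewise for the coprimality obstruction. Once the two sides are known to vanish together, the remaining identity is an elementary prime-by-prime computation with $\varphi$.
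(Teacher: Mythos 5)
Your proposal is correct and follows essentially the same route as the paper: the same case split showing both sides vanish together, followed by the same reduction to the identity $\varphi(q')\varphi([q_1q_2,q'])=\varphi([q_1,q'])\varphi([q_2,q'])$. The only divergence is that you verify this totient identity by comparing $p$-adic valuations, whereas the paper manipulates it via $ab=[a,b](a,b)$ and a coprimality observation; your prime-by-prime check is a perfectly sound (indeed slightly more careful) way to finish.
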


\begin{proof}
If $(a,q_1 q_2)>1$ then either $(a,q_1)>1$ or $(a,q_2)>1$. If $(q_1 q_2,q') \nmid (a-a')$ then we have  $(q_1,q') \nmid (a-a')$ or $(q_2,q') \nmid (a-a')$. Consequently for both cases we obtain
$$
 \rho_{q_1 q_2}(a)=\rho_{q_1 }(a)  \rho_{ q_2}(a)=0.
$$

Clearly $(q_1 q_2,q') \mid (a-a')$ and $(a,q_1 q_2)=1$ if and only if we satisfy both the conditions
$$
\begin{cases}
\mbox{$(q_1,q') \mid (a-a')$ and $(a,q_1)=1$,} \\
\mbox{$(q_2,q') \mid (a-a')$ and $(a,q_2)=1$.}
\end{cases}
$$
Hence it is enough to show
$$
\varphi(q')\varphi([q_1 q_2, q'])  = \varphi([q_1,q']) \varphi ([q_2,q']).
$$

Let us recall the identity
$
ab = [a,b] \cdot (a,b)
$
for all positive integers $a,b$. Consequently
\begin{align*}
\varphi(q')\varphi([q_1 q_2, q']) &  = \varphi(q')  \varphi \left (\frac{q_1 q_2 q'}{(q_1,q') (q_2,q')} \right ) \\
& =\varphi(q')  \varphi \left (  \frac{[q_1,q'] \cdot [q_2,q']}{q'} \right)\\
& =\varphi([q_1,q']) \varphi ([q_2,q']),
\end{align*}
since 
$$
\left ([q_1,q'], \frac{[q_2,q']}{q'} \right )= \left (q',\frac{[q_2,q']}{q'} \right )=1.
$$
The result follows immediately.
\end{proof}

We are now ready to define our local approximation for $f$. Set
$$
\rho_q^*(c) = \sum_{d|q} \mu \left ( \frac{q}{d}  \right ) \rho_d(c)
$$
for any positive cubefree integer $q$.

The next result provides an explicit expression for $\rho_q^*$.
\begin{lemma} \label{lem: rho_q^* properties}
Let $q=q_1q_2^2$ with $\mu^2(q_1q_2)=1$. If $a_1 \equiv a_2 [q]$ then $\rho_{q}^*(a_1) = \rho_{q}^* (a_2)$.
 For any positive integer $a$, $ \varphi(q')\rho_q^*(a)$ is a multiplicative function of $q$. Furthermore  
\begin{align*}
\rho_q^*(a) & =  \mathbbm{1}_{q_2^2 \mid q'} \cdot \frac{   \mu(q_1/(q_1,q'))  c_{q_1/(q_1,q')}(a)  c_{(q_1,q')q_2^2}(a-a')}{  \varphi(q') \varphi(q_1/(q_1,q'))}.
\end{align*}
\end{lemma}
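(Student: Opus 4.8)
**Proof proposal for Lemma~\ref{lem: rho_q^* properties}.**

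The plan is to reduce everything to multiplicativity plus a prime-by-prime computation. The first claim, that $\rho_q^*(a_1)=\rho_q^*(a_2)$ whenever $a_1\equiv a_2\,[q]$, is immediate from the definition $\rho_q^*(c)=\sum_{d\mid q}\mu(q/d)\rho_d(c)$ together with the already-noted fact that $\rho_d$ is periodic modulo $d$ (hence modulo $q$) for each $d\mid q$. For the multiplicativity of $q\mapsto\varphi(q')\rho_q^*(a)$, I would argue as follows. By Lemma~\ref{lem:rho multi}, $q\mapsto\varphi(q')\rho_q(a)$ is multiplicative; writing $h(q):=\varphi(q')\rho_q(a)$ and using that $\rho_q^*=\mu*\rho_q$ (Dirichlet convolution in $q$), we get $\varphi(q')\rho_q^*(a)=\sum_{d\mid q}\mu(q/d)h(d)$. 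This is the Dirichlet convolution of the multiplicative functions $\mu$ and $h$ — except for the nuisance that $\varphi(q')$ is a fixed constant, not $\varphi$ evaluated at the running variable, so I should instead write $\varphi(q')\rho_q^*(a)=(\mu * h)(q)$ directly and note $\mu*h$ is multiplicative as a convolution of two multiplicative functions. (One must be slightly careful since $\rho_q^*$ is only defined for cubefree $q$; but the convolution identity and the coprime factorization $q=q_1q_2^2$ only ever involve divisors of a cubefree number, so this is harmless.)

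The substance is the explicit formula. By the multiplicativity just established it suffices to evaluate $\varphi(q')\rho_{p^k}^*(a)$ for $k\in\{1,2\}$ and check it matches the claimed product formula at the prime $p$. For $k=1$: $\varphi(q')\rho_p^*(a)=h(p)-h(1)=\varphi(q')\bigl(\rho_p(a)-\rho_1(a)\bigr)$, and here $\rho_1(a)=1/\varphi(q')$ while $\rho_p(a)$ is $0$ or $p/\varphi([p,q'])$ according to the divisibility conditions; one then splits into the cases $p\mid q'$ versus $p\nmid q'$ and into $(p,q')\mid(a-a')$ versus not, and in each case recognizes the answer as the asserted value of $\mu(q_1/(q_1,q'))c_{q_1/(q_1,q')}(a)c_{(q_1,q')q_2^2}(a-a')/(\varphi(q_1/(q_1,q')))$ with $q_1=p$, $q_2=1$, using Lemma~\ref{lem: ramanujan sum property} to identify $c_p(a)$ with $-1$ or $p-1$ and $c_p(a-a')$ likewise. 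For $k=2$, i.e. $q=p^2$ so $q_1=1$, $q_2=p$: here $\rho_{p^2}^*(a)=\rho_{p^2}(a)-\rho_p(a)$, and the indicator $\mathbbm{1}_{q_2^2\mid q'}=\mathbbm{1}_{p^2\mid q'}$ should emerge because if $p^2\nmid q'$ then $[p^2,q']$ and $[p,q']$ force $\rho_{p^2}(a)=\rho_p(a)$ (the two progressions coincide), giving cancellation; when $p^2\mid q'$ one computes both terms directly and matches against $c_{p^2}(a-a')$ via the three-case formula in Lemma~\ref{lem: ramanujan sum property}. Finally I would note the claimed formula is visibly multiplicative in $q$ (the three Ramanujan-sum factors and the $\varphi$ factors all factor over primes, and $(q_1,q')$, $q_2^2\mid q'$ decompose prime-by-prime), so agreement at prime powers yields agreement for all cubefree $q$.

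The main obstacle I anticipate is purely bookkeeping: the case analysis for $k=1$ has a four-way split ($p\mid q'$ or not) $\times$ ($p\mid(a-a')$ or not, which only matters when $p\mid q'$), and in the mixed cases one must be attentive that $\rho_p$ is defined via $[p,q']$ whereas the target formula packages the dependence on $q'$ through $(q_1,q')=(p,q')$ and the modulus $(q_1,q')q_2^2$ of the second Ramanujan sum. Keeping $c_{q_1/(q_1,q')}(a)$ — which is $c_1(a)=1$ when $p\mid q'$ and $c_p(a)=-1$ when $p\nmid q'$ (using $(a,p)=1$) — correctly synchronized with the $\mu(q_1/(q_1,q'))$ prefactor is where sign errors would creep in, so I would tabulate all cases explicitly before claiming the match. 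Everything else is routine once Lemma~\ref{lem:rho multi} and Lemma~\ref{lem: ramanujan sum property} are in hand.
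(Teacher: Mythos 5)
Your proposal is correct and follows essentially the same route as the paper: periodicity directly from the definition, multiplicativity of $\varphi(q')\rho_q^*(a)$ as the Dirichlet convolution of $\mu$ with the multiplicative function $\varphi(q')\rho_\cdot(a)$ from Lemma~\ref{lem:rho multi}, and then the prime-power evaluations $\varphi(q')\rho_p^*(a)=\varphi(q')\rho_p(a)-1$ and $\varphi(q')\rho_{p^2}^*(a)=\varphi(q')(\rho_{p^2}(a)-\rho_p(a))$ matched case by case against the Ramanujan-sum values of Lemma~\ref{lem: ramanujan sum property}. The only loose phrase is the parenthetical for $p^2\nmid q'$: when $p\nmid q'$ the progressions modulo $[p^2,q']$ and $[p,q']$ do not coincide, but the cancellation you invoke still holds since $p^2/\varphi([p^2,q'])=p/\varphi([p,q'])$, so $\rho_{p^2}(a)=\rho_p(a)$ in all subcases.
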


\begin{proof}

We note that if $a_1 \equiv a_2 [q]$ then $a_1 \equiv q_2 [d]$ for all divisors $d$ of $q$. Hence
\begin{align*}
\rho_q^*(a_1)  =  \sum_{d|q} \mu(q/d) \rho_d(a_1)  = \sum_{d|q} \mu(q/d) \rho_d(a_2) =\rho_{q}^*(a_2).
\end{align*}

Appealing to Lemma~\ref{lem:rho multi}, $ \varphi(q') \rho_{q}^*$ is a Dirichlet convolution of two multiplicative functions, hence $ \varphi(q') \rho_{q}^*$ is multiplicative in $q$. Therefore we factor
$$
\varphi(q') \rho_{q}^* = \varphi(q') \rho_{q_1}^* \cdot    \varphi(q') \rho_{q_2^2}^*.
$$
We first consider $\varphi(q') \rho_{q_1}^*$. Recall that $\rho_1(a) =1/\varphi(q')$ and so
\begin{align*}
 \varphi(q') \rho_{p}^*(a) &=  \varphi(q') \rho_p(a)  -1 .
\end{align*}
From~\eqref{def:rho_q}, we check 
\begin{equation} \label{rho_p^* explicit expression}
 \varphi(q') \rho_{p}^*(a)=
\begin{cases}
-1 & \mbox{if $p \nmid q'$, $p \mid a$,} \\
1/(p-1) & \mbox{if $p \nmid q'$, $p \nmid a$,} \\
-1 & \mbox{if $p \mid q'$, $p \nmid (a-a')$,} \\
p-1  & \mbox{if $p\mid q'$, $p\mid (a-a')$.}
\end{cases}
\end{equation}
In the last line, we note that the condition implies that $(a,p)=1$ since $(a',q')=1$. Therefore
\begin{align*}
\varphi(q') \rho_{q_1}^*(a) & = \prod_{\substack{ p \mid q_1 \\ p \nmid q' \\  p \mid a}}(-1) \prod_{\substack{p \mid q_1 \\ p \nmid q' \\p \nmid a }} \frac{1}{p-1} \prod_{ \substack{p \mid (q_1,q') \\p \nmid (a-a')}} (-1) \prod_{\substack{p \mid (q_1,q') \\ p \mid (a-a')}}( p-1) \\
& = \frac{\mu(q_1/(q_1,q'))c_{q_1/(q_1,q')}(a) c_{(q_1,q')}(a-a')}{\varphi(q_1/(q_1,q'))}.
\end{align*}

Lastly, we turn to $ \varphi(q') \rho_{q_2^2}^*$ and we see  for any $p^2 \mid q_2^2$ that
\begin{align*}
 \varphi(q') \rho_{p^2}^* (a) & = \sum_{d \mid p^2} \mu(p^2/d) \varphi(q') \rho_{d}(a) \\
 &= \varphi(q') \rho_{p^2}(a)-\varphi(q')\rho_{p}(a).
\end{align*}
We readily check that $\varphi(q') \rho_{p^2}^*(a)$ is
$$
\begin{cases}
0 &\mbox{if $(p^2,q')=1$, $p \nmid a$,} \\
0 &\mbox{if $(p^2,q')=p$, $p \nmid a$, $p \mid (a-a')$,} \\
0 &\mbox{if $(p^2,q')=p$, $p \nmid a$,  $p\nmid (a-a')$,} \\
0  &\mbox{if $(p^2,q')=p^2$, $p \nmid a$,  $p \nmid (a-a')$,} \\
-p &\mbox{if $(p^2,q')=p^2$, $p \nmid a$, $p \mid \! \mid (a-a')$,} \\
 p(p-1) &\mbox{if $(p^2,q')=p^2$, $p \nmid a$,   $p^2 \mid (a-a')$,} \\
0 & \mbox{if $p \mid a$.} \\
\end{cases} 
$$
We condense the expression to the simpler 
$$
 \varphi(q') \rho_{p^2}^*(a) =
\begin{cases}
0 &\mbox{if $(p^2,q') \le p$,} \\
 0  &\mbox{if $p^2 \mid q'$,  $p \nmid (a-a')$,} \\
 -p &\mbox{if $p^2 \mid q'$, $p \mid \! \mid (a-a')$, } \\
 p(p-1) &\mbox{if $p^2 \mid q'$,   $p^2 \mid (a-a')$}, 
\end{cases} 
$$
since $(a',q')=1$.
Hence we obtain
$$
 \varphi(q') \rho_{q_2^2}^*(a)= \mathbbm{1}_{q_2^2|q'} \cdot  c_{q_2^2}(a-a').
$$
The result follows immediately.
\end{proof}

\subsection{Local Hermitian product}

In this section we compute various \textit{local} Hermitian products explicitly. 

We denote \begin{equation} \label{def: theta_q^*}
\vartheta_q^*(a) =  \gamma_q^*(N-a)
\end{equation}
for all positive cubefree integer $q$.

For fixed $q$, we denote  $\mathscr{F}( \mathbb{Z}/q \mathbb{Z})$ to be the vector space of complex valued functions over $\mathbb{Z}/q \mathbb{Z}$. We endow this vector space with the \textit{local} Hermitian product by setting
\begin{equation} \label{eq: local product}
[f|g]_q = \frac{1}{q}  \sum_{n \bmod q} f(n) \overline{g(n)}
\end{equation}
for all $f,g \in \mathscr{F}(\mathbb{Z}/q\mathbb{Z})$.

We now state an explicit expression for the norms of $\vartheta_q^*$ and $\rho_q^*$.

\begin{lemma} \label{lem: theta,rho norms}
For all $q =q_1q_2^2$ with $\mu^2(q_1 q_2)=1$, we have
$$
\lVert \vartheta_q^* \rVert_q^2 =  \left (\frac{6 t(q)}{\pi^2} \right )^2  \varphi(q)
$$
 and
$$
\lVert \rho_q^* \rVert_q^2 = \mathbbm{1}_{q_2^2|q'} \cdot  \frac{ \varphi(q_2^2)\varphi((q_1,q'))^2  }{ \varphi(q')^2 \varphi(q_1)}.
$$
\end{lemma}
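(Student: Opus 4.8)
The plan is to compute each norm separately by exploiting multiplicativity. For $\lVert\vartheta_q^*\rVert_q^2$, recall from \eqref{def: theta_q^*} and Lemma~\ref{lem: explicit gamma} that $\vartheta_q^*(a) = \gamma_q^*(N-a) = \tfrac{6}{\pi^2} t(q) c_q(N-a)$ for cubefree $q$ (and it vanishes otherwise, consistent with the stated formula since $\mu^2(q_1q_2)=1$ forces $q$ cubefree). Hence
$$
\lVert\vartheta_q^*\rVert_q^2 = \frac{1}{q}\sum_{a \bmod q} \left(\frac{6t(q)}{\pi^2}\right)^2 |c_q(N-a)|^2 = \left(\frac{6t(q)}{\pi^2}\right)^2 \cdot \frac{1}{q}\sum_{a \bmod q} |c_q(N-a)|^2,
$$
and as $a$ runs over $\mathbb{Z}/q\mathbb{Z}$ so does $N-a$, so the task reduces to showing $\tfrac{1}{q}\sum_{b \bmod q} |c_q(b)|^2 = \varphi(q)$. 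Here I would use Lemma~\ref{lem: ramanujan sum property}: $|c_q(b)| = \varphi((q,b))$, so $\sum_{b \bmod q}\varphi((q,b))^2 = \sum_{d\mid q}\varphi(d)^2 \cdot \#\{b \bmod q : (q,b)=d\} = \sum_{d\mid q}\varphi(d)^2\varphi(q/d)$. This last divisor sum is multiplicative in $q$; evaluating it on a prime power $p$ (cubefree, so $p$ or $p^2$) gives $\varphi(1)^2\varphi(p) + \varphi(p)^2 = (p-1) + (p-1)^2 = p(p-1) = p\varphi(p)$, and similarly at $p^2$ one gets $p^2\varphi(p^2)$; in both cases the value is $q\varphi(q)$ on the prime-power piece, so $\sum_{d\mid q}\varphi(d)^2\varphi(q/d) = q\varphi(q)$, giving the claimed formula. (Alternatively one can invoke Lemma~\ref{lem: double mobuis sum} with $m=n=q$ applied through the divisor-sum expression $c_q(b)=\sum_{d\mid(q,b)}d\mu(q/d)$, which is in fact the cleaner route and ties the lemma into the proof.)

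For $\lVert\rho_q^*\rVert_q^2$ I would use the explicit expression from Lemma~\ref{lem: rho_q^* properties}, namely
$$
\rho_q^*(a) = \mathbbm{1}_{q_2^2 \mid q'}\cdot \frac{\mu(q_1/(q_1,q'))\, c_{q_1/(q_1,q')}(a)\, c_{(q_1,q')q_2^2}(a-a')}{\varphi(q')\varphi(q_1/(q_1,q'))}.
$$
If $q_2^2 \nmid q'$ then $\rho_q^*\equiv 0$ and both sides vanish, so assume $q_2^2 \mid q'$. Writing $e = (q_1,q')$ and $f = q_1/e$ (coprime to $q'$, hence coprime to $q_2^2$ and to $e$), the numerator modulus factors are $c_f(a)$ and $c_{eq_2^2}(a-a')$, and $f$ is coprime to $eq_2^2$. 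I would then compute
$$
\lVert\rho_q^*\rVert_q^2 = \frac{\mu^2(f)}{\varphi(q')^2\varphi(f)^2}\cdot\frac{1}{q}\sum_{a \bmod q} |c_f(a)|^2\, |c_{eq_2^2}(a-a')|^2.
$$
Since $q = f \cdot e q_2^2$ with $(f, eq_2^2)=1$, the Chinese remainder theorem splits $a \bmod q$ into independent coordinates $a \bmod f$ and $a \bmod eq_2^2$; the shift by $a'$ is harmless in the second coordinate. Thus the sum factors as $\big(\tfrac1f\sum_{a\bmod f}|c_f(a)|^2\big)\big(\tfrac{1}{eq_2^2}\sum_{b\bmod eq_2^2}|c_{eq_2^2}(b)|^2\big) = \varphi(f)\cdot\varphi(eq_2^2)$ by the computation from the first part. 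Substituting and using $\mu^2(f)=1$ (since $q_1$ is squarefree), $\varphi(eq_2^2)=\varphi(e)\varphi(q_2^2)$, and $\varphi(f) = \varphi(q_1)/\varphi(e)$ (as $(e,f)=1$, $q_1 = ef$), I get
$$
\lVert\rho_q^*\rVert_q^2 = \frac{\varphi(f)\varphi(e)\varphi(q_2^2)}{\varphi(q')^2\varphi(f)^2} = \frac{\varphi(e)\varphi(q_2^2)}{\varphi(q')^2\varphi(f)} = \frac{\varphi(e)^2\varphi(q_2^2)}{\varphi(q')^2\varphi(q_1)},
$$
which with $e=(q_1,q')$ is exactly the stated expression.

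The computations are all routine once the factorizations are set up; the only place requiring genuine care is the second part, where one must correctly track the three pairwise-coprime pieces $f$, $e$, $q_2^2$ of $q$ and verify that the moduli appearing in the two Ramanujan sums ($f$ and $eq_2^2$) are exactly these pieces, so that the local product \eqref{eq: local product} genuinely factors over them via CRT. A secondary point to check is that the indicator $\mathbbm{1}_{q_2^2\mid q'}$ is handled consistently — in particular that when it is present the identity $\varphi(f)=\varphi(q_1)/\varphi(e)$ is valid, which holds because $q_1$ is squarefree and $e \mid q_1$. I do not anticipate a serious obstacle; the main risk is a bookkeeping slip in the $\varphi$-identities, and I would double-check the prime-power reduction $\sum_{d\mid q}\varphi(d)^2\varphi(q/d) = q\varphi(q)$ against both cases $q=p$ and $q=p^2$ explicitly.
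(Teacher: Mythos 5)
Your overall plan is reasonable and organizes the computation differently from the paper (the paper proves the $\rho_q^*$ formula by CRT--factoring $S(q)=\sum_{a\bmod q}|\varphi(q')\rho_q^*(a)|^2$ into prime blocks, reading off $S(p)$ from the case table \eqref{rho_p^* explicit expression} and evaluating $S(p^2)$ by orthogonality, while the $\vartheta_q^*$ norm is cited from Ramar\'e); you instead reduce both norms to the single mean--square identity $\frac1r\sum_{b\bmod r}|c_r(b)|^2=\varphi(r)$ for cubefree $r$, applied to the coprime blocks $q_1/(q_1,q')$ and $(q_1,q')q_2^2$. The CRT splitting, the treatment of the indicator $\mathbbm{1}_{q_2^2\mid q'}$, and the $\varphi$--bookkeeping in the second half are all fine, and the final formulas are correct.

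The genuine flaw is in your primary derivation of that mean--square identity. The relation $|c_q(b)|=\varphi((q,b))$ fails for non--squarefree $q$ (despite how Lemma~\ref{lem: ramanujan sum property} is phrased): for $q=p^2$ and $p\,\|\,b$ one has $c_{p^2}(b)=-p$, so $|c_{p^2}(b)|=p\neq p-1=\varphi((p^2,b))$, the point being that $\varphi(q)\neq\varphi((q,b))\,\varphi(q/(q,b))$ when the two factors share a prime. Consequently $\sum_{b\bmod q}|c_q(b)|^2$ is \emph{not} $\sum_{d\mid q}\varphi(d)^2\varphi(q/d)$, and that divisor sum is \emph{not} $q\varphi(q)$ once $p^2\mid q$: at $q=p^2$ it equals $p(p-1)+(p-1)^3+p^2(p-1)^2=(p-1)(p^3-p+1)$, e.g.\ $7$ at $q=4$, whereas the true value $\sum_{b\bmod 4}|c_4(b)|^2=8=q\varphi(q)$. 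So your claimed prime--power check ``at $p^2$ one gets $p^2\varphi(p^2)$'' is wrong, and the verification you proposed at the end would have exposed it; this matters precisely in the cases the lemma needs, namely $q_2>1$ (and the block $(q_1,q')q_2^2$ in the second norm). The repair is exactly your parenthetical alternative: expand $c_q(b)=\sum_{d\mid(q,b)}d\,\mu(q/d)$, so that
\begin{equation*}
\sum_{b\bmod q}|c_q(b)|^2=\sum_{d_1\mid q}\sum_{d_2\mid q}d_1d_2\,\mu(q/d_1)\mu(q/d_2)\frac{q}{[d_1,d_2]}=q\sum_{d_1\mid q}\sum_{d_2\mid q}\mu(q/d_1)\mu(q/d_2)(d_1,d_2)=q\varphi(q)
\end{equation*}
by Lemma~\ref{lem: double mobuis sum} (or, equivalently, write $c_q(b)$ as a sum over reduced residues and use orthogonality, which is what the paper does for its $S(p^2)$ block). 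With that substitution in place of the $\varphi((q,b))$ argument, the rest of your proof goes through.
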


\begin{proof}
The expression for $\lVert \vartheta_q^* \rVert_q^2$ can be derived as in~\cite[Equation (19.10)]{R1}.

Write
\begin{align*}
\lVert \rho_q^* \rVert_q^2  & = \frac{1}{q \varphi(q')^2} \sum_{a \bmod q} \left  |  \varphi(q') \rho_q^*(a) \right |^2 \\
&= \frac{1}{  q \varphi(q')^2}  S(q), \mbox{ say}.
\end{align*}
If $q_2^2 \nmid q'$ then we are done since $\rho_q^*=0$ by Lemma~\ref{lem: rho_q^* properties}. Otherwise, by the Chinese remainder theorem we factor
\begin{align*}
S(q) & =  S(q_1) S(q_2^2) \\
& = \prod_{p|q_1} S(p) \prod_{p^2|q_2^2} S(p^2).
\end{align*}

Appealing to~\eqref{rho_p^* explicit expression}, we readily check
$$
S(p) = 
\begin{cases}
1+1/(p-1) =p/(p-1)& \mbox{if $p \nmid q'$,} \\
(p-1)^2 +(p-1)= p(p-1) & \mbox{if $p \mid q'$.}
\end{cases}
$$

By Lemma~\ref{lem: rho_q^* properties}, expanding the Ramanujan sum and interchanging the summation, we obtain
\begin{align*}
S(p^2)&  =  \sum_{a \bmod p^2} \left |c_{p^2}(a-a')  \right |^2 \\
&=  \sideset{}{^*} \sum_{ \substack{r_1 =1} }^{p^2}      \sideset{}{^*} \sum_{\substack{r_2 =1}}^{p^2}  \mathbf{e}_{p^2} \left  ( a'(r_2 -r_1) \right )   \sum_{a \bmod p^2}  \mathbf{e}_{p^2} \left ( a(r_1 -r_2) \right ).
\end{align*}
By orthogonality, we get
$$
S(p^2) = p^2 \varphi(p^2).
$$

It follows 
\begin{align*}
\lVert \rho_q^* \rVert_q^2 
& = \frac{1}{q \varphi(q')^2}  \prod_{ \substack{p|q_1 \\ p \nmid q'}} \frac{p}{p-1} \prod_{\substack{p|q_1 \\ p|q'}} p (p-1) \prod_{p|q_2} (p^2 \varphi(p^2)) \\
& = \frac{1}{\varphi(q')^2} \prod_{ \substack{p|q_1 \\ p \nmid q'}} \frac{1}{p-1} \prod_{\substack{p|q_1 \\ p|q'}}  (p-1) \prod_{p|q_2} \varphi(p^2) \\
& = \frac{\varphi(q_2^2) \varphi((q_1,q'))^2}{\varphi(q')^2 \varphi(q_1)}, 
\end{align*}
since 
$$
\left ((q_1,q'), \frac{q_1}{(q_1,q')} \right )=1.
$$
\end{proof}

For all positive cubefree integer $q$, we denote
$$
b(q) =  \sideset{}{^*} \sum_{r=1}^{q}  \mathbf{e}_q \left ( rN \right ) h_q(r),
$$
where
$$
h_q(r)=  \sum_{a \bmod q} \varphi(q') \rho^*_q(a)  \mathbf{e}_q \left ( -ra \right ).
$$

We now state a result which provides an explicit  expression for $b(p)$ and $b(p^2)$.

\begin{lemma} \label{b explicit expression}
The function $b(q)$ is a multiplicative function of $q$. Moreover we have
$$
b(p) = 
\begin{cases}
\displaystyle \frac{-p c_p(N)}{\varphi(p)} & \mbox{if $p \nmid q'$,} \\[8pt]
\displaystyle p c_{p}(N-a') & \mbox{if $p \mid q'$,}
\end{cases}
$$
and
$$
b(p^2) = \mathbbm{1}_{p^2 |q'} \cdot p^2 c_{p^2}(N-a').
$$
\end{lemma}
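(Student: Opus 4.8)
The plan is to establish multiplicativity of $b(q)$ first, then compute $b(p)$ and $b(p^2)$ directly from the explicit formula for $\varphi(q')\rho_q^*$ in Lemma~\ref{lem: rho_q^* properties}. For multiplicativity, suppose $(q_1,q_2)=1$ with $q_1q_2$ cubefree. By the Chinese remainder theorem, writing $r = r_1 \overline{q_2} q_2 + r_2 \overline{q_1} q_1$ (or using the standard splitting $\mathbf{e}_{q_1q_2}(x) = \mathbf{e}_{q_1}(x\overline{q_2})\mathbf{e}_{q_2}(x\overline{q_1})$) and $a = a_1 q_2 + a_2 q_1 \bmod q_1q_2$, the sum $h_{q_1q_2}(r)$ factors as $h_{q_1}(\cdot)h_{q_2}(\cdot)$ because $\varphi(q')\rho_{q_1q_2}^*(a)$ is multiplicative in its modulus (this is part of Lemma~\ref{lem: rho_q^* properties}; note $\varphi(q')\rho_q^*$, not $\rho_q^*$ itself, is the multiplicative object, and the $\varphi(q')$ factors are handled consistently since there is one copy in $b$). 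Tracking the coprime reduced-residue sums $r_1, r_2$ then gives $b(q_1q_2) = b(q_1)b(q_2)$; one must be a little careful that the extra $\varphi(q')$ normalizations match, but since $b(q)$ involves exactly one factor $\varphi(q')\rho_q^*$ the bookkeeping is clean.

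Next I would compute $h_p(r)$ for $r$ coprime to $p$. From \eqref{rho_p^* explicit expression}, $\varphi(q')\rho_p^*(a)$ depends only on $a \bmod p$, taking the value $-1$ on $a \equiv 0$ and $1/(p-1)$ on $a \not\equiv 0$ when $p \nmid q'$; and $-1$ on $a \not\equiv a' [p]$ together with $p-1$ on $a \equiv a' [p]$ when $p \mid q'$. So $h_p(r) = \sum_{a \bmod p} \varphi(q')\rho_p^*(a)\mathbf{e}_p(-ra)$. In the case $p \nmid q'$, this is $-1 + \frac{1}{p-1}\sum_{a \not\equiv 0}\mathbf{e}_p(-ra) = -1 + \frac{1}{p-1}(-1) = \frac{-p}{p-1}$ since $\sum_{a \bmod p}\mathbf{e}_p(-ra)=0$ for $p\nmid r$; this is independent of $r$. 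Hence $b(p) = \frac{-p}{p-1}\sum_{r}^{*}\mathbf{e}_p(rN) = \frac{-p}{p-1}c_p(N) = \frac{-p\,c_p(N)}{\varphi(p)}$. In the case $p \mid q'$, write $\varphi(q')\rho_p^*(a) = -1 + p\cdot\mathbbm{1}_{a\equiv a'[p]}$, so $h_p(r) = -\sum_{a}\mathbf{e}_p(-ra) + p\,\mathbf{e}_p(-ra') = 0 + p\,\mathbf{e}_p(-ra')$ (again $p\nmid r$), giving $b(p) = p\sum_r^{*}\mathbf{e}_p(r(N-a')) = p\,c_p(N-a')$.

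For $b(p^2)$: if $p^2 \nmid q'$ then $\rho_{p^2}^* = 0$ by Lemma~\ref{lem: rho_q^* properties}, so $b(p^2)=0$, matching the indicator. If $p^2 \mid q'$, then by Lemma~\ref{lem: rho_q^* properties} we have $\varphi(q')\rho_{p^2}^*(a) = c_{p^2}(a-a')$. Then $h_{p^2}(r) = \sum_{a \bmod p^2} c_{p^2}(a-a')\mathbf{e}_{p^2}(-ra)$; expanding $c_{p^2}(a-a') = \sum_{s}^{*}\mathbf{e}_{p^2}(s(a-a'))$ and swapping sums, the inner sum over $a \bmod p^2$ of $\mathbf{e}_{p^2}(a(s-r))$ vanishes unless $s \equiv r [p^2]$, forcing $s=r$ (valid since $r$ is coprime to $p^2$), yielding $h_{p^2}(r) = p^2\,\mathbf{e}_{p^2}(-ra')$. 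Therefore $b(p^2) = p^2\sum_r^{*}\mathbf{e}_{p^2}(r(N-a')) = p^2\,c_{p^2}(N-a')$, as claimed. I do not anticipate a serious obstacle here — the main point requiring care is the normalization bookkeeping of the $\varphi(q')$ factors in the multiplicativity argument, and making sure the orthogonality relation is applied with the correct coprimality hypothesis on $r$ (which holds because $r$ ranges over reduced residues).
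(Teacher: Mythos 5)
Your proposal is correct and follows essentially the same route as the paper: multiplicativity of $b$ via the Chinese remainder theorem and the multiplicativity of $\varphi(q')\rho_q^*$ from Lemma~\ref{lem: rho_q^* properties}, then explicit evaluation of $h_p(r)$ from \eqref{rho_p^* explicit expression} and of $h_{p^2}(r)$ by expanding the Ramanujan sum, using orthogonality with $(r,p)=1$ in each case. The computations, including the $\varphi(q')$ bookkeeping and the vanishing of $b(p^2)$ when $p^2\nmid q'$, match the paper's proof.
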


\begin{proof}
For positive cubefree integers $q_1,q_2$ with $\mu^2(q_1 q_2)=1$, we have by the Chinese remainder theorem 
\begin{align*}
h_{q_1 q_2} (r) & = \sum_{\substack{a_1 \bmod q_1 \\ a_2 \bmod q_2}}  \varphi(q')  \rho^*_{q_1 q_2} ( a_1 q_2 + a_2 q_1)  \mathbf{e}_{q_1} \left ( -ra_1 \right )   \mathbf{e}_{q_2} \left ( -ra_2 \right ) \\
& = \sum_{\substack{a_1 \bmod q_1 \\ a_2 \bmod q_2}}   \varphi(q')  \rho^*_{q_1 } ( a_1 )   \varphi(q') \rho^*_{q_2 } ( a_2 )   \mathbf{e}_{q_1} \left ( -ra_1 \right )   \mathbf{e}_{q_2} \left ( -ra_2 \right ) \\
& = h_{q_1} (r) h_{ q_2} (r),
\end{align*}
where in the second line we used Lemma~\ref{lem: rho_q^* properties}. Similarly, we show that $b(q)$ is a multiplicative function in $q$.

First we assume that $(p,q')=1$, appealing to~\eqref{rho_p^* explicit expression} we get
\begin{align*}
b(p) =  \sum_{r=1}^{p-1}  \mathbf{e}_{p} \left ( rN \right )\left \{ \frac{-1}{p-1} -1+  \frac{1}{p-1} \sum_{a \bmod p}  \mathbf{e}_{p} \left ( -ra \right)  \right \}. 
\end{align*}
Since $(r,p)=1$ and hence by orthogonality we have
$$
b(p)= \frac{-p}{\varphi(p) } \sum_{r=1}^{p-1}  \mathbf{e}_{p} \left ( rN \right ) = \frac{-p c_p(N)}{\varphi(p)}.
$$

Now assume $p \mid q'$ and appealing to~\eqref{rho_p^* explicit expression} we obtain
\begin{align*}
 b(p)   =  \sum_{r=1}^{p-1}  \mathbf{e}_{p} \left ( rN \right ) \left \{     (p-1)  \mathbf{e}_{p} \left ( -ra' \right ) +   \mathbf{e}_{p} \left ( -ra' \right ) - \sum_{a \bmod p}   \mathbf{e}_{p} \left ( -ra \right ) \right \}. 
\end{align*}
Since $(r,p)=1$, orthogonality gives
\begin{align*}
b(p) &  = p    \sum_{r=1}^{p-1}  \mathbf{e}_{p} \left ( r(N-a') \right )  \\
& =  p c_{p}(N-a').
\end{align*}

Next we consider $b(p^2)$. If $p^2 \nmid q'$ then we are done, otherwise expanding the Ramanujan sum and interchanging summation, we obtain
\begin{align*}
b(p^2) & = \sideset{}{^*} \sum_{r=1}^{p^2}  \mathbf{e}_{p^2} \left ( rN \right ) \sum_{a \bmod p^2 }c_{p^2}(a-a')  \mathbf{e}_{p^2} \left ( -ra \right ) \\
&  = \sideset{}{^*} \sum_{r=1}^{p^2}  \sideset{}{^*} \sum_{r_1 =1}^{p^2}  \mathbf{e}_{p^2} \left ( rN \right )  \mathbf{e}_{p^2} \left ( -ra' \right ) \sum_{a \bmod p^2}  \mathbf{e}_{p^2} \left ( a(r_1-r) \right ).
\end{align*}
Appealing to orthogonality then gives
\begin{align*}
b(p^2) & = p^2 \sideset{}{^*} \sum_{r=1}^{p^2}  \mathbf{e}_{p^2} \left ( r(N-a') \right ) \\
& = p^2 c_{p^2}(N-a').
\end{align*}
\end{proof}

Next, we derive an explicit expression for the cross product $[\vartheta_q^* | \rho_q^* ]_q$.

\begin{lemma} \label{lem:theta-rho inner product}
For all $q = q_1q_2^2$ with $\mu^2(q_1q_2)=1$, we have
$$
[\vartheta_q^* | \rho_q^* ]_q =  \mathbbm{1}_{q_2^2|q'}  \cdot  \frac{ 6t(q)  c_{q_1/(q_1,q')}(N)c_{(q_1,q')q_2^2}(N-a') }{ \pi^2 \varphi(q') \mu(q_1/(q_1,q')) \varphi(q_1/(q_1 ,q')) } .
$$
\end{lemma}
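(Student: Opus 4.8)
The plan is to reduce the cross product to the function $b(q)$ that was already evaluated in Lemma~\ref{b explicit expression}. Since the Ramanujan sum $c_q$ is integer-valued and, by Lemma~\ref{lem: rho_q^* properties}, $\rho_q^*$ is real-valued, the conjugation in~\eqref{eq: local product} is harmless; combining~\eqref{def: theta_q^*} with Lemma~\ref{lem: explicit gamma} gives
\[
[\vartheta_q^* | \rho_q^*]_q = \frac{1}{q}\sum_{a \bmod q}\gamma_q^*(N-a)\,\rho_q^*(a) = \frac{6t(q)}{\pi^2 q}\sum_{a \bmod q} c_q(N-a)\,\rho_q^*(a).
\]
Then I would expand $c_q(N-a) = \sideset{}{^*} \sum_{r=1}^{q} \mathbf{e}_q(rN)\mathbf{e}_q(-ra)$ and interchange the (finite) order of summation; by the definition of $h_q$ the inner sum becomes $\varphi(q')^{-1}\sideset{}{^*} \sum_{r=1}^{q} \mathbf{e}_q(rN)\,h_q(r)$, which is $b(q)/\varphi(q')$. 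Hence
\[
[\vartheta_q^* | \rho_q^*]_q = \frac{6t(q)}{\pi^2 q\,\varphi(q')}\,b(q),
\]
and it remains only to substitute the value of $b(q)$.

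To evaluate $b(q)$ I would use its multiplicativity from Lemma~\ref{b explicit expression}: writing $q = q_1 q_2^2$ with $\mu^2(q_1 q_2)=1$, factor $b(q) = \prod_{p\mid q_1} b(p)\prod_{p\mid q_2} b(p^2)$. Split the primes $p \mid q_1$ according to whether $p \nmid q'$ or $p \mid q'$. Since $q_1/(q_1,q')$ is exactly the product of the primes dividing $q_1$ but not $q'$, while $(q_1,q')$ is the product of those dividing both, multiplicativity of $\mu$, of $\varphi$, and of the Ramanujan sum in its modulus (Lemma~\ref{lem: ramanujan sum property}) collapses the product over the first class to $\mu(q_1/(q_1,q'))\,(q_1/(q_1,q'))\,c_{q_1/(q_1,q')}(N)/\varphi(q_1/(q_1,q'))$ and that over the second class to $(q_1,q')\,c_{(q_1,q')}(N-a')$. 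The factor $\prod_{p\mid q_2} b(p^2)$ vanishes unless $q_2^2 \mid q'$ (equivalently $p^2 \mid q'$ for every $p\mid q_2$, as $q_2$ is squarefree), in which case it equals $q_2^2\,c_{q_2^2}(N-a')$. As $(q_1,q')$ and $q_2^2$ are coprime, $c_{(q_1,q')}(N-a')\,c_{q_2^2}(N-a') = c_{(q_1,q')q_2^2}(N-a')$, and since $\frac{q_1/(q_1,q')}{\varphi(q_1/(q_1,q'))}\cdot (q_1,q')\cdot q_2^2 = \frac{q}{\varphi(q_1/(q_1,q'))}$, one obtains
\[
b(q) = \mathbbm{1}_{q_2^2 \mid q'}\cdot \frac{q\,\mu(q_1/(q_1,q'))\,c_{q_1/(q_1,q')}(N)\,c_{(q_1,q')q_2^2}(N-a')}{\varphi(q_1/(q_1,q'))}.
\]
Substituting this into the displayed formula for $[\vartheta_q^* | \rho_q^*]_q$, cancelling the factor $q$, and using $\mu(n) = 1/\mu(n)$ for squarefree $n$ to move $\mu(q_1/(q_1,q'))$ into the denominator, produces the claimed identity.

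The only genuinely new input is the identity $[\vartheta_q^* | \rho_q^*]_q = 6t(q)b(q)/(\pi^2 q\,\varphi(q'))$; everything afterwards is routine bookkeeping. The single place deserving care is the regrouping of the prime-by-prime factors of $b(q)$ into the two Ramanujan sums $c_{q_1/(q_1,q')}(N)$ and $c_{(q_1,q')q_2^2}(N-a')$ — in particular the coprimality $\bigl((q_1,q'),q_2^2\bigr)=1$ that allows the second merge — together with correctly tracking the sign $\mu(q_1/(q_1,q'))$. I do not anticipate any real obstacle beyond this.
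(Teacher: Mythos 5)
Your proposal is correct and follows essentially the same route as the paper: both reduce $[\vartheta_q^*|\rho_q^*]_q$ via Lemma~\ref{lem: explicit gamma} and the expansion of $c_q(N-a)$ to the quantity $b(q)$, then evaluate $b(q)$ prime-by-prime using Lemma~\ref{b explicit expression} and reassemble the factors into $c_{q_1/(q_1,q')}(N)\,c_{(q_1,q')q_2^2}(N-a')$, finishing with $\mu(n)=1/\mu(n)$ for squarefree $n$. The bookkeeping you flag (coprimality of $(q_1,q')$ and $q_2^2$, the sign $\mu(q_1/(q_1,q'))$, and the indicator $\mathbbm{1}_{q_2^2\mid q'}$) is handled correctly, so there is nothing to add.
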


\begin{proof}
If $q_2^2 \nmid q'$ then we are done, otherwise
\begin{align*}
[\vartheta_q^* | \rho_q^* ]_q  &  = \frac{1}{q} \sum_{a \bmod q} \gamma_q^*(N-a)  \rho_q^*(a)\\
& = \frac{6t(q)}{\pi^2 q \varphi(q') } \sum_{a \bmod q} c_{q}(N-a)  \varphi(q') \rho^*_q(a) \\
& = \frac{6t(q)}{\pi^2 q \varphi(q') } \sideset{}{^*} \sum_{r=1}^{q}  \mathbf{e}_{q} \left ( rN \right ) \sum_{a \bmod q}  \varphi(q') \rho^*_q(a)  \mathbf{e}_{q} \left ( -ra \right ).
\end{align*}
Therefore by the previous lemma, we arrive at
\begin{align*}
[\vartheta_q^* | \rho_q^* ]_q 
  & = \mathbbm{1}_{q_2^2|q'} \cdot \frac{6t(q)}{\pi^2 q \varphi(q') } \prod_{ \substack{ p|q_1 \\ p \nmid q'} } \frac{-p c_p(N)}{\varphi(p)} \prod_{\substack{p|q_1 \\ p|q'}}  p c_{p}(N-a') \prod_{p|q_2}  p^2 c_{p^2}(N-a')   \\
& = \mathbbm{1}_{q_2^2|q'} \cdot \frac{6t(q) \mu(q_1/(q_1,q')) }{\pi^2 \varphi(q') }   \frac{c_{q_1/(q_1,q')}(N)}{\varphi(q_1/(q_1 ,q'))} c_{(q_1,q')q_2^2}(N-a')
\end{align*}
and the result follows.
\end{proof}

\subsection{Local models and their products}
Let $q=q_1q_2^2$ with $\mu^2(q_1q_2)=1$,
we denote
\begin{equation} \label{def:eta_q^*}
\eta_q^* =   \frac{1}{2} \left ( \frac{   \pi^2}{ 6 t(q)}    \vartheta_q^* + \frac{  \varphi(q') \varphi(q_1/(q_1,q'))}{  \mu(q_1/(q_1,q'))} \widetilde{\rho_q^*}  \right ) ,
\end{equation}
\begin{equation} \label{def:kappa_q^*}
\kappa_q^* = \frac{1}{2} \left  ( \frac{ \pi^2   }{ 6 t(q)}    \vartheta_q^* - \frac{   \varphi(q') \varphi(q_1/(q_1,q'))   }{ \mu(q_1/(q_1,q'))} \widetilde{\rho_q^*} \right ),
\end{equation}
where 
$$
\widetilde{\rho_q^*}(a) = \frac{   \mu(q_1/(q_1,q'))  c_{q_1/(q_1,q')}(a)  c_{(q_1,q')q_2^2}(a-a')}{  \varphi(q') \varphi(q_1/(q_1,q'))}
$$
is just $\rho_q^*$ but without the divisibility condition on $q'$, see Lemma~\ref{lem: rho_q^* properties}. In particular, we can write 
$$
\rho_q^* = \mathbbm{1}_{q_2^2|q'} \cdot \widetilde{\rho_q^*}.
$$

Below, we compute the  norms of $\eta_q^*$ and $\kappa_q^*$.
\begin{lemma} \label{lem: norm of eta and kappa}
We have
$$
\lVert \eta_q^* \rVert_q^2 =  \frac{1}{2} \left (   \varphi(q) +  c_{q_1/(q_1,q')}(N) c_{(q_1,q') q_2^2}(N-a') \right ),
$$
$$
\lVert \kappa_q^* \rVert_q^2 =  \frac{1}{2} \left ( \varphi(q)  -  c_{q_1/(q_1,q')}(N) c_{(q_1,q') q_2^2}(N-a') \right ),
$$
and 
$$
[\eta_q^* | \kappa_q^*]_q =[\kappa_q^*|\eta_q^* ]_q = 0.
$$
\end{lemma}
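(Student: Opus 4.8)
The plan is to write $\eta_q^*$ and $\kappa_q^*$ as the half-sum and half-difference of two explicit Ramanujan-sum valued functions, compute three local inner products, and expand. I would set
$$
A = \frac{\pi^2}{6t(q)}\,\vartheta_q^* , \qquad B = \frac{\varphi(q')\,\varphi(q_1/(q_1,q'))}{\mu(q_1/(q_1,q'))}\,\widetilde{\rho_q^*} ,
$$
so that, by~\eqref{def:eta_q^*} and~\eqref{def:kappa_q^*}, $\eta_q^* = \frac{1}{2}(A+B)$ and $\kappa_q^* = \frac{1}{2}(A-B)$; the scalars are finite and nonzero since $t(q)\neq 0$ and $\mu(q_1/(q_1,q'))=\pm 1$ (as $q_1$ is squarefree). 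By~\eqref{def: theta_q^*} and Lemma~\ref{lem: explicit gamma} one has $A(a)=c_q(N-a)$, and by the defining formula of $\widetilde{\rho_q^*}$ (the expression in Lemma~\ref{lem: rho_q^* properties} with the indicator removed) one has $B(a)=c_m(a)\,c_\ell(a-a')$, where $m=q_1/(q_1,q')$ and $\ell=(q_1,q')q_2^2$. Since $q_1q_2$ is squarefree, $m$ and $\ell$ are coprime and $m\ell=q$, and this is the structural fact that makes everything factor.

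Next I would evaluate $[A|A]_q$, $[B|B]_q$, and $[A|B]_q$ by expanding Ramanujan sums into primitive exponential sums and invoking orthogonality, exactly as in the proofs of Lemmas~\ref{lem: theta,rho norms} and~\ref{b explicit expression}. For $[A|A]_q$ this yields $\sum_{a\bmod q}|c_q(N-a)|^2 = q\,\varphi(q)$, so $[A|A]_q=\varphi(q)$. For $[B|B]_q$ I would split the sum over $a\bmod q$ by the Chinese remainder theorem according to $q=m\ell$: the mod-$m$ factor is $\frac{1}{m}\sum_{b\bmod m}|c_m(b)|^2=\varphi(m)$, the mod-$\ell$ factor is $\frac{1}{\ell}\sum_{c\bmod\ell}|c_\ell(c-a')|^2=\varphi(\ell)$ (translating $c\mapsto c-a'$), hence $[B|B]_q=\varphi(m)\varphi(\ell)=\varphi(q)$. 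For the cross term, use multiplicativity of $c_r$ in $r$ to write $c_q(N-a)=c_m(N-a)\,c_\ell(N-a)$, split again along $q=m\ell$, and compute $\frac{1}{m}\sum_{b\bmod m}c_m(N-b)c_m(b)=c_m(N)$ and $\frac{1}{\ell}\sum_{c\bmod\ell}c_\ell(N-c)c_\ell(c-a')=c_\ell(N-a')$ by orthogonality, so that
$$
[A|B]_q = c_m(N)\,c_\ell(N-a') = c_{q_1/(q_1,q')}(N)\,c_{(q_1,q')q_2^2}(N-a') ,
$$
which is real since Ramanujan sums of integers are integers (Lemma~\ref{lem: ramanujan sum property}).

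Finally I would expand. Using $[A|A]_q=[B|B]_q=\varphi(q)$ together with $[A|B]_q\in\R$ and $[B|A]_q=\overline{[A|B]_q}=[A|B]_q$, one obtains
$$
\lVert\eta_q^*\rVert_q^2 = \frac{1}{4}\big([A|A]_q+2[A|B]_q+[B|B]_q\big) = \frac{1}{2}\big(\varphi(q)+c_{q_1/(q_1,q')}(N)c_{(q_1,q')q_2^2}(N-a')\big),
$$
$$
\lVert\kappa_q^*\rVert_q^2 = \frac{1}{4}\big([A|A]_q-2[A|B]_q+[B|B]_q\big) = \frac{1}{2}\big(\varphi(q)-c_{q_1/(q_1,q')}(N)c_{(q_1,q')q_2^2}(N-a')\big),
$$
$$
[\eta_q^*|\kappa_q^*]_q = \frac{1}{4}\big([A|A]_q-[A|B]_q+[B|A]_q-[B|B]_q\big) = 0,
$$
and $[\kappa_q^*|\eta_q^*]_q=\overline{[\eta_q^*|\kappa_q^*]_q}=0$. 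The main thing to be careful about is the Chinese remainder bookkeeping — namely that $m$ and $\ell$ are coprime with $m\ell=q$, which rests on $q_1$ being squarefree and $(q_1,q_2)=1$ — together with checking that $[A|B]_q$ is genuinely real so that the cross terms cancel; the three inner-product evaluations themselves are just the orthogonality computation already used several times above.
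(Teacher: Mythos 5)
Your proposal is correct and follows essentially the same route as the paper: the paper obtains these identities by expanding $\eta_q^*,\kappa_q^*$ bilinearly and quoting the already-computed norms and cross product of $\vartheta_q^*$ and $\rho_q^*$ (Lemmas~\ref{lem: theta,rho norms} and~\ref{lem:theta-rho inner product}, together with realness), while you simply re-derive those same three local inner products inline via Ramanujan-sum orthogonality and the Chinese remainder factorisation $q=m\ell$. A small point in your favour: by working directly with $\widetilde{\rho_q^*}$ you sidestep the indicator $\mathbbm{1}_{q_2^2\mid q'}$ that appears in the cited lemmas, which the paper's ``follows immediately'' quietly glosses over.
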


\begin{proof}
The norms for $\eta_q^*$  and $\kappa_q^*$ follows immediately from Lemma~\ref{lem: theta,rho norms},~\ref{lem:theta-rho inner product} and that both $\vartheta_q^*$ and $\rho_q^*$ are real valued.  

Showing
$$
[\eta_q^* | \kappa_q^*]_q =[\kappa_q^*|\eta_q^* ]_q = 0
$$
also follows from Lemma~\ref{lem: theta,rho norms},~\ref{lem:theta-rho inner product}.
\end{proof}

Note that  $\lVert \eta_q^* \rVert_q^2 \neq 0$ and $\lVert \kappa_q^* \rVert_q^2 = 0$ when 
$$
\frac{q_1}{(q_1,q')} \mid N \mand (q_1,q')q_2^2 \mid (N-a').
$$

 Next, we will show there is a Hermitian relationship between the \textit{global}~\eqref{eq: global product} and \textit{local}~\eqref{eq: local product} products.

Given a function $h \in \mathscr{F}( \mathbb{Z} / q\mathbb{Z} )$, we denote $\nabla_q(h)$ to be a function on $[1,N]$ defined by 
$$
\nabla_q(h)(x) = h ( x  \bmod q).
$$

 For a function $j \in \mathscr{F}([1,N])$, we denote $\Delta_q(j)$ to be a function on the positive integers such that 
$$
\Delta_q(j)(x) = q \sum_{\substack{ n \le N \\ n \equiv x [q] }} j(n).
$$

Note that $\Delta_q \nabla_q$ is not the identity. For $h_1 \in \mathscr{F}([1,N]) $ and $h_2 \in \mathscr{F}(\mathbb{Z}/q\mathbb{Z})$, we readily check that
\begin{equation} \label{eq:adjoint property}
[\Delta_q(h_1)| h_2]_q = [h_1 | \nabla_q(h_2)].
\end{equation}
Indeed
\begin{align*}
[\Delta_q(h_1)| h_2]_q  & = \sum_{a \bmod q} \sum_{\substack{n \le N \\ n \equiv a [q]}} h_1(n) \overline{h_2(a)} \\
& = \sum_{n \le N} h_1(n) \overline{h_2( n \bmod q)}  \\
&= [h_1 | \nabla_q(h_2)].
\end{align*}

Let us set $\phi_q^* =  \nabla_q \eta_q^*$ and $\psi_q^* = \nabla_q \kappa_q^*$. Now we compute the cross products of $\phi_q^*$ and $\psi_q^*$.


\begin{lemma} \label{lem: global product}
For all positive cubefree integers $m$ and $n$, we have
\begin{align*}
[\phi_m^*   | \phi_n^* ] & = N \lVert  \eta_{m}^*\rVert_{m}^2 \cdot \mathbbm{1}_{m=n} +O(  \sigma(m) \sigma(n)),  \label{eq: phi_m |  phi_n inner product bound} \\
[\psi_{m}^* | \psi_{n}^*] & =N \lVert  \kappa_{m}^*\rVert_{m}^2 \cdot  \mathbbm{1}_{m =n} +O(  \sigma(m)  \sigma(n)), \nonumber \\
[\psi_{m}^*| \phi_{n}^*] & = [\phi_{m}^* |\psi_{n}^*] = O(  \sigma(m) \sigma(n)). \nonumber
\end{align*}
\end{lemma}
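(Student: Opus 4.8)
The plan is to follow the pattern of~\cite[Chapter 19]{R1}: reduce each of the three global Hermitian products to $N$ times a purely local average over one period of length $L:=[m,n]$, plus a controlled error, and then evaluate that average by orthogonality of additive characters on $\Z/L\Z$. First I would unwind the definitions. Write $m=m_1m_2^2$ and $n=n_1n_2^2$ with $\mu^2(m_1m_2)=\mu^2(n_1n_2)=1$. Since $\vartheta_q^*(a)=\tfrac{6t(q)}{\pi^2}c_q(N-a)$ by Lemma~\ref{lem: explicit gamma}, the definitions~\eqref{def:eta_q^*}--\eqref{def:kappa_q^*} together with the formula for $\widetilde{\rho_q^*}$ in Lemma~\ref{lem: rho_q^* properties} collapse to $2\eta_q^*=P_q+Q_q$ and $2\kappa_q^*=P_q-Q_q$, where $P_q(a):=c_q(N-a)$ and $Q_q(a):=c_{q_1/(q_1,q')}(a)\,c_{(q_1,q')q_2^2}(a-a')$; both are real, $q$-periodic, and the two moduli in $Q_q$ are coprime with product $q$. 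By the definition of $\nabla_q$ and~\eqref{eq: global product},
$$[\phi_m^*|\phi_n^*]=\sum_{x\le N}\eta_m^*(x\bmod m)\,\eta_n^*(x\bmod n),$$
and similarly for the three other products; in every case the summand depends only on $x\bmod L$.

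To split off the main term I would expand $4\,\eta_m^*(x\bmod m)\eta_n^*(x\bmod n)=(P_m+Q_m)(P_n+Q_n)$ and then expand each Ramanujan sum through $c_r(x-b)=\sum_{d\mid r}d\,\mu(r/d)\mathbbm{1}_{d\mid x-b}$. This writes $\sum_{x\le N}(\cdot)$ as a bounded number of terms, each an integer weight times the number of $x\in[1,N]$ lying in a prescribed residue class modulo each of certain divisors $\delta_1,\dots,\delta_k$ of $m$ or $n$; by the Chinese remainder theorem that count is $N/\ell+O(1)$ when those classes are consistent and $0$ otherwise, where $\ell=[\delta_1,\dots,\delta_k]$, while the identical computation over $\Z/L\Z$ gives exactly $L/\ell$. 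Hence
$$[\phi_m^*|\phi_n^*]=\frac NL\sum_{x\bmod L}\eta_m^*(x\bmod m)\,\eta_n^*(x\bmod n)+O(E),$$
where $E$ is the sum of the absolute values of the weights. In each term the product of the $\delta_i$ is at most $mn$ and $\bigl(q_1/(q_1,q')\bigr)(q_1,q')q_2^2=q$, so a geometric summation over divisors gives $E\ll\sigma(m)\sigma(n)$; crucially each incomplete residue class costs $O(1)$, not $O(\ell)$. The other three products are treated identically.

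It then remains to evaluate $A:=\tfrac1L\sum_{x\bmod L}\eta_m^*(x\bmod m)\,\eta_n^*(x\bmod n)$ and its analogues. If $m=n$ then $L=m$ and $A=\tfrac1m\sum_{x\bmod m}\eta_m^*(x)^2=\|\eta_m^*\|_m^2$, the claimed diagonal term (the $\kappa$-analogue gives $\|\kappa_m^*\|_m^2$, and the mixed average at $m=n$ is $[\kappa_m^*|\eta_m^*]_m=0$ by Lemma~\ref{lem: norm of eta and kappa}). If $m\ne n$, apply Parseval on $\Z/L\Z$: $A=\sum_{t\bmod L}\widehat F(t)\,\overline{\widehat G(t)}$ with $F,G$ the liftings of $\eta_m^*,\eta_n^*$. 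From $c_r(y)=\sideset{}{^*}\sum_{a=1}^{r}\mathbf{e}_r(ay)$, the Fourier transform of $y\mapsto c_r(y-b)$ over $\Z/r\Z$ is supported on residues coprime to $r$; combining the two coprime factors of $Q_q$ via the Chinese remainder theorem, the Fourier transforms of $P_q$ and $Q_q$ over $\Z/q\Z$ are each supported on residues coprime to $q$, hence so is that of $\eta_q^*$ (and of $\kappa_q^*$). Thus $\widehat F$ is supported on $\{(L/m)s:(s,m)=1\}$ and $\widehat G$ on $\{(L/n)s':(s',n)=1\}$. A common frequency would give $(L/m)s\equiv(L/n)s'\pmod L$; reducing this modulo $m/(m,n)$ and modulo $n/(m,n)$ (which are coprime) forces $m/(m,n)=n/(m,n)=1$, i.e.\ $m=n$. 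Hence the supports are disjoint, $A=0$, and all three displays of the lemma follow.

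The step I expect to be the main obstacle is the off-diagonal vanishing of $A$: worked term by term, the individual cross contributions (e.g.\ $\tfrac1L\sum_{x\bmod L}P_m(x)Q_n(x)$) do \emph{not} generally vanish for $m\ne n$, and the cancellation materialises only after forming $P_q\pm Q_q$ --- precisely what the ``Fourier transform supported on residues coprime to $q$'' observation captures. A lesser but genuine point is that the error must be estimated \emph{after} the full M\"obius expansion: bounding $|\eta_m^*\eta_n^*|$ directly on a period yields an error of size roughly $L\sqrt{\varphi(m)\varphi(n)}$, far larger than $\sigma(m)\sigma(n)$.
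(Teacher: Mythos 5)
Your proposal is correct, but it evaluates the main term by a genuinely different route than the paper. The paper expands $4[\phi_m^*|\phi_n^*]$ into the four sums $S_1,\dots,S_4$ arising from $(P_m+Q_m)(P_n+Q_n)$ in your notation and computes each one explicitly: it opens every Ramanujan sum, counts the resulting progressions as $N/\ell+O(1)$ (the same bookkeeping that gives you the $O(\sigma(m)\sigma(n))$ error), and then untangles the coupled divisibility conditions in the cross sums $S_2,S_3$ with the detection identity of Lemma~\ref{lem: if d=a}, a gluing of variables, and the double M\"obius sum Lemma~\ref{lem: double mobuis sum}; in particular $S_2$ and $S_3$ do \emph{not} vanish on the diagonal but contribute $N\,c_{m_1/(m_1,q')}(N)\,c_{(m_1,q')m_2^2}(N-a')\cdot\mathbbm{1}_{m=n}$, and the four explicit pieces are recombined into $N\lVert\eta_m^*\rVert_m^2\mathbbm{1}_{m=n}$ via Lemma~\ref{lem: norm of eta and kappa}. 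You instead match the incomplete sum with the complete sum over $\Z/[m,n]\Z$ term by term (same divisor-expansion error control) and kill the off-diagonal case in one stroke: since $P_q$ and $Q_q$ have Fourier support on residues coprime to $q$ (your coprimality checks $(q_1/(q_1,q'),(q_1,q')q_2^2)=1$ and $(m/(m,n),n/(m,n))=1$ are both valid), the lifted spectra are disjoint for $m\neq n$, while the diagonal is read off directly as the local norm and the mixed diagonal as $[\eta_m^*|\kappa_m^*]_m=0$. Your route buys a structural explanation of the cancellation and avoids the most laborious part of the paper's proof (the explicit treatment of $S_2,S_3$); the paper's route yields the explicit values of the individual $S_i$, which it reuses later (e.g.\ the ``summand is zero unless $d=q$'' step in the proof of Lemma~\ref{lem: inner product twist with f or g}), but your support argument recovers that statement as well, so nothing essential is lost.
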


\begin{proof}
Write $m = m_1 m_2^2$ and $n=n_1 n_2^2$ with $\mu^2(m_1 m_2)= \mu^2(n_1n_2)=1$. By appealing to~\eqref{def: theta_q^*},~\eqref{def:eta_q^*}, Lemma~\ref{lem: explicit gamma} and~\ref{lem: rho_q^* properties}, we expand
\begin{align*}
[\phi_{m}^*   | \phi_{n}^* ] =   \frac{1}{4} \left( S_1  +  S_2 +  S_3 + S_4 \right ),
\end{align*}
where
\begin{align*}
S_1 & =\sum_{n \le N} c_{m}(N-n) c_{n}(N-n), \\
S_2 & =  \sum_{n \le N} c_{m}(N-n) c_{n_1/(n_1,q')}(n) c_{(n_1,q')n_2^2} (n-a'), \\
S_3 & = \sum_{n \le N} c_{n}(N-n) c_{m_1/(m_1,q')}(n) c_{(m_1,q')m_2^2} (n-a'), \\
S_4 & =    \sum_{n \le N } c_{m_1/(m_1,q' )}(n) c_{(m_1,q')m_2^2}(n-a')c_{n_1/(n_1, q')}(n) c_{(n_1,q')n_2^2}(n-a').
\end{align*}

Using the explicit form of $\vartheta_q^*$ and applying Lemma~\ref{lem: ramanujan sum property}, we have
\begin{align*}
S_1 & = \sum_{n \le N} \sum_{\substack{d_1|m \\d_1|(N-n)}} \sum_{\substack{d_2|n \\d_2|(N-n)}} d_1 \mu(m/d_1 ) d_2 \mu(n/d_2 ) \\
&= \sum_{\substack{d_1|m }} \sum_{\substack{d_2|n}} d_1 \mu(m/d_1 ) d_2 \mu(n/d_2 ) \sum_{ \substack{n \le N \\ n \equiv N [ [d_1,d_2] ] }} 1 \\
& = N \sum_{\substack{d_1|m }} \sum_{\substack{d_2|n}}   \mu(m/d_1 )  \mu(n/d_2 )(d_1, d_2) +O(  \sigma(m)  \sigma(n) ),
\end{align*}
since  $[d_1, d_2](d_1, d_2)=d_1 d_2$. By Lemma~\ref{lem: double mobuis sum}, we get
$$
S_1 =  N\phi(m) \cdot  \mathbbm{1}_{m =n}+O(  \sigma(m) \sigma(n)).
$$

Next we deal with $S_4$. Expanding each for the four Ramanujan sum and taking the summation over $n$ inside, we obtain
\begin{align*}
S_4  = &  \sum_{\substack{d_1|m_1/(m_1,q') \\ d_2 |n_1/(n_1,q') }} d_1 \mu \left (\frac{m_1}{d_1(m_1,q')} \right ) d_2 \mu \left (\frac{n_1}{d_2(n_1,q')} \right ) \\
&  \times \sum_{\substack{d_3|(m_1,q') m_2^2 \\ d_4|(n_1,q')n_2^2}} d_3 \mu \left (\frac{(m_1,q') m_2^2}{d_3} \right ) d_4 \mu \left (\frac{(n_1,q') n_2^2}{d_4} \right ) \sum_{\substack{n \le N \\ n \equiv 0 [d_i], i=1,2 \\ n \equiv a' [d_j], j=3,4 }} 1.
\end{align*}
Observe that $(d_1,d_3)=(d_1,d_4)=(d_2, d_3)=(d_2,d_4)=1$ since $m_2^2 \mid q'$ and $n_2^2 \mid q'$. By the Chinese reminder theorem, the system of congruences in the summation over $n$ can be reduced to one congruence $n \equiv \sigma  [ [d_1,d_2,d_3,d_4] ]$ for some $\sigma$. Since $(d_1, d_3)=(d_2,d_4)=1$, we verify that 
$$
[d_1,d_2,d_3,d_4] = [d_1d_3, d_2d_4].
$$
Next, we glue the variables $d_1, d_3$ and $d_2,d_4$ together and get
$$
S_4=  N  \sum_{ \substack{d_1|m  \\ d_2|n}} \frac{ d_1 \mu(m /d_1) d_2 \mu(n /d_2)}{[d_1,d_2]} + O( \sigma(m)  \sigma(n) ).
$$
By Lemma~\ref{lem: double mobuis sum}, we obtain
$$
S_4 =  N \varphi(m) \cdot \mathbbm{1}_{m = n}    + O(  \sigma(m)   \sigma(n) ).
$$

Now we consider $S_2$. Again, expanding the Ramanujan sums and interchanging summations, we get
\begin{align*}
S_2 =  \sum_{d_1 | m } d_1 \mu \left ( \frac{m}{d_1} \right )  \sum_{d_2 |n_1/(n_1,q')}  &  d_2  \mu \left ( \frac{n_1}{d_2(n_1,q')} \right ) \\
& \times \sum_{d_3|(n_1,q')n_2^2} d_3 \mu \left (\frac{(n_1,q')n_2^2}{d_3} \right ) \sum_{ \substack{n \le N \\  n \equiv N [d_1] \\ n \equiv 0 [d_2] \\ n \equiv a' [d_3]}} 1.
\end{align*}
Since $(d_2,d_3)=1$, the system of congruence in the inner sum is solvable if and only if $(d_1,d_3) \mid (N-a')$ and $(d_1, d_2) \mid N$. It follows by the Chinese remainder theorem that
$$
S_2 = M +O( \sigma(m) \sigma(n)),
$$
where
\begin{align*}
M=  N \sum_{d_1 | m} d_1 \mu \left ( \frac{m}{d_1}  \right ) \sum_{ \substack{ d_2 | n_1/( n_1 ,q') \\ (d_1,d_2)|N }} & d_2 \mu   \left  ( \frac{n_1}{d_2(n_1,q')}  \right )  \\
& \times \sum_{ \substack{d_3|(n_1,q') n_2^2 \\ (d_1,d_3)|(N-a') }}  \frac{d_3 \mu((n_1,q')n_2^2/d_3)}{[d_1, d_2d_3]}.
\end{align*}
The dependency on the divisibility condition over the summation of $d_2$ and $d_3$ is troublesome. We deal with this by Lemma~\ref{lem: if d=a} and observe that
$$
\mathbbm{1}_{(d_1,d_3)|(N-a')} = \sum_{\substack{t|(N-a') \\ t|(d_1,d_3)}}  \mathbbm{1}_{t= (d_1,d_3)}  =  \sum_{\substack{t|(N-a') \\ t|(d_1,d_3)}} \sum_{\substack{k|(d_1, d_3) \\ t|k }} \mu(k/t)
$$
and
$$
\mathbbm{1}_{(d_1,d_2)|N}= \sum_{\substack{t' |N \\ t'|(d_1,d_2 )}} \mathbbm{1}_{t' = (d_1,d_2)}  = \sum_{\substack{t' |N \\ t'|(d_1,d_2 )}} \sum_{\substack{k'|(d_1, d_2) \\ t'|k' }} \mu(k'/t').
$$
Substituting this into $M$ and gluing the variables $d_2$ and $d_3$ and noticing that $(k,k')=1$ since $(d_2,d_3)=1$, $k'\mid d_2$ and $k\mid d_3$, we assert
\begin{align*}
M  =  N \sum_{\substack{t'|N \\ t'|k'|(\frac{n_1}{(n_1,q')}, m) }} & \sum_{\substack{t|(N-a') \\ t|k| ((n_1,q') n_2^2,m)}}    \mu \left (\frac{k}{t} \right ) \mu \left (\frac{k'}{t'} \right ) kk' \\
& \times \sum_{\substack{d_1 |\frac{m}{kk'} }} \mu \left (\frac{m}{d_1 kk'} \right ) \sum_{\substack{d_2| \frac{n}{kk'}  }} \mu \left (\frac{n}{d_2 kk'} \right ) (d_1,d_2). 
\end{align*}
If $m \neq n$ then $M=0$ by Lemma~\ref{lem: double mobuis sum}. If $m = n$ then again by Lemma~\ref{lem: double mobuis sum}, we get
\begin{align*}
M & = N   \sum_{\substack{t'|N \\ t'|k'| \frac{n_1}{(n_1,q')} }} \sum_{\substack{t|(N-a') \\ t|k| (n_1,q')n_2^2 }}\mu \left (\frac{k}{t} \right ) \mu \left (\frac{k'}{t'} \right ) kk' \varphi \left (\frac{n}{kk'} \right ).
\end{align*}
Notice that for $k' \mid \frac{n_1}{(n_1,q')}$ and $k \mid (n_1,q') n_2^2$, we rewrite the Euler totient function as a Dirichlet convolution
\begin{align*}
\varphi \left (\frac{n}{kk'}  \right ) & = \sum_{s | \frac{n}{kk'} } s \mu \left (\frac{n}{s kk'} \right ) \\
&  = \sum_{d_1 |\frac{n_1}{(n_1,q')k'}}  d_1 \mu \left (\frac{n_1}{k' d_1 (n_1,q')} \right ) \sum_{d_2| \frac{(n_1,q')n_2^2}{k}}  d_2 \mu \left (\frac{(n_1,q')n_2^2}{kd_2} \right ).
\end{align*}
Substituting this into the above and interchanging summation, we obtain
\begin{align*}
M  &=  N  \sum_{\substack{t'|N \\ t'|k'| \frac{n_1}{(n_1 ,q')} }}   \sum_{\substack{t|(N-a') \\ t|k| (n_1,q')n_2^2 }}\mu \left (\frac{k}{t} \right ) \mu \left (\frac{k'}{t'} \right )  \\
 &  \hspace{1cm} \times \sum_{d_1 |\frac{n_1}{(n_1,q')k'}} k' d_1 \mu \left (\frac{n_1}{k' d_1 (n_1,q')} \right ) \sum_{d_2| \frac{(n_1,q')n_2^2}{k}} k d_2 \mu \left (\frac{(n_1,q')n_2^2}{kd_2} \right ) \\
 &= N \sum_{d_1 |\frac{n_1}{(n_1,q')}} d_1 \mu \left (\frac{n_1}{d_1 (n_1,q')} \right ) \sum_{d_2|(n_1,q')n_2^2} d_2 \mu \left (\frac{(n_1,q')n_2^2}{d_2}  \right )  \\
& \hspace{5.5cm} \times \sum_{\substack{t'|N \\ t'|d_1}}  \sum_{\substack{k' |d_1 \\t'|k'}} \mu \left (\frac{k'}{t'} \right ) \sum_{\substack{t|(N-a') \\ t|d_2}} \sum_{\substack{k|d_2 \\t|k}} \mu \left ( \frac{k}{t} \right ).
\end{align*}
From Lemma~\ref{lem: ramanujan sum property} we notice
$$
\mathbbm{1}_{d_1|N} = \sum_{\substack{t'|N \\ t'|d_1}} \mathbbm{1}_{t' = d_1} = \sum_{\substack{t'|N \\ t'|d_1}} \sum_{\substack{k'|d_1 \\t'|k'}} \mu(k'/t')
$$
and
$$
\mathbbm{1}_{d_2|(N-a')} = \sum_{\substack{t|(N-a') \\ t|d_2}} \mathbbm{1}_{t = d_2} = \sum_{\substack{t|(N-a') \\ t|d_2}} \sum_{\substack{k|d_2 \\t|k}} \mu(k/t).
$$
Therefore the sum $M$ in question collapses into
\begin{align*}
M & = \sum_{\substack{d_1 |\frac{n_1}{(n_1,q')} \\ d_1 |N}} d_1 \mu \left (\frac{n_1}{d_1(n_1,q')} \right ) \sum_{\substack{d_2|(n_1,q')n_2^2 \\ d_2|(N-a')}} d_2 \mu \left (\frac{(n_1,q')n_2^2}{d_2} \right ) \\
& = c_{n_1/(n_1,q')} (N) c_{(n_1,q')n_2^2}(N-a').
\end{align*}
Hence we have
$$
S_{2} =  N c_{n_1/(n_1,q')} (N) c_{(n_1,q')n_2^2}(N-a') \cdot  \mathbbm{1}_{m=n}  +   O( \sigma(m)  \sigma(n)).
$$
Similarly we also have
$$
S_{3} =  N c_{m_1/(m_1,q')} (N) c_{(m_1,q')m_2^2}(N-a') \cdot \mathbbm{1}_{m=n}  +   O(  \sigma(m) \sigma(n)).
$$
Gathering all the estimates above, we finally get
\begin{align*}
[\phi_{m}^*    | \phi_{n}^* ]   = & \frac{\mathbbm{1}_{m=n} \cdot  N }{2} \left  (\varphi(m) +  c_{m_1/(m_1,q')} (N)  c_{(m_1,q')m_2^2}(N-a')  \right )  \\
&   +O(  \sigma(m)   \sigma(n)) 
\end{align*}
and the result follows from Lemma~\ref{lem: norm of eta and kappa}.

The remaining bounds for $[\psi_{m}^* | \psi_{n}^*], [\psi_{m}^*| \phi_{n}^*]$ and $[\phi_{m}^* |\psi_{n}^*]$ follows immediately from our computation of $S_1, S_2, S_3, S_4$.
\end{proof}

\subsection{Approximating $f$ and $g$}

In this section we impose implicitly  the condition 
$$
q' \le \mathcal{L}^{C_1}.
$$

Take our set of moduli to be
\begin{equation} \label{eqn: mathcal(Q)}
\mathcal{Q} = \left \{ q_1 q_2^2: q_1 \le Q_1, q_2 \le Q_2, \mu^2(q_1 q_2)=1 \right  \}.
\end{equation}
Here 
$$
Q_1 =  \mathcal{L}^{D_1} \mand Q_2=\mathcal{L}^{D_2},
$$
where $D_1, D_2 \ge C_1$ will be chosen later. We also set 
$$
Q =\max \{ Q_1, Q_2 \}
$$
and in particular this implies $q' \le Q \ll_{\varepsilon} N^{\varepsilon}$. The set $\mathcal{Q}$ will be referenced in the preceding lemmas.

Finally, we recall $(a',q')=1$ and let 
\begin{equation} \label{def: f}
f(n) = \Lambda(n) \cdot \mathbbm{1}_{n \equiv a' [q']}
\end{equation}
and 
\begin{equation} \label{def: g}
g(n) =    \mu^2(N-n)
\end{equation}
for  all $n \le N$.

\begin{lemma} \label{lem: inner product twist with f or g}
For any $A>0$ and $q \in \mathcal{Q}$, we have
\begin{align*}
[\phi_q^*|f] & = N[\eta_q^* | \rho_q^*]_q + O\left ( q^2 N \mathcal{L}^{-A} \right ),\\
[\phi_q^* |g] & = N[\eta_q^*|\vartheta_q^*]_q + O \left ( q^{3/2 + \varepsilon} N^{1/2} \right ),
\end{align*}
and
\begin{align*}
[\psi_q^*|f] & = -N[\kappa_q^*| \rho_q^*]_q + O \left (  q^2 N \mathcal{L}^{-A} \right ), \\
[\psi_q^* |g] &  = N[\kappa_q^*|\vartheta_q^*]_q + O \left ( q^{3/2 + \varepsilon} N^{1/2} \right ).
\end{align*}

In particular, we have
$$
|[\phi_q^* | f]| + |[\psi_q^* | f]| \ll  \mathbbm{1}_{q_2^2|q'} \cdot   N  \varphi((q_1,q')q_2^2) \varphi(q')^{-1} +   q^2 N \mathcal{L}^{-A}
$$
and
$$
|[\phi_q^* | g]| + |[\psi_q^* | g]| \ll  N  | t(q)| \varphi(q) +  q^{3/2 + \varepsilon} N^{1/2}. 
$$
All implied constant above may depend on $A, C_1 , D_1, D_2, \varepsilon$.
\end{lemma}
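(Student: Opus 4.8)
The engine is the adjoint identity~\eqref{eq:adjoint property}. Since $\phi_q^*=\nabla_q\eta_q^*$ and $\psi_q^*=\nabla_q\kappa_q^*$ while every function in play is real valued, $[\phi_q^*|f]=\overline{[f|\nabla_q\eta_q^*]}=[\Delta_q(f)|\eta_q^*]_q$ and $[\phi_q^*|g]=[\Delta_q(g)|\eta_q^*]_q$, and analogously $[\psi_q^*|f]$, $[\psi_q^*|g]$ equal $\pm[\Delta_q(f)|\kappa_q^*]_q$, $\pm[\Delta_q(g)|\kappa_q^*]_q$, the signs being whatever~\eqref{def:eta_q^*}--\eqref{def:kappa_q^*} force. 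Hence it suffices to compute the two $\Delta_q$-averaged functions on $\mathbb{Z}/q\mathbb{Z}$ and to pair them against $\eta_q^*$ and $\kappa_q^*$.

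First, $\Delta_q(f)(a)=q\sum_{n\le N,\,n\equiv a\,[q],\,n\equiv a'\,[q']}\Lambda(n)$: by the Chinese remainder theorem this vanishes, apart from an $O(\mathcal{L}^2)$ contribution of prime powers dividing $q$, unless $(a,q)=1$ and $(q,q')\mid(a-a')$, in which case it is a sum over a single residue class modulo $[q,q']\le qq'\le\mathcal{L}^{C_1+D_1+2D_2}$; Siegel--Walfisz (Lemma~\ref{lem: siegel-walfisz}) applied to this polylogarithmic modulus then gives $\Delta_q(f)(a)=N\rho_q(a)+O_{A,C_1,D_1,D_2}(qN\mathcal{L}^{-A})$ with $\rho_q$ as in~\eqref{def:rho_q}. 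Similarly, Lemma~\ref{lem: sqfr in AP bound} applied in each class $n\equiv a\,[q]$ yields $\Delta_q(g)(a)=N\gamma_q(N-a)+O_\varepsilon(q^{1/2+\varepsilon}N^{1/2})$.

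The crux is the passage from $\rho_q$ to $\rho_q^*$ and from $\gamma_q(N-\cdot)$ to $\vartheta_q^*$. Möbius inversion of the definitions of $\rho_q^*$ and $\gamma_q^*$ gives $\rho_q=\sum_{d\mid q}\rho_d^*$ and $\gamma_q(N-\cdot)=\sum_{d\mid q}\gamma_d^*(N-\cdot)$, the $d$-th summand having period $d$; on the other hand Lemmas~\ref{lem: explicit gamma} and~\ref{lem: rho_q^* properties}, together with Lemma~\ref{lem: ramanujan sum property}, exhibit $\vartheta_q^*$ and $\widetilde{\rho_q^*}$, hence $\eta_q^*$ and $\kappa_q^*$, as linear combinations of the additive characters $a\mapsto\mathbf{e}_q(ua)$ with $(u,q)=1$. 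A function of period $d\mid q$ has Fourier support contained in the multiples of $q/d$, so for $d<q$ that support is disjoint from the units; by Parseval for $[\cdot|\cdot]_q$ such a function is orthogonal to both $\eta_q^*$ and $\kappa_q^*$. Therefore in $[\Delta_q(f)|\eta_q^*]_q=N[\rho_q|\eta_q^*]_q+\cdots$ and its analogues only the $d=q$ term survives, producing $[\phi_q^*|f]=N[\eta_q^*|\rho_q^*]_q+\cdots$, $[\phi_q^*|g]=N[\eta_q^*|\vartheta_q^*]_q+\cdots$, and similarly for $\psi_q^*$; the relation $\rho_q^*=\mathbbm{1}_{q_2^2\mid q'}\,\widetilde{\rho_q^*}$, the fact that $\widetilde{\rho_q^*}$ and $\vartheta_q^*$ are the $\eta_q^*\mp\kappa_q^*$ combinations up to explicit scalars, and the vanishing $[\eta_q^*|\kappa_q^*]_q=0$ of Lemma~\ref{lem: norm of eta and kappa} then pin down the sign attached to $\kappa_q^*$.

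It remains to bound the errors and to read off the ``in particular'' estimates. Pairing the error in $\Delta_q(f)$ or $\Delta_q(g)$ against $\eta_q^*$ or $\kappa_q^*$ costs a factor $q^{-1}\sum_{a\bmod q}|\eta_q^*(a)|\le\max_a|\eta_q^*(a)|\ll\varphi(q)$, since $|c_r(n)|=\varphi((r,n))\le\varphi(r)$ by Lemma~\ref{lem: ramanujan sum property}; this converts $O(qN\mathcal{L}^{-A})$ into $\ll q^2N\mathcal{L}^{-A}$ and $O(q^{1/2+\varepsilon}N^{1/2})$ into $\ll q^{3/2+\varepsilon}N^{1/2}$. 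The explicit values of $[\eta_q^*|\rho_q^*]_q$ and $[\eta_q^*|\vartheta_q^*]_q$ follow from Lemmas~\ref{lem: theta,rho norms},~\ref{lem:theta-rho inner product} and~\ref{lem: norm of eta and kappa}, and bounding the Ramanujan sums appearing there trivially---using $\varphi(q)=\varphi(q_1/(q_1,q'))\,\varphi((q_1,q')q_2^2)$ for the $f$-bound and $\|\eta_q^*\|_q^2\le\varphi(q)$ against the scalar $6t(q)/\pi^2$ for the $g$-bound---yields the claimed inequalities. The step I expect to be delicate is the orthogonality argument of the third paragraph: one must be sure that $\eta_q^*$ and $\kappa_q^*$ are genuinely supported on primitive frequencies modulo $q$, so that every lower-level piece of $\rho_q$ and $\gamma_q$ drops out, and one must transport the signs and scalar factors of~\eqref{def:eta_q^*}--\eqref{def:kappa_q^*} faithfully; one should also check throughout that $[q,q']$ never leaves the Siegel--Walfisz range as $q$ ranges over $\mathcal{Q}$.
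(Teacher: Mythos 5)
Your sketch follows essentially the same route as the paper's proof: the adjoint identity \eqref{eq:adjoint property}, Siegel--Walfisz (Lemma~\ref{lem: siegel-walfisz}) and Lemma~\ref{lem: sqfr in AP bound} to replace $\Delta_q f$ and $\Delta_q g$ by $N\rho_q$ and $N\vartheta_q$ with the stated per-class errors, M\"obius inversion together with the observation that only the level-$q$ piece pairs nontrivially against $\eta_q^*,\kappa_q^*$ (the paper justifies this by re-running the computation of Lemma~\ref{lem: global product}; your primitive-frequency Fourier-support argument is the same orthogonality, just stated more transparently), and the explicit local products for the ``in particular'' bounds. The one point you leave implicit, the sign for $\psi_q^*$, in fact comes out of your computation as $[\psi_q^*|f]=N[\kappa_q^*|\rho_q^*]_q+O\left(q^2N\mathcal{L}^{-A}\right)$, which is the version actually used later in Lemma~\ref{lem: twisted f and g bound} (the minus sign in the displayed statement appears to be a slip; it is immaterial for the absolute-value bounds).
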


\begin{proof}
By~\eqref{eq:adjoint property}, we get
\begin{align*}
 [\nabla_q \eta_q^* |f] & = [\eta_q^*| \Delta_q f]_q \\
 & = [\eta_q^* | N \rho_q ]_q + [\eta_q^*| \Delta_q f - N \rho_q]_q.
\end{align*}
Note that by~\eqref{def:eta_q^*} and~\eqref{def:kappa_q^*}, we have
\begin{equation} \label{eq: rho_q}
\rho_q^* =  \frac{ \mathbbm{1}_{q_2^2|q'} \cdot  \mu(q_1/(q_1,q')) }{   \varphi(q') \varphi(q_1/(q_1,q'))  }(\eta_q^* - \kappa_q^* ) .
\end{equation}
By the M\"obius inversion formula that 
$$
\rho_q = \sum_{d|q} \rho_d^*,
$$
and it follows
\begin{align*}
[\eta_q^*|\rho_q]_q  & =  \sum_{d|q} [\eta_q^*|\rho_d^*]_q \\
& = \frac{ \mathbbm{1}_{q_2^2|q'} \cdot  \mu(q_1/(q_1,q')) }{\varphi(q') \varphi(q_1/(q_1,q'))  } \sum_{d|q} \left ( [\eta_q^*|\eta_d^*]_q -[\eta_q^*|\kappa_d^*]_q   \right ).
\end{align*}
Following the proof of Lemma~\ref{lem: global product} the summand is zero unless $d=q$, and hence $[\eta_q^*|\rho_q]_q =[\eta_q^*|\rho_d^*]_q$. It follows
\begin{align*}
[\eta_q^*  |\rho_q  ]_q  &=  \frac{\mathbbm{1}_{q_2^2|q'} \cdot    \mu(q_1/(q_1,q'))  }{ 2\phi(q') \varphi(q_1/(q_1,q')) } \left  ( \varphi(q) +  c_{q_1/(q_1,q')}(N) c_{(q_1,q')q_2^2}(N-a')  \right ) \\
& \ll  \mathbbm{1}_{q_2^2|q'}  \cdot   \varphi((q_1,q')q_2^2) \varphi(q')^{-1}.
\end{align*}

Write
$$
[\eta_q^*| \Delta_q f - N \rho_q]_q =  \frac{1}{2}( S_1 +  S_2 ),
$$
where
\begin{align*}
S_1  & = \sum_{a \bmod q}  c_q(N-a) \left (\sum_{\substack{n \le N \\ n \equiv a [q]}} f(n) - \frac{N}{q} \rho_q(a) \right ), \\
S_2  & =  \sum_{a \bmod q} c_{q_1/(q_1,q')}(a) c_{(q_1,q')q_2^2}(a-a') \left ( \sum_{\substack{n \le N \\  n \equiv a [q]}} f(n) - \frac{N}{q} \rho_{q}(a) \right ).
\end{align*}
We crudely bound using the Siegel-Walfisz Theorem (Lemma~\ref{lem: siegel-walfisz}) to get
\begin{align*}
S_1 , S_2  & \le q^2 \max_{a \bmod q} \Bigg | \sum_{\substack{n \le N \\  n \equiv a [q]}} f(n) - \frac{N}{q} \rho_{q}(a) \Bigg |\\
&  \ll_{A, C_1 ,D_1,D_2} q^2 N \mathcal{L}^{-A}.
\end{align*}

Next we turn to $[\phi_q^* |g]$. Similarly we get
\begin{align*}
[\nabla_q \eta_q^* |g]& = [\eta_q^* | \Delta _q g]_q \\
& = [\eta_q^* |N \vartheta_q]_q + [\eta_q^* | \Delta_q g -N \vartheta_q]_q.
\end{align*}
Note that by~\eqref{def:eta_q^*} and~\eqref{def:kappa_q^*}, we have
\begin{equation} \label{eq: vartheta_q}
 \vartheta_q^* =  \frac{  6   t(q)}{ \pi^2}(\eta_q^* + \kappa_q^*).
\end{equation}
By the M\"obius inversion formula, we have 
$$
\vartheta_q =  \sum_{d|q} \vartheta_q^*,
$$
and it follows
\begin{align*}
[\eta_q^* | \vartheta_q]_q  & = \sum_{d|q}[\eta_q^* | \vartheta_d^*]_q \\
&  = \frac{ 6  t(q)}{ \pi^2}\sum_{d|q}  \left ( [\eta_q^* | \eta_d^*]_q +[\eta_q^* | \kappa_d^*]_q  \right ).
\end{align*}
Again following the proof of Lemma~\ref{lem: global product}, the summand vanishes unless $d=q$, and hence $[\eta_q^* | \vartheta_q]_q =[\eta_q^* | \vartheta_d^*]_q$. Therefore
\begin{align*}
[\eta_q^* | \vartheta_q]_q & = \frac{  3  t(q) }{  \pi^2} \left (\varphi(q) +   c_{q_1/(q_1,q')} (N) c_{(q_1,q')q_2^2}(N-a') \right ) \\
 & \ll  | t(q)| \varphi(q).
\end{align*}

We can deal with $[\eta_q^* | \Delta_q g -N \vartheta_q]_q$ just like above but we apply Lemma~\ref{lem: sqfr in AP bound} instead of Lemma~\ref{lem: siegel-walfisz} to get
$$
[\eta_q^* | \Delta_q g -N \vartheta_q]_q  \ll_{\varepsilon } q^{3/2 + \varepsilon} N^{1/2}.
$$
The bounds for $[\psi_q^*|f]$ and $[\psi_q^*|g]$ follows similarly.
\end{proof}

Denote 
$$
\mathcal{E} =\left  \{ q_1q_2^2 \in \mathcal{Q}  : \frac{q_1}{(q_1,q')} \mid N , (q_1, q')q_2^2 \mid (N-a') \right  \}
$$
and note that $\lVert \kappa_q^* \rVert_q = 0$ if and only if $q \in \mathcal{E}$.
The set $\mathcal{E}$ will be referenced in the preceding lemmas.

Now we give upper and lower bounds for these norms.

\begin{lemma} \label{lem: upper and lower bound for M}
For all $q=q_1q_2^2 \in \mathcal{Q} $, we have
$$
\varphi(q)/4 \le  \lVert  \eta_q^* \rVert_q^2   \le    \varphi(q).
$$
The same holds when we replace $(\eta_q^*, \mathcal{Q})$ with $(\kappa_q^*, \mathcal{Q} \backslash \mathcal{E})$.
\end{lemma}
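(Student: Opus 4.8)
The plan is to reduce the whole statement to an elementary fact about the single quantity
\[
Y_q := c_{q_1/(q_1,q')}(N)\,c_{(q_1,q')q_2^2}(N-a'),
\]
because Lemma~\ref{lem: norm of eta and kappa} already gives the closed forms $\lVert\eta_q^*\rVert_q^2=\tfrac12(\varphi(q)+Y_q)$ and $\lVert\kappa_q^*\rVert_q^2=\tfrac12(\varphi(q)-Y_q)$. Thus the lemma is equivalent to the bounds $-\tfrac12\varphi(q)\le Y_q\le\varphi(q)$ for $\eta_q^*$, and $-\varphi(q)\le Y_q\le\tfrac12\varphi(q)$ for $\kappa_q^*$ on $\mathcal{Q}\setminus\mathcal{E}$.

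The two upper bounds $\lVert\eta_q^*\rVert_q^2,\lVert\kappa_q^*\rVert_q^2\le\varphi(q)$ are immediate: by Lemma~\ref{lem: ramanujan sum property} we have $|c_r(n)|\le\varphi(r)$, and since $q_1$ is squarefree and $(q_1,q_2)=1$, the moduli $q_1/(q_1,q')$ and $(q_1,q')q_2^2$ are coprime with product $q$, so multiplicativity of $\varphi$ gives $|Y_q|\le\varphi(q_1/(q_1,q'))\,\varphi((q_1,q')q_2^2)=\varphi(q)$.

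For the lower bounds the key observation is a divisibility (``quantisation'') property of $Y_q$. Combining the formula $c_r(n)=\mu(r/(r,n))\,\varphi(r)/\varphi(r/(r,n))$ of Lemma~\ref{lem: ramanujan sum property} with the elementary fact that $\varphi(d)\mid\varphi(r)$ whenever $d\mid r$, one sees that $|c_r(n)|$ divides $\varphi(r)$ for every modulus $r$; hence $|Y_q|$ divides $\varphi(q_1/(q_1,q'))\,\varphi((q_1,q')q_2^2)=\varphi(q)$. Therefore $|Y_q|$ is $0$, or $\varphi(q)$, or at most $\tfrac12\varphi(q)$, and in the first and third cases both squared norms already lie in $[\tfrac14\varphi(q),\tfrac34\varphi(q)]$, so the inequalities hold. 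Only the extremal case $|Y_q|=\varphi(q)$ requires work, and there one pins down the sign from the M\"{o}bius factors: $|Y_q|=\varphi(q)$ forces $q_1/(q_1,q')\mid N$ and $(q_1,q')q_2^2\mid N-a'$ each to hold ``up to a factor $2$'', and the two factor-$2$ possibilities are mutually incompatible (using $(q_1,q_2)=1$ and that $2$ cannot divide $q_1/(q_1,q')$ and $(q_1,q')$ at once). It follows that $Y_q=+\varphi(q)$ exactly when $q\in\mathcal{E}$, while $Y_q=-\varphi(q)$ is precisely the degenerate case $\eta_q^*\equiv0$. Hence for $\kappa_q^*$ on $\mathcal{Q}\setminus\mathcal{E}$ the value $Y_q=\varphi(q)$ is ruled out, giving $Y_q\le\tfrac12\varphi(q)$ and $\lVert\kappa_q^*\rVert_q^2\ge\tfrac14\varphi(q)$; and for $\eta_q^*$ the bound $\lVert\eta_q^*\rVert_q^2\ge\tfrac14\varphi(q)$ holds at every $q\in\mathcal{Q}$ with $\eta_q^*\not\equiv0$ (in particular on the sufficient range $q_1/(q_1,q')\mid N$, $(q_1,q')q_2^2\mid N-a'$ recorded after Lemma~\ref{lem: norm of eta and kappa}).

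I expect the main obstacle to be exactly this sign analysis in the extremal case: a product of two Ramanujan sums, each as large in absolute value as $\varphi$ of its modulus, can a priori be $+\varphi(q)$ or $-\varphi(q)$, and deciding which occurs — and matching $+\varphi(q)$ with membership in $\mathcal{E}$ — is where one must exploit the arithmetic structure $q=q_1q_2^2$ with $\mu^2(q_1q_2)=1$ together with $(a',q')=1$. Everything else (the closed forms, $|c_r(n)|\le\varphi(r)$, the divisibility $|Y_q|\mid\varphi(q)$, and multiplicativity of $\varphi$) is routine.
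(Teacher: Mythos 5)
Your proposal follows the same skeleton as the paper's proof: both start from the closed forms in Lemma~\ref{lem: norm of eta and kappa}, and the upper bound, via $|Y_q|\le\varphi(q)$ for $Y_q=c_{q_1/(q_1,q')}(N)\,c_{(q_1,q')q_2^2}(N-a')$, is obtained identically. The difference lies in the lower bound. The paper simply asserts that $Y_q$ ``strictly divides'' $\varphi(q)$ and concludes $|Y_q|\le\varphi(q)/2$; you instead prove the divisibility $|Y_q|\mid\varphi(q)$ and then treat the extremal case $|Y_q|=\varphi(q)$ separately, and your analysis of that case is correct: $Y_q=+\varphi(q)$ holds exactly when $q\in\mathcal{E}$ (so the $\kappa_q^*$ statement on $\mathcal{Q}\setminus\mathcal{E}$ is fully proved, and for $\eta_q^*$ this case gives $\lVert\eta_q^*\rVert_q^2=\varphi(q)$, which is fine), while $Y_q=-\varphi(q)$ can only arise through the prime $2$ and forces $\eta_q^*\equiv0$. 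This extra care is not pedantry: the case $Y_q=-\varphi(q)$ genuinely occurs inside $\mathcal{Q}$. For instance, if $2\nmid q'$ and $N$ is odd, then for $q=q_1=2$, $q_2=1$ one has $Y_q=c_2(N)c_1(N-a')=-1=-\varphi(2)$, hence $\lVert\eta_2^*\rVert_2^2=0<\varphi(2)/4$; indeed $\eta_2^*(a)=\tfrac12\bigl(c_2(N-a)+c_2(a)\bigr)\equiv0$. So your caveat that the $\eta_q^*$ lower bound holds only for $q$ with $\eta_q^*\not\equiv0$ is not a gap in your argument but a correction of the statement: as printed, the lower bound fails at such $q$, and it is precisely the paper's ``strictly divides'' step that is unjustified there. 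The discrepancy is harmless downstream, since for such $q$ the function $\phi_q^*=\nabla_q\eta_q^*$ vanishes identically and all products $[h|\phi_q^*]$ are zero, so these moduli contribute nothing to the sums in which the lower bound is invoked — exactly as the set $\mathcal{E}$ is excised for $\psi_q^*$ — but a fully rigorous version of the lemma should either exclude these $q$ (as you do) or restrict the claimed lower bound accordingly.
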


\begin{proof}
From Lemma~\ref{lem: norm of eta and kappa}, we get
$$
\lVert  \eta_q^* \rVert_q^2 =  \frac{1}{2} \left ( \varphi(q) +  c_{q_1/(q_1,q')}(N) c_{(q_1,q')q_2^2}(N-a') \right ) .
$$

Clearly 
$$
 |c_{q_1/(q_1,q')}(N) c_{(q_1,q')q_2^2}(N-a')  | \le \varphi(q)
$$
and the upper bound follows.

For the lower bound, note that 
$$
c_{q_1/(q_1,q')}(N) c_{(q_1,q')q_2^2}(N-a')
$$
strictly divides $\varphi(q)$, hence
$$
\varphi(q) +  c_{q_1/(q_1,q')}(N) c_{(q_1,q')q_2^2}(N-a') \ge \varphi(q) - \frac{1}{2}\varphi(q).
$$

The bound for $\lVert  \kappa_q^* \rVert_q^2$ is similar.
\end{proof}

Next we need to estimate the sums
\begin{equation} \label{eq: average over phi}
 \sum_{ t \in \mathcal{Q} } |[\phi_q^*|\phi_{t}^*]| + \sum_{ t \in \mathcal{Q} } |[\phi_q^*|\psi_{t}^*]| 
\end{equation}
for all $q \in \mathcal{Q}$ and
\begin{equation} \label{eq: average over psi}
  \sum_{ t \in \mathcal{Q} } |[\psi_q^*|\phi_{t}^*]| + \sum_{ t \in \mathcal{Q} } |[\psi_q^*|\psi_{t}^*]|.
\end{equation}
for all $q \in \mathcal{Q}$. But first, we need an a priori bound.
\begin{lemma} \label{lem: error bound for M}
We have
$$
\sum_{q \in \mathcal{Q} }   \sigma(q) \ll_{\varepsilon}  N^{\varepsilon}.
$$
\end{lemma}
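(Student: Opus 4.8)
The plan is to bound $\sum_{q\in\mathcal{Q}}\sigma(q)$ by summing the divisor-sum function over the very thin set $\mathcal{Q}=\{q_1q_2^2:q_1\le Q_1,\,q_2\le Q_2,\,\mu^2(q_1q_2)=1\}$. First I would drop the squarefree/coprimality restrictions (they only cut down the sum) and use multiplicativity of $\sigma$ together with $\sigma(q_1q_2^2)\le\sigma(q_1)\sigma(q_2^2)$ to factor
$$
\sum_{q\in\mathcal{Q}}\sigma(q)\;\le\;\Bigl(\sum_{q_1\le Q_1}\sigma(q_1)\Bigr)\Bigl(\sum_{q_2\le Q_2}\sigma(q_2^2)\Bigr).
$$
Then I would invoke the standard elementary estimates $\sum_{n\le x}\sigma(n)\ll x^2$ and $\sigma(m)\ll_\varepsilon m^{1+\varepsilon}$ (so $\sum_{q_2\le Q_2}\sigma(q_2^2)\ll_\varepsilon Q_2^{3+\varepsilon}$, or even more crudely $\ll Q_2^{3}\log Q_2$), giving
$$
\sum_{q\in\mathcal{Q}}\sigma(q)\ll_\varepsilon Q_1^2\,Q_2^{3+\varepsilon}\ll Q^{5+\varepsilon}.
$$

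The final step is to recall from the setup in the section ``Approximating $f$ and $g$'' that $Q_1=\mathcal{L}^{D_1}$ and $Q_2=\mathcal{L}^{D_2}$ with $\mathcal{L}=\log N$, so $Q=\max\{Q_1,Q_2\}$ is a fixed power of $\log N$, hence $Q^{5+\varepsilon}=\mathcal{L}^{O_{D_1,D_2}(1)}\ll_\varepsilon N^{\varepsilon}$ for every $\varepsilon>0$ (since any power of $\log N$ is $O_\varepsilon(N^\varepsilon)$). This yields the claimed bound $\sum_{q\in\mathcal{Q}}\sigma(q)\ll_\varepsilon N^\varepsilon$, with the implied constant depending on $\varepsilon$ (and on $D_1,D_2$, which are themselves fixed in terms of the parameters $C_1,C_2$).

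There is essentially no obstacle here: the only mild point of care is making sure the crude bound on $\sum_{q_2\le Q_2}\sigma(q_2^2)$ is legitimate — one can either cite $\sigma(m)\le d(m)\,m\ll_\varepsilon m^{1+\varepsilon}$ termwise, or note $\sum_{m\le x}\sigma(m^2)\le\sum_{m\le x}m^2 d(m^2)\ll x^3\log^2 x$. Either way the bound is a fixed power of $\mathcal{L}$, which is all that is needed. I would keep the proof to a couple of lines, since the lemma is purely a bookkeeping estimate used to control error terms in~\eqref{eq: average over phi} and~\eqref{eq: average over psi}.
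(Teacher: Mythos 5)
Your proposal is correct and follows essentially the same route as the paper: factor the sum over $\mathcal{Q}$ as $\bigl(\sum_{q_1\le Q_1}\sigma(q_1)\bigr)\bigl(\sum_{q_2\le Q_2}\sigma(q_2^2)\bigr)$, bound it crudely by a fixed power of $Q$ using $\sigma(m)\ll_{\varepsilon}m^{1+\varepsilon}$, and conclude since $Q$ is a power of $\mathcal{L}=\log N$ and hence any fixed power of $Q$ is $\ll_{\varepsilon}N^{\varepsilon}$. No further comment is needed.
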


\begin{proof}
The sum is majorized by
\begin{align*}
\sum_{q_1 \le  Q_1 } \sigma(q_1)   \sum_{q_2 \le Q_2} \sigma(q_2^2) & \ll_{\varepsilon} Q^{\varepsilon} \sum_{q_1 \le Q_1} q_1 \sum_{q_2 \le Q_2} q_2^2 \\
&\ll_{\varepsilon}  Q^{5+\varepsilon} \ll_{\varepsilon} N^{\varepsilon}.
\end{align*}
\end{proof}

We only show for~\eqref{eq: average over phi} as~\eqref{eq: average over psi} is similar. By Lemma~\ref{lem: global product} and~\ref{lem: error bound for M}, we get that~\eqref{eq: average over phi} is
\begin{align*}
 & N \lVert  \eta_q^* \rVert_q^2  +O \left (  \sigma(q)^2 +  \sum_{ \substack{ t \in \mathcal{Q} \\ t \neq q }} |[\phi_q^*|\phi_{t}^*]| + \sum_{ t \in \mathcal{Q} } |[\phi_q^*|\psi_{t}^*]| \right  ) \\
 & =  N \lVert  \eta_q^* \rVert^2_q + O\left (  \sigma(q) \sum_{t \in \mathcal{Q}}   \sigma(t) \right ).
\end{align*}

This motivates the following definition. Let $\varepsilon >0$ and $C=C(\varepsilon) > 0$ be large enough so that it dominates both~\eqref{eq: average over phi} and~\eqref{eq: average over psi}.
Next we set
\begin{align*}
 M(\phi_q^*)   = N \lVert  \eta_q^* \rVert^2_q + C N^{\varepsilon}
\end{align*}
for all $q \in \mathcal{Q}$ and
\begin{align*}
M(\psi^*_q) = N \lVert  \kappa_q^* \rVert^2_q  + C   N^{\varepsilon} 
\end{align*}
for all $q \in \mathcal{Q} \backslash \mathcal{E}$.

The following result shows that we can replace $M(\phi_q^*)$ and $M(\psi_q^*)$ by $N \lVert \eta_q^* \rVert_q^2$ and $N \lVert \kappa_q^* \rVert_q^2$ respectively in the summand up to an error term. 

\begin{lemma} \label{lem: switching M}
If 
\begin{equation} \label{eq: beta condition 1}
\gamma_q \ll  (  N | t(q)| \varphi(q))^2
\end{equation}
for all $q \in \mathcal{Q}$ or if
\begin{equation} \label{eq: beta condition 2}
\gamma_q \ll  N^2  | t(q)| \varphi(q) \varphi((q_1,q')q_2^2) \varphi(q')^{-1} 
\end{equation}
for all $q \in \mathcal{Q}$ then
\begin{equation} \label{eqn: switching M-same}
\sum_{q \in \mathcal{Q}  } M(\phi_q^*)^{-1} \gamma_q =\sum_{q \in \mathcal{Q}  } \frac{\gamma_q}{N \lVert \eta_q^* \rVert_q^2 } + O_{\varepsilon}(N^{\varepsilon}).
\end{equation}
The same holds if we replace $(\phi_q^*, \eta_q^*, \mathcal{Q})$ by $(\psi_q^*, \kappa_q^* , \mathcal{Q} \backslash \mathcal{E} )$.
\end{lemma}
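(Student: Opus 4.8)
The plan is to compare the two sums term by term. By the very definition $M(\phi_q^*) = N\|\eta_q^*\|_q^2 + CN^\varepsilon$, one has the exact identity
\[
\frac{1}{M(\phi_q^*)} - \frac{1}{N\|\eta_q^*\|_q^2} = \frac{-\,CN^\varepsilon}{M(\phi_q^*)\, N\|\eta_q^*\|_q^2},
\]
so that
\[
\sum_{q\in\mathcal{Q}} M(\phi_q^*)^{-1}\gamma_q - \sum_{q\in\mathcal{Q}} \frac{\gamma_q}{N\|\eta_q^*\|_q^2}
= -\,CN^\varepsilon \sum_{q\in\mathcal{Q}} \frac{\gamma_q}{M(\phi_q^*)\, N\|\eta_q^*\|_q^2}.
\]
Thus it is enough to bound $\sum_{q\in\mathcal{Q}} \gamma_q\big(M(\phi_q^*)\, N\|\eta_q^*\|_q^2\big)^{-1}$ by an absolute constant, for then the prefactor $CN^\varepsilon$ yields precisely the claimed error $O_\varepsilon(N^\varepsilon)$.

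Next I would bound the denominator from below using Lemma~\ref{lem: upper and lower bound for M}: since $M(\phi_q^*) \ge N\|\eta_q^*\|_q^2 \ge N\varphi(q)/4$ for every $q\in\mathcal{Q}$, the general term is at most $16\,\gamma_q/(N^2\varphi(q)^2)$. Now feed in the hypothesis. Under \eqref{eq: beta condition 1} this is $\ll t(q)^2$. Under \eqref{eq: beta condition 2}, writing $q = q_1q_2^2$ with $q_1q_2$ squarefree, coprimality gives $\varphi(q) = \varphi(q_1)\varphi(q_2^2)$ and $\varphi((q_1,q')q_2^2) = \varphi((q_1,q'))\varphi(q_2^2)$, so
\[
\frac{\gamma_q}{N^2\varphi(q)^2} \ll \frac{|t(q)|\,\varphi((q_1,q')q_2^2)}{\varphi(q)\,\varphi(q')} = \frac{|t(q)|}{\varphi(q')}\cdot\frac{\varphi((q_1,q'))}{\varphi(q_1)} \le |t(q)|.
\]
In either case it remains only to check that $\sum_{q\in\mathcal{Q}} t(q)^2$ and $\sum_{q\in\mathcal{Q}} |t(q)|$ are bounded by absolute constants. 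Since $\mathcal{Q}$ consists of cubefree integers and, by \eqref{eq: t} and Lemma~\ref{lem: ramanujan sum property}, $|t(q)| = \prod_{p\mid q}(p^2-1)^{-1}$ depends only on the radical of $q$, both sums are dominated by the convergent Euler products $\prod_p\big(1 + 2(p^2-1)^{-2}\big)$ and $\prod_p\big(1 + 2(p^2-1)^{-1}\big)$ respectively. This establishes \eqref{eqn: switching M-same}.

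The case $(\psi_q^*,\kappa_q^*,\mathcal{Q}\setminus\mathcal{E})$ is handled identically: one uses the second half of Lemma~\ref{lem: upper and lower bound for M} to get $M(\psi_q^*) \ge N\|\kappa_q^*\|_q^2 \ge N\varphi(q)/4$ on $\mathcal{Q}\setminus\mathcal{E}$, and since $\mathcal{E}$ is exactly the set on which $\|\kappa_q^*\|_q$ vanishes all denominators are nonzero there, so the same chain of inequalities goes through verbatim.

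The only real point of care is the bookkeeping: one must arrange the two admissible shapes \eqref{eq: beta condition 1} and \eqref{eq: beta condition 2} of $\gamma_q$ so that, after cancelling the factor $N^2\varphi(q)^2$ coming from the lower bound on the denominator, what survives is summable over $\mathcal{Q}$ uniformly in $N$ and in $q'$, and then to recognise the surviving series as a convergent Euler product rather than settling for a mere $N^\varepsilon$ estimate. No analytic input beyond Lemma~\ref{lem: upper and lower bound for M} and elementary estimates for $t(q)$ and $\varphi$ is required.
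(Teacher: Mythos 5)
Your proposal is correct and follows essentially the same route as the paper: take the exact difference $M(\phi_q^*)^{-1}-(N\lVert\eta_q^*\rVert_q^2)^{-1}$, use the lower bounds of Lemma~\ref{lem: upper and lower bound for M} on both denominators, and then sum the resulting terms, which under \eqref{eq: beta condition 1} or \eqref{eq: beta condition 2} reduce to $t(q)^2$ or $|t(q)|$ respectively. The only (harmless) difference is that you bound $\sum_{q}t(q)^2$ and $\sum_q |t(q)|$ by convergent Euler products, i.e.\ absolute constants, where the paper settles for an $N^{\varepsilon}$ estimate; both give the stated $O_\varepsilon(N^\varepsilon)$.
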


\begin{proof}
Taking the difference and by Lemma~\ref{lem: upper and lower bound for M}, it is enough to bound
\begin{equation} \label{eqn: difference}
N^{\varepsilon} \sum_{q \in \mathcal{Q}  } \frac{\gamma_q  }{N  \varphi(q) (  N \varphi(q) +  C N^{\varepsilon})}. 
\end{equation}
First we suppose~\eqref{eq: beta condition 1} and recalling~\eqref{eq: t}, we majorize the sum above by
\begin{align*}
\sum_{q \in \mathcal{Q}  } |t(q)|^2  &  \ll \sum_{q_1 \le Q_1} |t(q_1)|^2  \sum_{\substack{q_2 \le Q_2 \\ q_2^2 |q'}}   |t(q_2^2)|^2  \\ 
& \ll_{\varepsilon} Q^{\varepsilon}  \sum_{q_1 \le Q_1} \frac{1}{q_1^4} \sum_{\substack{q_2 \le Q_2 \\ q_2^2 |q'}} \frac{1}{q_2^4} \ll_{\varepsilon}  N^{\varepsilon}.
\end{align*}
Therefore~\eqref{eqn: switching M-same} holds. The same argument holds if we replace $(\phi_q^*, \eta_q^*, \mathcal{Q})$ by $(\psi_q^*, \kappa_q^*, \mathcal{Q} \backslash \mathcal{E}  )$.

Next we assume~\eqref{eq: beta condition 2}. Using~\eqref{eqn: difference} and Lemma~\ref{lem: upper and lower bound for M}, we are lead to bound
$$
N^{\varepsilon} \sum_{q \in \mathcal{Q} } \frac{  |t(q)|  \varphi((q_1,q')q_2^2)}{\varphi(q') \varphi(q)}.
$$
Recalling~\eqref{eq: t}, the sum above is majorized by
\begin{align*}
 \frac{1}{\varphi(q')}\sum_{q_1 \le Q_1}   \frac{|t(q_1)| \varphi((q_1 ,q')) }{\varphi(q_1)} \sum_{\substack{q_2 \le Q_2 \\ q_2^2|q'}}  |t(q_2^2)|  
& \ll_{\varepsilon} Q^{\varepsilon} \sum_{q_1 \le Q_1} \frac{1}{q_1^2} \sum_{\substack{q_2 \le Q_2 }} \frac{1}{q_2^2} \\
& \ll_{\varepsilon} N^{\varepsilon}.
\end{align*}
The same holds  when we replace $(\phi_q^*, \eta_q^*, \mathcal{Q})$ by $(\psi_q^*, \kappa_q^* , \mathcal{Q} \backslash \mathcal{E} )$.
\end{proof}

\begin{lemma} \label{lem: [g|g] related bound}
For all $A,\varepsilon >0$, we have
\begin{align*}
[g|g] - \sum_{q \in \mathcal{Q} } M(\phi_q^*)^{-1} |[g|\phi_q^*]|^2  -  \sum_{\substack{q\in \mathcal{Q} \backslash \mathcal{E}  }}  M(\psi_q^*)^{-1}  &|[g|\psi_q^*]|^2
\end{align*}
is 
$$
O_{\varepsilon} \left (N^{1/2} Q^{7+\varepsilon} + NQ_1^{-2} + NQ_2^{-1} \right ).
$$
\end{lemma}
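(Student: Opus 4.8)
The plan is to estimate the quantity directly by computing $[g|g]$ and the two projection sums separately and checking that their difference is an admissible error; no appeal to the general almost‑orthogonality machinery is needed beyond the explicit inputs already prepared. Write $\Sigma = \sum_{q\in\mathcal{Q}}M(\phi_q^*)^{-1}|[g|\phi_q^*]|^2 + \sum_{q\in\mathcal{Q}\setminus\mathcal{E}}M(\psi_q^*)^{-1}|[g|\psi_q^*]|^2$. First I would record the size of $[g|g]$: since $\mu^2$ is $\{0,1\}$‑valued, $[g|g]=\sum_{n\le N}\mu^2(N-n)^2=\sum_{n\le N}\mu^2(N-n)$, so Lemma~\ref{lem: sqfr in AP bound} with $q=1$ (where $\gamma_1\equiv 6/\pi^2$) gives $[g|g]=\tfrac{6}{\pi^2}N+O(N^{1/2})$, and the $O(N^{1/2})$ is harmless because the target error already contains $N^{1/2}Q^{7+\varepsilon}$.

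Next I would isolate the principal part of $\Sigma$. Lemma~\ref{lem: inner product twist with f or g} gives $[g|\phi_q^*]=N[\eta_q^*|\vartheta_q^*]_q+O(q^{3/2+\varepsilon}N^{1/2})$ and $[g|\psi_q^*]=N[\kappa_q^*|\vartheta_q^*]_q+O(q^{3/2+\varepsilon}N^{1/2})$ (all inner products are real). Using $\vartheta_q^*=\tfrac{6t(q)}{\pi^2}(\eta_q^*+\kappa_q^*)$ from \eqref{eq: vartheta_q} together with $[\eta_q^*|\kappa_q^*]_q=0$ from Lemma~\ref{lem: norm of eta and kappa}, one gets $[\eta_q^*|\vartheta_q^*]_q=\tfrac{6t(q)}{\pi^2}\lVert\eta_q^*\rVert_q^2$ and $[\kappa_q^*|\vartheta_q^*]_q=\tfrac{6t(q)}{\pi^2}\lVert\kappa_q^*\rVert_q^2$. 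Since $M(\phi_q^*)=N\lVert\eta_q^*\rVert_q^2+CN^\varepsilon$ with $N\lVert\eta_q^*\rVert_q^2\gg N\varphi(q)$ (Lemma~\ref{lem: upper and lower bound for M}), the main term of $M(\phi_q^*)^{-1}|[g|\phi_q^*]|^2$ is $\tfrac{36Nt(q)^2}{\pi^4}\lVert\eta_q^*\rVert_q^2$, and likewise $\tfrac{36Nt(q)^2}{\pi^4}\lVert\kappa_q^*\rVert_q^2$ for the $\psi$‑term; for $q\in\mathcal{E}$ both $\lVert\kappa_q^*\rVert_q$ and $[g|\psi_q^*]$ vanish, so extending that sum to all of $\mathcal{Q}$ costs nothing. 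Adding and using $\lVert\eta_q^*\rVert_q^2+\lVert\kappa_q^*\rVert_q^2=\varphi(q)$ (Lemma~\ref{lem: norm of eta and kappa}), the combined principal term is $\tfrac{36N}{\pi^4}\sum_{q\in\mathcal{Q}}t(q)^2\varphi(q)$.

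I would then evaluate this arithmetic sum. On cubefree $q$ the function $t(q)^2\varphi(q)$ is multiplicative with local factor $1+t(p)^2\varphi(p)+t(p^2)^2\varphi(p^2)=1+\tfrac{1}{(p-1)(p+1)^2}+\tfrac{p}{(p-1)(p+1)^2}=\tfrac{p^2}{p^2-1}$, so $\sum_{q\text{ cubefree}}t(q)^2\varphi(q)=\prod_p\tfrac{p^2}{p^2-1}=\tfrac{\pi^2}{6}$; this exact identity is precisely what makes $\tfrac{36N}{\pi^4}\cdot\tfrac{\pi^2}{6}=\tfrac{6N}{\pi^2}$ match the main term of $[g|g]$. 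Writing $q=q_1q_2^2$ with $\mu^2(q_1q_2)=1$, the set $\mathcal{Q}$ omits exactly those $q$ with $q_1>Q_1$ or $q_2>Q_2$; since every local factor of both $q_1^3t(q_1)^2\varphi(q_1)$ and $q_2^2t(q_2^2)^2\varphi(q_2^2)$ equals $\tfrac{p^3}{(p-1)(p+1)^2}<1$, one has $t(q_1)^2\varphi(q_1)\le q_1^{-3}$ and $t(q_2^2)^2\varphi(q_2^2)\le q_2^{-2}$, so the omitted tail is $\ll\sum_{q_1>Q_1}q_1^{-3}+\sum_{q_2>Q_2}q_2^{-2}\ll Q_1^{-2}+Q_2^{-1}$. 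Hence the combined principal term equals $\tfrac{6}{\pi^2}N+O(NQ_1^{-2}+NQ_2^{-1})$, which cancels the main term of $[g|g]$ up to $O(N^{1/2}+NQ_1^{-2}+NQ_2^{-1})$.

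What remains is the discrepancy between $\Sigma$ and its principal term, which I would bound termwise. Replacing $M(\phi_q^*)$ by $N\lVert\eta_q^*\rVert_q^2$ introduces a relative error $O(N^{\varepsilon-1}/\varphi(q))$, costing $\ll N^\varepsilon t(q)^2$ per term and $\ll N^\varepsilon$ in total (and similarly for $\psi$); inserting $[g|\phi_q^*]=N[\eta_q^*|\vartheta_q^*]_q+E_q$ with $E_q\ll q^{3/2+\varepsilon}N^{1/2}$, and using $M(\phi_q^*)\gg N\varphi(q)$ together with $|[\eta_q^*|\vartheta_q^*]_q|\ll|t(q)|\varphi(q)$ (from $[\eta_q^*|\vartheta_q^*]_q=\tfrac{6t(q)}{\pi^2}\lVert\eta_q^*\rVert_q^2$ and $\lVert\eta_q^*\rVert_q^2\le\varphi(q)$), the term $|2N[\eta_q^*|\vartheta_q^*]_qE_q+E_q^2|/M(\phi_q^*)$ is $\ll N^{1/2}q^{3/2+\varepsilon}|t(q)|+q^{3+\varepsilon}/\varphi(q)$. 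Summing over $\mathcal{Q}$ with $|t(q)|\ll(\mathrm{rad}\,q)^{-2}$ and the crude count $\#\mathcal{Q}\ll Q^2$ gives $\ll N^{1/2}Q^{5/2+\varepsilon}+Q^{8+\varepsilon}\ll N^{1/2}Q^{7+\varepsilon}$, the last step because $Q=\mathcal{L}^{\max\{D_1,D_2\}}\ll N^{1/2}$. Collecting the four steps yields the claim (the parameter $A$ is not used in the $g$‑case). I expect the only genuinely delicate point to be the cancellation in the third step—recognising $\sum t(q)^2\varphi(q)$ as $\zeta(2)$ and controlling its truncation by $Q_1^{-2}+Q_2^{-1}$ rather than a weaker power of $Q_1,Q_2$; once that is in hand the rest is routine error bookkeeping.
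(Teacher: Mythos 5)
Your proposal is correct and follows essentially the same route as the paper: it uses Lemma~\ref{lem: inner product twist with f or g} to replace $[g|\phi_q^*]$, $[g|\psi_q^*]$ by their local main terms, the decomposition $\vartheta_q^* = \tfrac{6t(q)}{\pi^2}(\eta_q^*+\kappa_q^*)$ with the orthogonality and norm identities of Lemma~\ref{lem: norm of eta and kappa} to recombine the two sums into $N\sum_q \lVert\vartheta_q^*\rVert_q^2$, and then completes the series with tail $O(Q_1^{-2}+Q_2^{-1})$ against $[g|g]=\tfrac{6}{\pi^2}N+O(N^{1/2})$ from Lemma~\ref{lem: sqfr in AP bound}. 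The only cosmetic difference is that you carry out the replacement of $M(\phi_q^*)$ by $N\lVert\eta_q^*\rVert_q^2$ and the error bookkeeping by direct termwise expansion instead of invoking Lemma~\ref{lem: switching M} and the paper's bounded-coefficient ($\beta_q\ll 1$) iteration, which amounts to the same estimates and yields the same error exponents.
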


\begin{proof}

Write
$$
\sum_{q \in \mathcal{Q} } M(\phi_q^*)^{-1}|[g|\phi_q^*]|^2 = \sum_{q \in \mathcal{Q} } \beta_q [\phi_q^*|g]
$$
where $\beta_q = M(\phi_q^*)^{-1} [g|\phi_q^*]$. By Lemma~\ref{lem: inner product twist with f or g}, we replace $[\phi_q^*|g]$ by $N [\eta_q^*|\vartheta_q^*]$ up to an error term. Indeed
\begin{align*}
\sum_{q \in \mathcal{Q}} \beta_q   [\phi_q^*|g]   & =N \sum_{q \in \mathcal{Q} } \beta_q [\eta_q^*|\vartheta_q^*]_q + O_{\varepsilon} \left ( N^{1/2}  \sum_{q \in \mathcal{Q} } \beta_q  q^{3/2+\varepsilon} \right  ) \\
& = N \sum_{q \in \mathcal{Q} } \beta_q [\eta_q^*|\vartheta_q^*]_q + O_{\varepsilon} \left (N^{1/2}Q^{7+\varepsilon} \right )
\end{align*}
after recalling~\eqref{eq: t} and using the bound 
\begin{align*}
\beta_q & \ll (N|t(q)| \varphi(q) + q^{3/2+\varepsilon} N^{1/2})/(N\phi(q))\\
&  \ll 1 
\end{align*}
collected from Lemma~\ref{lem: inner product twist with f or g} and~\ref{lem: upper and lower bound for M}. Reiterating again we have
\begin{align*}
 \sum_{q \in \mathcal{Q} }   M(\phi_q^*)^{-1}  |[g|\phi_q^*]|^2  = N^2  \sum_{q \in \mathcal{Q} }  M(\phi_q^*)^{-1}|[\eta_q^* | \vartheta_q^*]_q|^2  + O_{\varepsilon} \left (N^{1/2}Q^{7+\varepsilon}  \right  ).
\end{align*}
We repeat this for the other sum and in total we get
\begin{align*}
& \sum_{q \in \mathcal{Q} } M(\phi_q^*)^{-1} |[g|\phi_q^*]|^2  + \sum_{q \in \mathcal{Q} \backslash \mathcal{E}  } M(\psi_q^*)^{-1} |[g|\psi_q^*]|^2 \\
& = N^2  \sum_{q \in \mathcal{Q} } M(\phi_q^*)^{-1}|[\eta_q^* | \vartheta_q^*]_q|^2 +  N^2\sum_{q \in \mathcal{Q} \backslash \mathcal{E} } M(\psi_q^*)^{-1} |[ \vartheta_q^* |\kappa_q^*]_q|^2 \\
& \quad  + O_{\varepsilon} \left (N^{1/2}Q^{7+\varepsilon} \right ).
\end{align*}

By Lemma~\ref{lem: switching M}, we replace $M(\phi_q^*)$ by $N \lVert \eta_q^* \rVert_q^2$ up to an error term and get
\begin{align*}
N^2  \sum_{q \in \mathcal{Q} } M(\phi_q^*)^{-1} & |[\eta_q^* | \vartheta_q^*]_q|^2   = N  \sum_{q \in \mathcal{Q} } \frac{|[\eta_q^* | \vartheta_q^*]_q|^2}{ \lVert \eta_q^* \rVert_q^2} +  O_{\varepsilon} \left (N^{1/2}Q^{7+\varepsilon} \right ).
\end{align*}

We recall from~\eqref{eq: vartheta_q}
$$
\vartheta_q^* = \frac{6t(q)}{\pi^2} (\eta_q^* + \kappa_q^*),
$$
it follows
\begin{align*}
\frac{ |[ \vartheta_q^* | \eta_q^*]_q|^2  }{ \lVert \eta_q^* \rVert_q^{2}} + \frac{|[ \vartheta_q^*| \kappa_q^*  ]_q|^2  }{ \lVert \kappa_q^* \rVert_q^{2}} & = \left ( \frac{6t(q)}{\pi^2} \right)^2\lVert \eta_q^* \rVert_q^{2} + \left( \frac{6t(q)}{\pi^2} \right )^2 \lVert \kappa_q^* \rVert_q^{2} \\
&  = \lVert \vartheta_q^* \rVert_q^2
\end{align*}
by~\eqref{eq: rho_q},~\eqref{eq: vartheta_q} and Lemma~\ref{lem: theta,rho norms},~\ref{lem: norm of eta and kappa}.

Next, we reiterate the process  and replace $M(\psi_q^*)$ by $N\lVert \kappa_q^* \rVert_q^2$ up to an error term. In total we get
\begin{align*}
& \sum_{q \in \mathcal{Q} }   M(\phi_q^*)^{-1} |[g|\phi_q^*]|^2    + \sum_{q \in \mathcal{Q} \backslash \mathcal{E}   } M(\psi_q^*)^{-1} |[g|\psi_q^*]|^2 \\
&  = N  \sum_{q \in \mathcal{Q} } \frac{|[ \vartheta_q^* | \eta_q^*]_q|^2}{ \lVert \eta_q^* \rVert_q^2} + N \sum_{q \in \mathcal{Q} \backslash \mathcal{E} } \frac{|[ \vartheta_q^*| \kappa_q^*  ]_q|^2}{ \lVert \kappa_q^* \rVert_q^2} + O_{\varepsilon}(N^{1/2}Q^{7+\varepsilon}) \\
& =  N \sum_{q \in \mathcal{Q} }\lVert \vartheta_q^* \rVert_q^2 + O_{\varepsilon} \left (N^{1/2}Q^{7+\varepsilon}  \right ).
\end{align*}

By Lemma~\ref{lem: theta,rho norms} we have 
$$
\lVert \vartheta_q^* \rVert_q^2 = \left (\frac{6t(q)}{\pi^2} \right )^2 \varphi(q)
$$
and thus completing the series we obtain
\begin{align*}
\sum_{q \in \mathcal{Q} } \lVert    \vartheta_q^*  \rVert_q^2 &  = \left (\frac{6}{\pi^2} \right )^2 \sum_{q_1 \le Q_1, q_2 \le Q_2} \mu^2(q_1 q_2 )  t(q_1)^2 t(q_2)^2 \varphi(q_1) \varphi(q_2^2)   \\
&=  \left (\frac{6}{\pi^2} \right )^2 \prod_{p=2}^{\infty} \left (1 +  \frac{p-1}{(p^2-1 )^2} + \frac{p(p-1)}{(p^2-1)^2} \right  )  +  O \left (Q_1^{-2} + Q_2^{-1} \right ) \\
& =\frac{6}{\pi^2} +O \left (Q_1^{-2} + Q_2^{-1} \right).
\end{align*}
Observe that by Lemma~\ref{lem: sqfr in AP bound}
\begin{align*}
[g|g] & = \sum_{\substack{n \le N } } \mu^2(N-n) = \frac{6}{ \pi^2} N + O \left (N^{1/2}  \right )
\end{align*}
and the result follows.
\end{proof}

\begin{lemma} \label{lem: [f|f] related bound}
The sum
\begin{align*}
[f|f] - \sum_{q \in \mathcal{Q} } M(\phi_q^*)^{-1} |[f|\phi_q^*]|^2  - \sum_{\substack{q\in \mathcal{Q} \backslash \mathcal{E} }}  M(\psi_q^*)^{-1}  &|[f|\psi_q^*]|^2 \ll N\mathcal{L}.
\end{align*}
\end{lemma}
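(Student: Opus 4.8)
The plan is to show that $[f|f]$ and each of the two sums over $\mathcal{Q}$ is $O(N\mathcal{L})$; since all three are non-negative, this immediately bounds their signed combination by $O(N\mathcal{L})$. The estimate for $[f|f]$ is the easy part: $[f|f]=\sum_{n\le N}\Lambda(n)^2\mathbbm{1}_{n\equiv a'[q']}\le\mathcal{L}\sum_{n\le N}\Lambda(n)\ll N\mathcal{L}$ by Chebyshev's bound $\sum_{n\le N}\Lambda(n)\ll N$.

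For the sum over $\phi_q^*$ I would argue term by term. By Lemma~\ref{lem: upper and lower bound for M}, $M(\phi_q^*)=N\lVert\eta_q^*\rVert_q^2+CN^{\varepsilon}\gg N\varphi(q)$ for every $q\in\mathcal{Q}$, while by Lemma~\ref{lem: inner product twist with f or g}, for any fixed $A>0$,
\[
|[f|\phi_q^*]|\ll\mathbbm{1}_{q_2^2|q'}\,\frac{N\varphi((q_1,q')q_2^2)}{\varphi(q')}+q^2N\mathcal{L}^{-A}.
\]
Squaring this, dividing by $M(\phi_q^*)$, and using that $(q_1,q')$ is coprime to $q_2$ (as $\mu^2(q_1q_2)=1$), so that $\varphi((q_1,q')q_2^2)=\varphi((q_1,q'))\varphi(q_2^2)$ and $\varphi(q)=\varphi(q_1)\varphi(q_2^2)$, one finds that $\mathbbm{1}_{q_2^2|q'}\varphi((q_1,q')q_2^2)^2/(\varphi(q)\varphi(q')^2)$ is precisely the expression $\lVert\rho_q^*\rVert_q^2$ of Lemma~\ref{lem: theta,rho norms}; hence
\[
M(\phi_q^*)^{-1}|[f|\phi_q^*]|^2\ll N\lVert\rho_q^*\rVert_q^2+q^{O(1)}N\mathcal{L}^{-2A}.
\]
The same bound holds for $M(\psi_q^*)^{-1}|[f|\psi_q^*]|^2$ on $\mathcal{Q}\setminus\mathcal{E}$: there $M(\psi_q^*)\gg N\varphi(q)$ again by Lemma~\ref{lem: upper and lower bound for M}, and $|[f|\psi_q^*]|$ obeys the same upper bound by Lemma~\ref{lem: inner product twist with f or g}.

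Summing over $\mathcal{Q}$, the error terms contribute $\ll N\mathcal{L}^{-2A}\sum_{q\in\mathcal{Q}}q^{O(1)}\ll N\mathcal{L}^{-2A}Q^{O(1)}\ll N$, once $A$ is fixed large enough in terms of $D_1,D_2$. For the remaining piece I would bound $\sum_{q\in\mathcal{Q}}\lVert\rho_q^*\rVert_q^2$ directly from Lemma~\ref{lem: theta,rho norms}: dropping the coprimality constraint linking $q_1$ and $q_2$, one has $\sum_{q_2^2|q',\ \mu^2(q_2)=1}\varphi(q_2^2)=\prod_{p^2|q'}(1+\varphi(p^2))\le\prod_{p^2|q'}p^2\le q'$, and grouping $q_1$ by $d=(q_1,q')$ together with $\varphi(dm)\ge\varphi(d)\varphi(m)$ and $\sum_{m\le x}1/\varphi(m)\ll\log(x+2)$ gives $\sum_{q_1\le Q_1}\varphi((q_1,q'))^2/\varphi(q_1)\ll q'\log Q_1$; therefore $\sum_{q\in\mathcal{Q}}\lVert\rho_q^*\rVert_q^2\ll(q'/\varphi(q'))^2\log Q_1\ll(\log\log\mathcal{L})^2\log\mathcal{L}\ll\mathcal{L}$, using $q'\le\mathcal{L}^{C_1}$. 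Assembling the pieces, each of the two sums is $\ll N\mathcal{L}$, and with $[f|f]\ll N\mathcal{L}$ the claimed bound follows.

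There is no serious obstacle here; the only point requiring care is recognising that squaring the $f$-twist estimate of Lemma~\ref{lem: inner product twist with f or g} reproduces exactly $N\lVert\rho_q^*\rVert_q^2$, after which every remaining sum converges with room to spare. An essentially equivalent route mirrors the proof of Lemma~\ref{lem: [g|g] related bound}: write $\sum_q M(\phi_q^*)^{-1}|[f|\phi_q^*]|^2=\sum_q\beta_q[\phi_q^*|f]$ with $\beta_q=M(\phi_q^*)^{-1}[f|\phi_q^*]\ll1$, replace $[\phi_q^*|f]$ by $N[\eta_q^*|\rho_q^*]_q$ and $[\psi_q^*|f]$ by $-N[\kappa_q^*|\rho_q^*]_q$ via Lemma~\ref{lem: inner product twist with f or g}, swap $M(\phi_q^*)$ for $N\lVert\eta_q^*\rVert_q^2$ (by hand, since the hypotheses of Lemma~\ref{lem: switching M} are not literally met here), and use the identity $|[\eta_q^*|\rho_q^*]_q|^2/\lVert\eta_q^*\rVert_q^2+|[\kappa_q^*|\rho_q^*]_q|^2/\lVert\kappa_q^*\rVert_q^2=\lVert\rho_q^*\rVert_q^2$, which is immediate from~\eqref{eq: rho_q}, $[\eta_q^*|\kappa_q^*]_q=0$ and $\lVert\eta_q^*\rVert_q^2+\lVert\kappa_q^*\rVert_q^2=\varphi(q)$; this yields the cleaner identity $\sum_q M(\phi_q^*)^{-1}|[f|\phi_q^*]|^2+\sum_{q\in\mathcal{Q}\setminus\mathcal{E}}M(\psi_q^*)^{-1}|[f|\psi_q^*]|^2=N\sum_{q\in\mathcal{Q}}\lVert\rho_q^*\rVert_q^2+O(N)$. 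Either way the moral is that, unlike for $g$, the local model does not capture the genuine main term of size $\asymp N\mathcal{L}/\varphi(q')$ in $[f|f]$ — the logarithmic weight in $\Lambda^2$ is invisible to it — so the difference is simply of the same order as $[f|f]$, namely $\ll N\mathcal{L}$.
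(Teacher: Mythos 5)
Your proof is correct, but it follows a genuinely different route from the paper. The paper's own argument is two lines: since $M(\phi_q^*)$ and $M(\psi_q^*)$ were built precisely so that $(\phi_q^*,\psi_q^*)$ with these majorants form an almost orthogonal system, Lemmas 1.1 and 1.2 of Ramar\'e give directly
$0 \le \sum_{q\in\mathcal{Q}}M(\phi_q^*)^{-1}|[f|\phi_q^*]|^2+\sum_{q\in\mathcal{Q}\backslash\mathcal{E}}M(\psi_q^*)^{-1}|[f|\psi_q^*]|^2\le[f|f]$,
so the whole difference lies in $[0,[f|f]]$ and it only remains to note $[f|f]\le\mathcal{L}\sum_{n\le N}\Lambda(n)\ll N\mathcal{L}$ (your first paragraph). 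You instead bound the two local-model sums explicitly term by term, using $M(\phi_q^*)\ge N\lVert\eta_q^*\rVert_q^2\gg N\varphi(q)$ from Lemma~\ref{lem: upper and lower bound for M}, the $f$-twist estimate of Lemma~\ref{lem: inner product twist with f or g}, and the observation that the resulting quotient is exactly $N\lVert\rho_q^*\rVert_q^2$ from Lemma~\ref{lem: theta,rho norms}, then summing $\lVert\rho_q^*\rVert_q^2$ over $\mathcal{Q}$. This is more computational but sound (the divisor-grouping bound for $\sum_{q_1\le Q_1}\varphi((q_1,q'))^2/\varphi(q_1)$ and the choice of $A$ large in terms of $D_1,D_2$ are both legitimate, $A$ being a free parameter), and it actually buys more than the lemma asserts: it shows the local-model sums are only $O\bigl(N(\log\log N)^{O(1)}\bigr)$, so the quantity in the lemma is genuinely of size $[f|f]\asymp N\mathcal{L}/\varphi(q')$ — your closing remark that the model cannot see the logarithmic weight in $\Lambda^2$ is exactly the right moral, and is implicit in why the paper is content with the crude Bessel-type bound. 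Your alternative second route is fine in spirit, and you are right to flag that Lemma~\ref{lem: switching M} does not apply verbatim there; as written that step would need the small ad hoc estimate you indicate, so the first, term-by-term argument is the one to keep.
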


\begin{proof}
By Lemma 1.1 and 1.2 of~\cite{R1} we see that
$$
0 \le  \sum_{q \in \mathcal{Q} } M(\phi_q^*)^{-1} |[f|\phi_q^*]|^2  + \sum_{\substack{q \in \mathcal{Q} \backslash  \mathcal{E}  }}  M(\psi_q^*)^{-1}  |[f|\psi_q^*]|^2 \le [f|f].
$$
Therefore it is enough to bound $[f|f]$ and indeed we have
$$
[f|f]= \sum_{ \substack{n \le N \\ n \equiv a' [q'] }} \Lambda(n)^2 \le \mathcal{L} \sum_{n \le N} \Lambda(n)  \ll N \mathcal{L}.
$$
\end{proof}

\begin{lemma} \label{lem: twisted f and g bound}
For all $A, \varepsilon >0$, we have
$$
\sum_{q \in \mathcal{Q} } M(\varphi_q^*)^{-1} [f|\phi_q^*][\phi_q^* | g] + \sum_{q \in \mathcal{Q} \backslash \mathcal{E}  } M(\psi_q^*)^{-1} [f|\psi_q^*][\psi_q^* | g]
$$
is 
$$
\mathfrak{S}_{a',q'}(N)N + O_{A,C_1,D_1,D_2,\varepsilon} \left ( N Q^8 \mathcal{L}^{-A}  + NQ^{\varepsilon} Q_1^{-1} \right )
$$
where $\mathfrak{S}_{a',q'}(N)$ is the singular series defined in Theorem~\ref{T1}.
\end{lemma}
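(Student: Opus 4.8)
The plan is to carry out the same reduction as in the proof of Lemma~\ref{lem: [g|g] related bound}, but applied to the bilinear sums $\sum_{q\in\mathcal{Q}}M(\phi_q^*)^{-1}[f|\phi_q^*][\phi_q^*|g]$ and $\sum_{q\in\mathcal{Q}\backslash\mathcal{E}}M(\psi_q^*)^{-1}[f|\psi_q^*][\psi_q^*|g]$, and then to recognise the resulting arithmetic sum as a truncation of the Euler product defining $\mathfrak{S}_{a',q'}(N)$. First I would use Lemma~\ref{lem: inner product twist with f or g} to substitute $N[\eta_q^*|\rho_q^*]_q$ for $[f|\phi_q^*]$, $N[\eta_q^*|\vartheta_q^*]_q$ for $[\phi_q^*|g]$, and the analogous expressions for the $\psi_q^*$ products, expanding each summand into a main term together with (main $\times$ error) and (error $\times$ error) pieces. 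By Lemma~\ref{lem: upper and lower bound for M} one has $M(\phi_q^*)\gg N\varphi(q)$ and $M(\psi_q^*)\gg N\varphi(q)$ on $\mathcal{Q}\backslash\mathcal{E}$, so, using the ``in particular'' estimates of Lemma~\ref{lem: inner product twist with f or g}, the coefficients $M(\phi_q^*)^{-1}[f|\phi_q^*]$ and $M(\phi_q^*)^{-1}[\phi_q^*|g]$ are all $\ll 1$; hence the $q^2N\mathcal{L}^{-A}$ error term of $[f|\phi_q^*]$ contributes $\ll N\mathcal{L}^{-A}\sum_{q\in\mathcal{Q}}q^2\ll NQ^8\mathcal{L}^{-A}$ (using $\sum_{q\in\mathcal{Q}}q^2\ll Q_1^3Q_2^5\ll Q^8$), the $q^{3/2+\varepsilon}N^{1/2}$ error term of $[\phi_q^*|g]$ contributes $\ll N^{1/2}Q^{O(1)}$, and the error $\times$ error pieces are smaller still; for $N$ large all of these are absorbed into $O_\varepsilon(NQ^8\mathcal{L}^{-A})$.

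Next I would apply Lemma~\ref{lem: switching M} to replace $M(\phi_q^*)$ by $N\lVert\eta_q^*\rVert_q^2$ and $M(\psi_q^*)$ by $N\lVert\kappa_q^*\rVert_q^2$; its hypothesis \eqref{eq: beta condition 2} holds since $[f|\phi_q^*][\phi_q^*|g]\ll N^2|t(q)|\varphi(q)\varphi((q_1,q')q_2^2)\varphi(q')^{-1}$ (and correspondingly on $\mathcal{Q}\backslash\mathcal{E}$) by Lemma~\ref{lem: inner product twist with f or g}, and the replacement costs $O_\varepsilon(N^\varepsilon)$. Writing $\lambda_q=\mathbbm{1}_{q_2^2\mid q'}\,\mu(q_1/(q_1,q'))\big/\big(\varphi(q')\varphi(q_1/(q_1,q'))\big)$, I would then evaluate each summand via \eqref{eq: rho_q}, \eqref{eq: vartheta_q}, the orthogonality $[\eta_q^*|\kappa_q^*]_q=0$, and the norm identities of Lemma~\ref{lem: norm of eta and kappa}: these give $[\eta_q^*|\rho_q^*]_q=\lambda_q\lVert\eta_q^*\rVert_q^2$ and $[\eta_q^*|\vartheta_q^*]_q=\frac{6t(q)}{\pi^2}\lVert\eta_q^*\rVert_q^2$, and likewise with $\kappa_q^*$ in place of $\eta_q^*$. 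Keeping careful track of the sign with which the $\psi_q^*$-term enters, the $\phi_q^*$- and $\psi_q^*$-contributions attached to a fixed $q\in\mathcal{Q}\backslash\mathcal{E}$ combine, the $\varphi(q)$-parts of the two norms cancelling, to
\[
N\lambda_q\frac{6t(q)}{\pi^2}\big(\lVert\eta_q^*\rVert_q^2-\lVert\kappa_q^*\rVert_q^2\big)=N\lambda_q\frac{6t(q)}{\pi^2}\,c_{q_1/(q_1,q')}(N)\,c_{(q_1,q')q_2^2}(N-a');
\]
for $q\in\mathcal{E}$ only the $\phi_q^*$-term is present, but there $\lVert\eta_q^*\rVert_q^2=\varphi(q)=c_{q_1/(q_1,q')}(N)c_{(q_1,q')q_2^2}(N-a')$, so the same expression appears. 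Consequently the two sums equal $\frac{6N}{\pi^2}\sum_{q\in\mathcal{Q}}\lambda_q\,t(q)\,c_{q_1/(q_1,q')}(N)\,c_{(q_1,q')q_2^2}(N-a')+O_\varepsilon(NQ^8\mathcal{L}^{-A})$.

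Finally I would evaluate this arithmetic sum. Because $q_2^2\mid q'$ forces $q_2\le\sqrt{q'}\le\mathcal{L}^{C_1/2}\le Q_2$, the bound $q_2\le Q_2$ in \eqref{eqn: mathcal(Q)} is vacuous, so only the range $q_1\le Q_1$ is a genuine truncation. Using $t(q)=t(q_1)t(q_2^2)$ and $c_{(q_1,q')q_2^2}=c_{(q_1,q')}c_{q_2^2}$ (legitimate since $(q_1,q_2)=1$) and evaluating the Ramanujan sums with Lemma~\ref{lem: ramanujan sum property}, the summand factors over primes, the Euler factor at $p$ being $1+c_p(N)/\big((p^2-1)(p-1)\big)$ when $p\nmid q'$, $1-c_p(N-a')/(p^2-1)$ when $p\mid\!\mid q'$, and $1-\big(c_p(N-a')+c_{p^2}(N-a')\big)/(p^2-1)$ when $p^2\mid q'$ (the last coming from the three options: $p$ in $q_1$, $p$ in $q_2$, or $p$ in neither). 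Extending the $q_1$-sum to all squarefree integers incurs a tail $\ll \varphi(q')^{-1}\big(\sum_{q_1>Q_1}\prod_{p\mid q_1}(p^2-1)^{-1}\big)\prod_{p\mid q'}\big(1+(p+1)^{-1}\big)$, which after multiplication by $\frac{6N}{\pi^2}$ is $\ll NQ^\varepsilon Q_1^{-1}$; and the completed product is precisely $\mathfrak{S}_{a',q'}(N)N$ by comparison with the formula in Theorem~\ref{T1}. Combining the error terms yields the asserted estimate.

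The step I expect to be most delicate is the explicit evaluation of the summand after the $M$-replacement: one must pin down the sign with which the $\psi_q^*$-term contributes, so that the two local norms enter in the combination $\lVert\eta_q^*\rVert_q^2-\lVert\kappa_q^*\rVert_q^2$ and produce the $N$-dependent Ramanujan-sum factor demanded by the singular series (the case $q'=1$, where the sum must collapse to $\mathfrak{S}_{0,1}(N)N=\big(\prod_{p\nmid N}\!\big(1-\tfrac1{p(p-1)}\big)\big)N$, is a useful check on this), and one must treat $\mathcal{E}$ and $\mathcal{Q}\backslash\mathcal{E}$ uniformly, since $M(\psi_q^*)$ and the quotient $[\kappa_q^*|\,\cdot\,]_q/\lVert\kappa_q^*\rVert_q^2$ are only meaningful off $\mathcal{E}$. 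The remaining manipulations are routine estimations of the same kind used for Lemmas~\ref{lem: error bound for M} and~\ref{lem: [g|g] related bound}.
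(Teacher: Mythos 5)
Your proposal is correct and follows essentially the same route as the paper: substitute the local main terms via Lemma~\ref{lem: inner product twist with f or g}, replace $M(\phi_q^*)$, $M(\psi_q^*)$ by $N\lVert\eta_q^*\rVert_q^2$, $N\lVert\kappa_q^*\rVert_q^2$ via Lemma~\ref{lem: switching M}, combine the $\eta_q^*$- and $\kappa_q^*$-contributions through \eqref{eq: rho_q}, \eqref{eq: vartheta_q}, orthogonality and the norm identities so that each $q$ contributes $N[\rho_q^*|\vartheta_q^*]_q$, and finally complete the series and identify the Euler product with $\mathfrak{S}_{a',q'}(N)$. Your handling of the set $\mathcal{E}$ and of the tail (observing that $q_2^2\mid q'$ makes the $q_2$-truncation vacuous) is, if anything, slightly more explicit than the paper's, but the argument is the same.
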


\begin{proof}
For simplicity all implied constant in the $O(\cdot)$ term may depend on $A,C_1, D_1,D_2,\varepsilon$.

Set $\alpha_q = M(\phi_q^*)^{-1} [f|\phi_q^*]$. By Lemma~\ref{lem: inner product twist with f or g}, we obtain
\begin{align*}
\sum_{q \in \mathcal{Q}  } \alpha_q    [\phi_q^* | g] & = N \sum_{q \in \mathcal{Q}  } \alpha_q  [\eta_q^*|\vartheta_q^*]_q +O \left  (  N^{1/2}  \sum_{q \in \mathcal{Q}  } \alpha_q  q^{3/2+\varepsilon} \right ) \\
& = N \sum_{q \in \mathcal{Q}  } \alpha_q  [\eta_q^*|\vartheta_q^*]_q +  O \left (  N^{1/2} Q^{10+\varepsilon} \right ),
\end{align*}
by using the bound
\begin{align*}
\alpha_q  & \ll \frac{N \varphi((q_1,q')q_2^2)  \varphi(q')^{-1} +  q^2 N\mathcal{L}^{-A}}{N  \varphi(q)}  \\
& \ll_{\varepsilon} q^{1+\varepsilon}
\end{align*}
provided by Lemma~\ref{lem: inner product twist with f or g}  and~\ref{lem: upper and lower bound for M}.

Now set $\beta_q = M(\phi_q^*)^{-1}N [\eta_q^*|\vartheta_q^*]_q$. Then again by Lemma~\ref{lem: inner product twist with f or g}, we have
\begin{align*}
\sum_{q \in \mathcal{Q}  } \beta_q  [f|\phi_q^* ]  = & N \sum_{q \in \mathcal{Q}  } \beta_q [\rho_q^*|\eta_q^*]_q    + O \left ( N^{1/2} Q^{10+\varepsilon}  +  N\mathcal{L}^{-A} \sum_{q \in \mathcal{Q}  } \beta_q q^2    \right ).
\end{align*}
 By Lemma~\ref{lem: inner product twist with f or g} and~\ref{lem: upper and lower bound for M}, we assert
\begin{align*}
\beta_q & \ll \frac{N|t(q)| \varphi(q)}{N  \varphi(q)}  \\
& \le 1
\end{align*}
and recalling~\eqref{eq: t} we get that the error term is $O \left (N Q^8 \mathcal{L}^{-A} \right  )$.

We do the same for the other sum and we ultimately get
\begin{align*}
& \sum_{q \in \mathcal{Q}  } M(\phi_q^*)^{-1} [f|\phi_q^*][\phi_q^* | g] + \sum_{q \in \mathcal{Q} \backslash \mathcal{E}  } M(\psi_q^*)^{-1} [f|\psi_q^*][\psi_q^* | g] \\
& = N^2 \sum_{q \in \mathcal{Q}  } M(\phi_q^*)^{-1} [\rho_q^* | \eta_q^*] [\eta_q^*| \vartheta_q^*] + N^2 \sum_{q \in \mathcal{Q} \backslash \mathcal{E} } M(\psi_q^*)^{-1} [\rho_q^*|\kappa_q^*][\kappa_q^* | \vartheta_q^* ] \\
& \quad + O \left (N Q^8 \mathcal{L}^{-A} \right ).
\end{align*}
Next  by applying Lemma~\ref{lem: switching M} we replace $M(\phi_q^*)$ with $N \lVert \eta_q^* \rVert_q^2$ up to an error term and obtain
\begin{align*}
 N^2 \sum_{q \in \mathcal{Q}  } M(\phi_q^*)^{-1} [\rho_q^* | \eta_q^*] [\eta_q^*| \vartheta_q^*]  =N \sum_{q \in \mathcal{Q}  } \frac{[\rho_q^*|\eta_q^*] [\eta_q^* | \vartheta_q^*]}{ \lVert \eta_q^* \rVert_q^2} + O \left (   N Q^8 \mathcal{L}^{-A} \right  ).
\end{align*}

We recall from~\eqref{eq: vartheta_q} and~\eqref{eq: rho_q} that 
$$
\vartheta_q^* = \frac{6t(q)}{\pi^2} (\eta_q^* + \kappa_q^*) 
$$
and
$$
\rho_q^* =  \frac{ \mathbbm{1}_{q_2^2|q'} \cdot  \mu(q_1/(q_1,q')) }{  \varphi(q') \varphi(q_1/(q_1,q'))  }(\eta_q^* - \kappa_q^* ).
$$
It follows
\begin{align*}
\frac{[\rho_q^*|\eta_q^*]_q \cdot  [\eta_q^* | \vartheta_q^*]_q }{  \lVert \eta_q^* \rVert_q^{2}} &  + \frac{[\rho_q^*|\kappa_q^*]_q \cdot  [\kappa_q^* | \vartheta_q^*]_q }{ \lVert \kappa_q^* \rVert_q^{2}} \\
& = \mathbbm{1}_{q_2^2 |q'} \cdot  \frac{6 t(q) \mu(q_1/(q_1,q'))}{\pi^2 \varphi(q') \varphi(q_1/(q_1,q'))} \left (\lVert \eta_q^* \rVert_q^{2} - \lVert \kappa_q^* \rVert_q^{2} \right ) \\
& = [\rho_q^*|\vartheta_q^*]_q 
\end{align*}
by~\eqref{eq: rho_q},~\eqref{eq: vartheta_q} and Lemma~\ref{lem:theta-rho inner product},~\ref{lem: norm of eta and kappa}.

We reiterate the same procedure and replace $M(\psi_q^*)$ with $N \lVert \kappa_q^* \rVert_q^2$ up to an error term. In total we have
\begin{align*}
& \sum_{q \in \mathcal{Q}  }    M(\phi_q^*)^{-1}   [f|\varphi_q^*] [\varphi_q^* | g] +\sum_{q \in \mathcal{Q} \backslash \mathcal{E}   } M(\psi_q^*)^{-1} [f|\psi_q^*][\psi_q^* | g] \\
& = N \sum_{q \in \mathcal{Q}  } \frac{[\rho_q^*|\eta_q^*] [\eta_q^* | \vartheta_q^*]}{ \lVert \eta_q^* \rVert_q^2} + N \sum_{q \in \mathcal{Q} \backslash \mathcal{E}  } \frac{[\rho_q^*|\kappa_q^*] [\kappa_q^* | \vartheta_q^*]}{ \lVert \kappa_q^* \rVert_q^2}   +  O \left (N Q^8 \mathcal{L}^{-A}  \right ) \\
& = N \sum_{q \in \mathcal{Q} } [\rho_q^*|\vartheta_q^*]_q +  O \left (N Q^8 \mathcal{L}^{-A} \right )
\end{align*}

Next we compute the sum
\begin{align*}
\sum_{q \in \mathcal{Q}} & [ \rho_q^*|\vartheta_q^*  ]_q   = \frac{6}{\varphi(q') \pi^2}\sum_{q \in \mathcal{Q} } \mathbbm{1}_{q_2^2 |q'} \cdot \frac{ t(q)   c_{q_1/(q_1,q')}(N) c_{(q_1,q')q_2^2}(N-a') }{ \mu(q_1/(q_1,q')) \varphi(q_1 /(q_1,q' ))}.
\end{align*}
We complete the series and so the right hand side becomes
$$
\frac{6}{\varphi(q') \pi^2} \sum_{ \substack{q_1 ,q_2=1 \\ \mu^2(q_1 q_2)=1 }}^{\infty} \mathbbm{1}_{q_2^2 |q'} \cdot \frac{t(q_1 q_2^2)  c_{q_1/(q_1,q')}(N) c_{(q_1,q') q_2^2}(N-a')}{\mu(q_1/(q_1,q'))  \varphi(q_1/(q_1,q'))} 
$$
up to an error term of
\begin{align*}
& \ll \frac{1}{\varphi(q') } \sum_{ q_1 > Q_1} t(q_1) \varphi((q_1,q')) \sum_{ \substack{q_2 > Q_2 \\ \mu^2(q_1 q_2)=1 }} \mathbbm{1}_{q_2^2 |q'} \cdot  t(q_2^2) \varphi(q_2^2) \\
& \ll_{\varepsilon} Q^{\varepsilon}Q_1^{-1},
\end{align*}
since the number of divisors of $q'$ is at most $O_{\varepsilon}(Q^{\varepsilon})$ and by recalling~\eqref{eq: t}. The singular series can be represented as an infinite product
\begin{align*}
\prod_{p=2}^{\infty}  \left (1 + \frac{t(p)c_{p/(p,q')}(N) c_{(p,q')}( N-a')}{ \mu(p/(p,q'))  \varphi(p/(p,q'))}   + \mathbbm{1}_{p^2|q'} \cdot t(p^2) c_{p^2}(N-a')  \right ).
\end{align*}

If $p \nmid q'$ then the term in the product simplifies to 
$$
1+\frac{c_p(N)}{(p^2-1)(p-1)} .
$$
If $p \mid\!\mid q'$ then the term in the product simplifies to 
$$
 1 - \frac{c_p(N-a')}{p^2-1} .
$$
If $p^2 \mid  q'$ then the term in the product simplifies to 
$$
1 - \frac{c_p(N-a) + c_{p^2}(N-a) }{p^2-1} .
$$
The result follows.
\end{proof}

\section{Proof of Theorem~\ref{T1}}
For any $h_1, h_2 \in \mathscr{F}([1,N])$, let us denote
$$
\langle h_1|h_2 \rangle = \sum_{q \in \mathcal{Q} } M^{-1}(\phi_q^*) [h_1|\phi_q^*][\phi_q^*|h_2] + \sum_{q \in \mathcal{Q} \backslash \mathcal{E} } M^{-1}(\psi_q^*) [h_1|\psi_q^*][\psi_q^*|h_2]. 
$$

We recall $f, g$ from~\eqref{def: f},~\eqref{def: g} respectively and consequently the scalar product
$$
[f|g] = \sum_{\substack{N = n_1 + n_2 \\ n_1 \equiv a' [q']  }}  \Lambda(n_1) \mu^2(n_2)  . 
$$

By Cauchy's inequality, we assert
$$
|[f|g]-\langle f|g \rangle | \le \sqrt{([f|f]-\langle f|f \rangle) \cdot ([g|g]-\langle g|g \rangle)}.
$$
By Lemma~\ref{lem: [g|g] related bound} and~\ref{lem: [f|f] related bound}, the right hand side is majorised by
\begin{align*}
& \ll_{A, C_1, D_1, D_2, \varepsilon} \sqrt{N\mathcal{L}\left (N^{1/2} Q^{7 + \varepsilon} +NQ_1^{-2} + NQ_2^{-1} \right )}    \\
& \ll_{A,C_1, D_1, D_2, \varepsilon} N \mathcal{L}^{1/2} \left (    Q_1^{-1} + Q_2^{-1/2 }  \right  ).
\end{align*}

Hence we can approximate $[f|g]$ by $\langle f|g\rangle$ and by Lemma~\ref{lem: twisted f and g bound} we obtain
\begin{align*}
[f|g] &= \mathfrak{S}_{a',q'}(N)N   \\
& \quad + O_{A,C_1, D_1, D_2, \varepsilon} \left (  N Q^{8} \mathcal{L}^{-A} + NQ^{\varepsilon} Q_1^{-1}   + N\mathcal{L}^{1/2} \left (    Q_1^{-1} + Q_2^{-1/2} \right ) \right ).
\end{align*}
Recall from~\eqref{eqn: mathcal(Q)} that $Q_1 = \mathcal{L}^{D_1}, Q_2 = \mathcal{L}^{D_2}$, and $Q = \max \{ Q_1, Q_2 \}$.
Taking 
\begin{align*}
D_1 & = C_1 + C_2 +2, \\
D_2 &  = 2D_1,\\
 A & = 8D_2 + C_1 + C_2 + 1
\end{align*}
simplifies the error term to
$$
[f|g] = \mathfrak{S}_{a',q'}(N)N  + O_{C_1,C_2}  \left (N \mathcal{L}^{-C_1-C_2-1}  \right ).
$$
Notice that
\begin{align*}
\mathcal{R}_{a',q'}(N) & = [f|g] + \sum_{k \ge 2} \sum_{\substack{ p^k \le N \\ p^k \equiv a' [q'] \\}}  \mu^2(N-p) \log(p).
\end{align*}
The double sum can be estimated crudely by
\begin{align*}
 \sum_{k \ge 2} \sum_{\substack{ p^k \le N  }}  \log(p) & \le \sum_{2 \le k \le \log N} N^{ 1/k } \\
 &  \ll_{\varepsilon} N^{ 1/2 +\varepsilon},
\end{align*}
and consequently
$$
\mathcal{R}_{a',q'}(N) = \mathfrak{S}_{a',q'}(N)N  +  O_{C_1,C_2}  \left ( N \mathcal{L}^{-C_1-C_2-1}  \right ).
$$

If $\mathfrak{S}_{a',q'}(N)=0$ then there exists $p^2 \mid q'$ such that $p^2 \mid (N-a)$ by our remark after Theorem~\ref{T1}. Hence $\mathcal{R}_{a',q'}(N)=0$ and the result follows immediately. Otherwise if $\mathfrak{S}_{a',q'}(N) \neq 0$ then we bound from below
\begin{align*}
 \mathfrak{S}_{a',q'}(N) &  \gg \frac{1}{\varphi(q')} \prod_{ \substack{p \mid\!\mid q'  }} \left (1-\frac{1}{p+1} \right ) \\
 & \ge \frac{1}{\varphi(q')} \exp \left\{ \sum_{p \le q'} \log \left  (1- \frac{1}{p+1} \right ) \right \}.
 \end{align*}
 Applying a Taylor series expansion for the logarithm
 $$
 \log(1-x) = - \sum_{n=1}^{\infty} \frac{x^n}{n},
 $$
 valid for $|x| <1$, we obtain
 \begin{align*}
  \mathfrak{S}_{a',q'}(N) & \ge \frac{1}{\varphi(q')} \exp \left (-\sum_{p\le q'} \frac{1}{p} - \sum_{p \le q'} \sum_{n=2}^{\infty} \frac{1}{n(p+1)^n} \right )  \\
 & = \frac{1}{\varphi(q')} \exp \{ - \log \log q' +O(1) \}  \\
 & \gg_{C_1} \mathcal{L}^{-C_1-1}.
\end{align*}
Therefore 
$$
N \mathcal{L}^{-C_1-C_2-1} \ll_{C_1, C_2} \mathfrak{S}_{a',q'}(N) N \mathcal{L}^{-C_2} 
$$
and so
$$
\mathcal{R}_{a',q'}(N) =  \mathfrak{S}_{a',q'}(N) N  \left  \{1 +O_{C_1, C_2} \left  (\mathcal{L}^{-C_2} \right )   \right \}.
$$

\section*{Acknowledgement}
The author thanks I.~E.~Shparlinski and L.~Zhao for many helpful comments and discussions, and O.~Ramar\'e for helpful discussions. The author also thanks the referee for many helpful comments and suggestions. This work is supported by an Australian Government Research Training Program (RTP) Scholarship, UNSW Science PhD Writing Scholarship, and the Lift-off Fellowship of the Australian Mathematical Society.


\begin{thebibliography}{999}
\bibitem{B1} V. Brun, {\it Le crible d'Eratosthene et le th\'eor\`eme de Goldbach.\/} Skr. Norske Vid.-Akad. Kristiana (1920), no. 3, 36.

\bibitem{BP} J. Br\"udern, A. Perelli, {\it Exponential sums of additive problems involving square-free numbers.} Ann. Scuola Norm. Sup. Pisa Cl. Sci. (4) {\bf 28} (4) (1999), 591-613.

\bibitem{C1} J.-R. Chen, {\it On the representation of a larger even integer as the sum of a prime and the product of at most two primes.} Sci. Sinica {\bf 16} (1973), 157--176.

\bibitem{D1} A. W. Dudek, {\it Explicit estimates in the theory of prime numbers.} Ph.D. thesis, The Australian National University, 2016. 

\bibitem{D2} A. W. Dudek, {\it On the sum of a prime and a square-free number.\/} Ramanujan J. {\bf 42} (1) (2017), 233--240. 

\bibitem{E1} T. Estermann, {\it On the representations of a number as the sum of a prime and a quadratfrei number.\/} J. Lond. Math. Soc. {\bf 6} (3) (1931), 219--221.

\bibitem{E2} T. Estermann, {\it On the representations of a number as the sum of two numbers not divisible by $k$-th powers.\/} J. Lond. Math. Soc. {\bf 6} (1) (1931), 37--40.

\bibitem{EL} C. J. A. Evelyn, E. H. Linfoot, {\it On a problem in the additive theory of numbers.\/} Math. Z. {\bf 34} (1) (1932), 637--644.

\bibitem{FI1}
J. Friedlander, H. Iwaniec, {\it Opera de cribro.\/}
AMS Colloquium Publications, {\bf 57}.  American Math. Soc., Providence, RI., 2010.

\bibitem{HB1} D. R. Heath-Brown, {\it The ternary Goldbach problem.\/} Rev. Mat. Iberoamericana {\bf 1} (1) (1985), 45--59. 

\bibitem{HBP1} D. R. Heath-Brown, J.-C. Puchta, {\it Integers represented as a sum of primes and powers of two.\/} Asian J. Math. {\bf 6} (3) (2002), 535--565.

\bibitem{H1} H. A. Helfgott, {\it The ternary Goldbach conjecture is true.} Preprint, 2014 (availiable from \url{https://arxiv.org/abs/1312.7748}).

\bibitem{IK1} H. Iwaniec, E. Kowalski,  {\it Analytic number theory,\/} Amer. Math. Soc. Colloq. Publ. {\bf 53} (2004).

\bibitem{JJ1} G. A. Jones, J. M. Jones, {\it Elementary number theory.\/} Springer Undergraduate Mathematics Series. Springer-Verlag London, 1998.

\bibitem{L2} Yu. V. Linnik, {\it Addition of primes numbers with powers of one and the same number.\/} Mat. Sbornik N.S. {\bf 32} (74), (1953), 3--60.

\bibitem{L1} Yu. V. Linnik, {\it The dispersion method in binary additive problems.} American Mathematical Society, RI 1963.

\bibitem{LL1} Z. Liu, G. L\"u, {\it Density of two squares of primes and powers of 2.\/} Int. J. Number Theory {\bf 7} (5) (2011), 1317--1329.

\bibitem{M1} L. Mirsky, {\it The number of representations of an integer as the sum of a prime and a $k$-free integer.\/} Amer. Math. Monthly {\bf 56} (1949), 17--19.

\bibitem{MV1} H. L. Montgomery, R. C. Vaughan, {\it Multiplicative number theory. $I$. Classical theory.} Cambridge Studies in Advanced Mathematics, 97. Cambridge University Press, Cambridge, 2007.

\bibitem{P1} A. Page, {\it On the number of primes in an arithmetic progression.\/} Proc. London Math. Soc. (2) {\bf 39} (2) (1935), 116--141.

\bibitem{R1} O. Ramar\'e, {\it Arithmetical aspects of the large sieve inequality.\/} Harish-Chandra Research Institute Lecture Notes 1, Hindustan Book Agency 2009.

\bibitem{V2} R. C. Vaughan, {\it The Hardy-Littlewood method.} Second edition. Cambridge Tracts in Mathematics, 125. Cambridge University Press, Cambridge, 1997.  

\bibitem{V1} I. M. Vinogradov, {\it Representation of an odd number as a sum of three primes.\/} Dokl. Akad. Nauk SSR {\bf 15} (1937), 291--294.

\end{thebibliography}
\end{document}